\documentclass[openany, amssymb, psamsfonts]{amsart}
\usepackage{amssymb}
\usepackage{mathrsfs,comment}
\usepackage{stmaryrd}
\usepackage{tikz-cd}
\usepackage[normalem]{ulem}
\usepackage{url}
\usepackage{mathtools}
\usepackage{tikz}
\usetikzlibrary{calc,arrows}
\usepackage[all,arc,2cell]{xy}
\UseAllTwocells
\usepackage{enumitem}
\usepackage{hyperref} 
\usepackage[utf8]{inputenc}
\usepackage[T1]{fontenc}
\hypersetup{%
	bookmarksnumbered=true,%
	bookmarks=true,%
	colorlinks=true,%
	linkcolor=blue,%
	citecolor=blue,%
	filecolor=blue,%
	menucolor=blue,%
	pagecolor=blue,%
	urlcolor=blue,%
	pdfnewwindow=true,%
	pdfstartview=FitBH}

\def\makeautorefname#1#2{\expandafter\def\csname#1autorefname\endcsname{#2}}
\def\equationautorefname~#1\null{(#1)\null}
\makeautorefname{footnote}{footnote}%
\makeautorefname{item}{item}%
\makeautorefname{figure}{Figure}%
\makeautorefname{table}{Table}%
\makeautorefname{part}{Part}%
\makeautorefname{appendix}{Appendix}%
\makeautorefname{chapter}{Chapter}%
\makeautorefname{section}{Section}%
\makeautorefname{subsection}{Section}%
\makeautorefname{subsubsection}{Section}%
\makeautorefname{theorem}{Theorem}%
\makeautorefname{thm}{Theorem}%
\makeautorefname{cor}{Corollary}%
\makeautorefname{lem}{Lemma}%
\makeautorefname{prop}{Proposition}%
\makeautorefname{pro}{Property}
\makeautorefname{conj}{Conjecture}%
\makeautorefname{defn}{Definition}%
\makeautorefname{notn}{Notation}
\makeautorefname{notns}{Notations}
\makeautorefname{rem}{Remark}%
\makeautorefname{quest}{Question}%
\makeautorefname{exmp}{Example}%
\makeautorefname{ax}{Axiom}%
\makeautorefname{claim}{Claim}%
\makeautorefname{ass}{Assumption}%
\makeautorefname{asss}{Assumptions}%
\makeautorefname{con}{Construction}%
\makeautorefname{prob}{Problem}%
\makeautorefname{warn}{Warning}%
\makeautorefname{obs}{Observation}%
\makeautorefname{conv}{Convention}%

\newtheorem{thm}{Theorem}[section]

\newtheorem{prop}{Proposition}[section]
\newtheorem{lem}{Lemma}[section]

\newtheorem*{thm*}{Theorem}

\theoremstyle{definition}
\newtheorem{defn}{Definition}[section]

\newtheorem{rem}{Remark}[section]

\makeatletter
\let\c@obs=\c@thm
\let\c@cor=\c@thm
\let\c@prop=\c@thm
\let\c@lem=\c@thm
\let\c@prob=\c@thm
\let\c@con=\c@thm
\let\c@conj=\c@thm
\let\c@defn=\c@thm
\let\c@notn=\c@thm
\let\c@notns=\c@thm
\let\c@exmp=\c@thm
\let\c@ax=\c@thm
\let\c@pro=\c@thm
\let\c@ass=\c@thm
\let\c@warn=\c@thm
\let\c@rem=\c@thm
\let\c@sch=\c@thm
\let\c@equation\c@thm
\numberwithin{equation}{section}
\makeatother

\DeclareMathOperator{\SL}{SL}

\DeclareMathOperator{\Sp}{Sp}
\DeclareMathOperator{\SO}{SO}
\DeclareMathOperator{\SU}{SU}

\DeclareMathOperator{\Hess}{Hess}
\DeclareMathOperator{\Hom}{Hom}

\DeclareMathOperator{\ad}{ad}

\DeclareMathOperator{\tr}{tr}

\DeclareMathOperator{\grad}{grad}
\DeclareMathOperator{\vol}{vol}

\DeclareMathOperator{\supp}{supp}

\DeclareMathOperator{\FA}{FA}

\DeclareMathOperator{\ff}{\mathfrak{f}}

\newcommand{\bs}{\backslash}

\newcommand{\fa}{\mathfrak{a}}

\newcommand{\rank}{\text{rank}}
\renewcommand{\setminus}{-} 
\makeatletter
\newtheorem*{rep@theorem}{\rep@title}
\newcommand{\newreptheorem}[2]{%
	\newenvironment{rep#1}[1]{%
		\def\rep@title{#2 \ref{##1}}%
		\begin{rep@theorem}}%
		{\end{rep@theorem}}}
\makeatother

\newreptheorem{theorem}{Theorem}
\newreptheorem{cor}{Corollary}

\title{Minimal submanifolds and waists of locally symmetric spaces}

\author{Mikołaj Frączyk}
\address{Faculty of Mathematics and Computer Science, Jagiellonian University, ul. Łojasiewicza 6, 30-348 Krak{\'o}w, Poland}
\email{mikolaj.fraczyk@uj.edu.pl}

\author{Ben Lowe}
\address{Department of Mathematics\\University of Chicago\\Chicago, IL 60637}
\email{loweb24@uchicago.edu}

\date{\today}

\begin{document}
	\raggedbottom
	\maketitle
	
	\begin{abstract} 
    
    We show that compact locally symmetric manifolds $M$ with universal cover the symmetric space $X$ for  $\SL(n,\mathbb{R})$ form a topological higher $d$-expander family for $d\leq  n/8$.  We prove the same statement for $\SL(n,\mathbb{R})$ replaced by a split simple non-compact real Lie group $G$ and for $d$ linear in the rank of $G$. We accomplish this by showing that minimal submanifolds of low codimension in such $M$ must have volume comparable to the volume of $M$. Our proof is based on a new monotonicity formula for minimal submanifolds of $X$, together with bounds on the decay of matrix coefficients for unitary representations of higher rank Lie groups. We also give the first locally symmetric example of power-law systolic freedom.  This paper partially supersedes \cite{fl24}.

	\end{abstract} 
	
	\tableofcontents
	\section{Introduction}
	
	Expander graphs have been widely useful in combinatorics,  number theory, group theory, and many other branches of mathematics and computer science \cite{lubotzkyEXP}. They have a natural Riemannian analogue in the form of manifolds with a uniform spectral gap for the Laplace-Beltrami operator. A family of compact Riemannian manifolds $M_n$ with a uniform spectral gap is called an \textit{expander family}, and has the following property: there is a uniform $\lambda>0$ so that for every $M_n$ and every continuous map $f:M_n \rightarrow \mathbb{R}$ with rectifiable fibers, there is some $p \in \mathbb{R}$ so that 
	\begin{equation}\label{eq-topexpansion}
		\vol (f^{-1}(p)) \geq \lambda \vol(M_n).  
	\end{equation}
	These types of estimates are called \emph{waist inequalities}. In a series of works \cite{gr03, gr09,gromov2010pt2} Gromov initiated a program to prove statements of the following kind.  Suppose we are given two simplicial complexes, $X$ and $Y$, where $X$ is ``complicated" and $Y$ is lower dimensional. Then, any map $f\colon X\to Y$ must have at least one ``complicated" fiber $f^{-1}(y), y\in Y$. Historically volume was the first measure of complexity considered and it is the one we consider in this paper, but it is just one of many possible choices. 

\begin{defn} \label{defn-expansion}Let $X,Y$ be  Riemannian manifolds (or simplicial complexes) of dimensions $n,d$ respectively. Let $f\colon X\to Y$ be a continuous map. The \emph{waist} of $f$ is 
 $w(f):=\sup_{y\in Y} \mathcal H^{n-d}(f^{-1}(y))$, where $\mathcal H^{n-d}$ stands for the $(n-d)$-dimensional Hausdorff measure.The \emph{$d$-waist} of $X$ is then defined as $w_d(X):=\inf_{f\colon X\to \mathbb R^d} w(f)$, where the infimum is taken over a suitable class of maps with $(n-d)$-rectifiable fibers.  In this paper, we take the above infimum over a class of generic maps that we call generalized piecewise linear (see Section \ref{sec-PL} for the definition.)  
\end{defn}

\begin{rem}
A condition on the maps in question in the form of a genericity or real-analyticity assumption is present in all waist inequalities we are aware of in the literature (e.g., \cite{gr03}, \cite{gr09}, \cite{gromov2010pt2}, \cite{memarian2012lower},\cite{freedmanwidths},\cite{bader2024}), but it may only be necessary to require the maps to have rectifiable fibers of the proper dimension.  

\end{rem}

The families of spaces $\{ X_i\}_{i\in I}$ satisfying $w_d(X_i)\geq c \vol(X_i)$ are called \emph{ topological $d$-expanders}.  In what follows we refer to such families as \emph{$d$-expander families} for short. Gromov asked for examples of natural families of bounded geometry CW-complexes or Riemannian manifolds that are ``higher expanders", that is, $d$-expanders for $d\geq 2$.  The topological expansion property subsequently motivated other definitions of higher expansion like the cosystolic or coboundary expanders \cite{LubotzkyHDX, GotlKaufKHDX}. Gromov's question was answered by Evra-Kaufman \cite{EK24}, who exhibited bounded degree families  of $d$-expander simplicial complexes. In the present work, we show that natural  bounded geometry $d$-expander families can already be found among the higher rank locally symmetric spaces. 

\begin{thm}
For any $d> 0$ there exists an $N_0 \in \mathbb{Z}_+$ such that the following holds. If $n>N_0$, then the family of all compact quotients of the symmetric space for $SL(n,\mathbb{R})$ forms a $d$-expander family.

More generally, let $X= G/K$ be the symmetric space for a split simple higher rank non-compact real Lie group $G$.  Then if the rank of $G$ is larger than $N_0$, the family of compact quotients of $X$ form a $d$-expander family.  
\end{thm}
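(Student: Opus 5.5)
The plan has two parts. First, reduce the waist inequality to a statement about minimal submanifolds. Then prove that statement by combining a monotonicity formula on $X$ with decay of matrix coefficients.

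\textbf{Step 1: reduction to a linear isoperimetric inequality.} Let $M=\Gamma\backslash X$ be compact and $f\colon M\to\mathbb{R}^d$ generalized piecewise linear. I would use a Gromov/Guth-style sweepout argument. The fibers $f^{-1}(y)$ form a family of $(N-d)$-cycles, $N=\dim X$, and this family detects a nontrivial class in the Almgren–Pitts sense, namely the fundamental class of $M$ pushed through the Almgren isomorphism. Suppose every fiber had volume $\le \epsilon\vol(M)$. Triangulate $\mathbb{R}^d$ finely. Using a filling (coning) inequality one dimension at a time, this would construct a contraction of the family, contradicting nontriviality.

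What that contraction requires is a filling inequality of linear type on $M$: every $k$-cycle $Z$ in $M$, for $k\ge N-d$, with $\mathrm{vol}(Z)\le\epsilon\vol(M)$ bounds a chain $C$ with $\mathrm{vol}(C)\le A\,\mathrm{vol}(Z)$, where $A$ depends only on $X$.

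To get this, minimize: take $C$ area-minimizing with boundary $Z$, or alternatively run min-max to produce a stationary integral varifold. Either way the question becomes a lower bound on minimal objects. The target statement is: \emph{every closed minimal (stationary) $k$-dimensional submanifold/varifold $\Sigma\subset M$ with $k\ge N-d$ has $\mathrm{vol}(\Sigma)\ge c\vol(M)$.} Min-max applied to the sweepout then gives $w_d(M)\ge c\vol(M)$ directly, so one could even skip the filling argument.

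\textbf{Step 2: monotonicity in $X$.} Lift $\Sigma$ to $\tilde\Sigma\subset X$. I would prove a monotonicity formula of the form
\[
\frac{\mathrm{vol}(\tilde\Sigma\cap B(x,r))}{V_k(r)}\ \text{nondecreasing},\qquad V_k(r)\approx e^{h_k r}.
\]
The exponent $h_k$ comes from the Hessian of the distance function $\Hess\, d = \sum_\alpha |\alpha(H)|\coth(|\alpha(H)|r)$ on root directions. The minimal trace of this Hessian over $k$-planes gives $h_k$, which is roughly the sum of the smallest $k$ values of $|\alpha(H)|$ over the positive roots.

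The key computation is for split $G$, e.g.\ $\SL(n,\mathbb{R})$: in codimension at most $d$, with $d$ small relative to the rank, $h_k$ is close to the full volume entropy $2\rho(H)$. More precisely, I want $h_k\ge 2\rho(H)(1-\delta)$ with $\delta=O(d/n)$ uniformly in the unit direction $H$.

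This yields $\mathrm{vol}(\tilde\Sigma\cap B(x,r))\gtrsim e^{(1-\delta)2\rho r}$ for every $x\in\tilde\Sigma$.

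\textbf{Step 3: counting via matrix coefficients.} Compare this growth with how $\Sigma$ can sit in $M$. Consider the function on $M$ given by the convolution of the measure $\mu_\Sigma$ with the normalized indicator of a ball of radius $r$. Pair it against itself, decomposing $L^2(M)$ into the constants plus $L^2_0(M)$. Since $G$ has higher rank, property (T) gives uniform decay of $K$-finite matrix coefficients, bounded by $\Xi(g)^{1/p}$ for a fixed $p$. For split $G$ of large rank, Oh-type bounds give decay close to $e^{-\rho r}$, and the power $p$ can be controlled.

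The resulting bound is
\[
\mu_\Sigma(B_r*\mu_\Sigma)\le \frac{\mathrm{vol}(\Sigma)^2}{\vol(M)}\,|B_r| + \mathrm{vol}(\Sigma)\,|B_r|\,e^{-(1-\eta)\rho r}\cdot(\text{thickening error}).
\]
Monotonicity gives the lower bound $\mathrm{vol}(\Sigma)\,e^{(1-\delta)2\rho r}$ for the left side. Choose $r$ large with $\delta,\eta$ small, which is possible once $\mathrm{rank}>N_0(d)$. The second term is then smaller than the lower bound, which forces $\mathrm{vol}(\Sigma)\ge c\vol(M)$.

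\textbf{Main obstacle.} I expect the hard step to be making Steps 2 and 3 compatible. The monotonicity exponent has to beat the matrix-coefficient decay. This demands a sharp, direction-uniform lower bound on the $k$-plane Hessian trace for $k$ near $N$, where singular directions $H$ are the danger. It also demands decay exponents close to optimal for split groups of large rank. I would first try the $\SL(n)$ root computation explicitly, then extend it to the other split types via their root systems.
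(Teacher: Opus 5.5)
Your Step 1, in its min-max form, matches the paper; the filling-inequality detour is unnecessary. The skeleton of your Step 3 also matches: thicken $\Sigma$, pair it with itself, split off the constants, and bound the $L^2_0$ part by decay of spherical matrix coefficients. The gap is in Step 2, and a wrong decay input in Step 3 makes it worse.

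The first-variation/Gr\"onwall argument only gives growth at a rate $\kappa$ that bounds the $k$-plane Hessian trace from below at \emph{every} point. That means uniformly in the direction $\hat H=H/\|H\|\in\mathfrak a^+$.

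For the Riemannian distance, the root eigenvalues are $\alpha(\hat H)\coth(\alpha(H))$, so the trace tends to $2\rho(\hat H)$. This is far from uniform. Along a fundamental coweight of $\SL(n,\mathbb R)$, only the roots $e_1-e_j$ are nonzero and $2\rho(\hat H)=\sqrt{n(n-1)}$, whereas $2\|\rho\|=\sqrt{n(n^2-1)/3}$. So your direction-dependent bound $h_k\geq(1-\delta)2\rho(\hat H)$ does not yield $\mathrm{vol}(\tilde\Sigma\cap B(x,r))\gtrsim e^{(1-\delta)2\|\rho\|r}$. The distance function only gives a rate of about $n$, which for large $n$ is below even $\|\rho\|$. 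Your comparison would therefore fail even if $L^2_0(\Gamma\backslash G)$ were tempered.

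Moreover, Oh's theorem does not give decay close to $e^{-\rho(H)}$. It gives $e^{-(1-\delta)\theta(H)}$, where $\theta$ is the half-sum of a maximal strongly orthogonal system, and $\theta(H)\leq\rho(H)/\lceil n/2\rceil$. The decay cannot be pulled out of the ball integral as a factor $e^{-\rho r}$; you must integrate $e^{(2\rho-t\theta)(H)}$ over the ball. Once you do, the saving over the volume growth is only of order $\langle\rho,\theta\rangle/\|\rho\|\sim\sqrt n$, against a growth rate $\sim n^{3/2}$. So the monotonicity exponent must match the growth rate of the balls uniformly, up to $O(\sqrt n)$.

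The missing idea is the choice of test function. The paper replaces $d(x,\cdot)$ by a smoothed Weyl-invariant polyhedral gauge,
\[
F_\epsilon(ke^HK)=\epsilon\log\sum_{w\in W}e^{\langle w\lambda,H\rangle/\epsilon},\qquad \lambda=\rho-\tfrac t2\theta,
\]
with $\lambda=\rho$ for general split $G$.

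\begin{itemize}
\item Away from the walls, $\nabla f_\epsilon\approx\lambda$. So each root eigenvalue is at least about $\langle\lambda,\alpha\rangle$, and they sum to $2\langle\rho,\lambda\rangle$ regardless of direction.
\item Near walls, the softmax makes the Hessian huge in the directions of the nearby walls and on the roots spanned by them.
\item A combinatorial lemma shows that only Weyl elements supported on those nearby walls carry non-negligible weight. Hence the remaining root eigenvalues still sum to about $2\langle\rho,\lambda\rangle$.
\end{itemize}

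This gives the uniform codimension-$d$ bound $2\langle\rho,\lambda\rangle-d(n-1-\frac t2)-\eta$. The choice of $\lambda$ then makes the mean-zero term grow exactly like $\int_{\lambda(H)\leq T}e^{2\lambda(H)}\,dH$. Everything reduces to
\[
d\left(n-1-\tfrac t2\right)<t\langle\rho,\theta\rangle-\tfrac{t^2}{2}\|\theta\|^2,
\]
which yields $d\leq n/8$.
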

 
We believe our work presents a range of new possibilities for applications of minimal surface techniques to Gromov's program for non-positively curved locally symmetric spaces more generally, as well as to low dimensional actions of lattices in semisimple Lie groups. The interaction between representation theory and minimal surfaces is at the heart of our approach\footnote{Recent work by Song on  minimal surfaces in high dimensional spheres \cite{song2024random} also used representation theory, although in a completely different way.}. Our work draws on the min-max theory of minimal surfaces (via a connection to waists), which has seen a revival of activity (see \cite{marques2013applications} for a survey.) We hope there can be more applications of minimal surface theory to the study of the geometry of locally symmetric spaces and higher expansion.  

\subsection{Waist inequalities.}

For each $n$, let $X = G/K$   be the symmetric space for a split simple noncompact real Lie group $G$, and let $\{M_k\}$ be an enumeration of the compact Riemannian manifolds with universal cover $X$.  All manifolds in this paper are understood to be complete (i.e., without boundary), unless otherwise specified.  
	
	\begin{thm} \label{mt-higherrank}

There is a constant $c>0$ so that the $\{M_k\}$ form a higher $d$-expander family for $d < c \left( \rank(G)\right) $.  In the case that $G=SL(n,\mathbb{R})$, the $M_k$ form a higher $d$-expander family for $d \leq \lfloor n / 8 \rfloor$.

	\end{thm}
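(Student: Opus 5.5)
The plan follows the strategy announced in the abstract: first reduce the waist inequality to a lower bound on the volume of minimal submanifolds of codimension $d$ in $M_k$, then prove that lower bound.

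\textbf{Step 1 (from waists to minimal submanifolds).} Let $f\colon M_k\to\mathbb{R}^d$ be a generalized piecewise linear map. Suppose every fiber has $(n-d)$-volume less than $\lambda\vol(M_k)$. Taking families of fibers, indexed by a suitable parameter space, gives a nontrivial sweepout of $M_k$ by $(n-d)$-cycles. By Almgren--Pitts min-max theory in codimension $d$ (via Gromov's relation between waists and widths), the $d$-width is then realized by a stationary integral varifold $V$ with $\|V\|(M_k)\le w(f)$. It therefore suffices to show that every stationary integral $(n-d)$-varifold $V$ in $M_k$ satisfies $\|V\|(M_k)\ge c\vol(M_k)$.

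\textbf{Step 2 (monotonicity in $X$).} Lift $V$ to a $\Gamma$-invariant stationary varifold $\tilde V$ in $X$. We need a monotonicity formula adapted to the symmetric space. Use a distance-like function built from $K$-invariant functions on the flat; a natural choice is the Busemann-type potential $\rho(x)=\log\|x\|$ for $x=gK$ in $\SL(n,\mathbb{R})$. For an $(n-d)$-plane, the trace of its Hessian restricted to the plane should be bounded below by a positive multiple of the exponential growth rate, provided $d$ is small compared with the rank. This yields
\[
\|\tilde V\|(B_R(x))\ge c\, e^{\delta R}\quad\text{for } x\in\supp\tilde V,
\]
where $\delta$ is comparable to a definite fraction of $2\rho$, with $\rho$ the half-sum of positive roots. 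The constraint $d\le n/8$ will come from requiring that the Hessian of this potential have large enough trace on every $(n-d)$-dimensional subspace.

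\textbf{Step 3 (matrix coefficients).} The varifold $\tilde V$ projects to a measure $\mu$ on $M_k$. Write $\Phi=\mathbf 1_{B_R}$. Compare $\mu*\Phi$, i.e.\ the averaging operator on $L^2(\Gamma\backslash G)$, with its mean. Uniform decay of matrix coefficients for higher rank $G$ (Oh's bounds, property (T)) shows that on the orthogonal complement of constants the averaging operator has norm at most $e^{-(1-\epsilon)\cdot\frac{1}{2}\cdot 2\rho R}$-ish times the volume of $B_R$. If $\mu(M_k)$ were much smaller than $\vol(M_k)$, then $\mu$ convolved with $B_R$ would be concentrated. However, Step 2 forces each point of $\supp\mu$ to carry mass $e^{\delta R}$ in its $R$-ball. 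Taking the $L^2$ pairing $\langle \mu*\Phi,\mu\rangle$ and splitting it into the constant part ($\approx\mu(M)^2\vol B_R/\vol M$) and the remainder, then letting $R\to\infty$, gives $\mu(M)\gtrsim\vol(M)$, provided $\delta$ exceeds the decay exponent. Some care is needed because $\mu$ is singular; smooth $\mu$ at unit scale using bounded geometry.

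\textbf{Main obstacle.} The hardest part is Step 2 together with the exponent comparison. We need a monotonicity exponent $\delta$ for codimension-$d$ minimal submanifolds that beats the matrix coefficient decay rate. This requires a careful computation of the Hessian of the chosen potential on the flat and of its root-space eigenvalues, and this computation is where the linear-in-rank bound ($d\le n/8$ for $\SL_n$) comes from.
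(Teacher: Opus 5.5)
Your Steps 1 and 3 follow the paper's architecture: a min-max reduction to a stationary integral varifold, then smoothing to $S\ast\varphi$ and splitting the spherical coefficient $\Psi$ into a constant part and a mean-zero part controlled by Oh. The gap is in Step 2 and in the exponent bookkeeping, which is where the content of the theorem lies.

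First, Oh's bound (Theorem \ref{thm-decayslnr}) gives decay $e^{-(1-\delta)\theta(H)}$, with $\theta$ the half-sum of a maximal strongly orthogonal system, not $e^{-\rho(H)}$. For $\SL(n,\mathbb R)$ one has $\theta(H)\le\rho(H)/\lceil n/2\rceil$ on $\mathfrak a^+$. So after integrating against the Jacobian $\approx e^{2\rho(H)}$, the mean-zero term is bounded only by an exponential whose rate is below the volume growth of the averaging region by a relative $O(1/n)$. Your monotonicity exponent must therefore be within a relative $O(1/n)$ of that volume growth, not merely ``a definite fraction of $2\rho$''. For the same reason you cannot let $R\to\infty$ at the end; you must fix one large $r$.

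Neither potential you suggest achieves this.
\begin{itemize}
\item For $\log\|g\|$, i.e.\ the top Cartan coordinate $H_1$, the gradient is the projection of $e_1$ to $\mathfrak a$. By Proposition \ref{prop-CartanHess}, away from the walls the Hessian lives on the $n-1$ root directions $e_1-e_j$, with entries $\coth(H_1-H_j)\to 1$. So the trace on a codimension-$d$ plane is about $n-1-d$. Yet $\{H_1\le R\}$ has volume $\approx e^{n(n-1)R}$ (maximize $2\rho$ at $H=(R,\dots,R,-(n-1)R)$), and Oh only gives $e^{(n(n-1)-n/2)R}$ for the mean-zero term.
\item For Riemannian balls, the distance function has Hessian trace tending to $2\rho(H)/\|H\|$. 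Along the first fundamental coweight this is $\sqrt{n(n-1)}\approx n$, while $\vol B_R\approx e^{2\|\rho\|R}$ with $2\|\rho\|\approx n^{3/2}/\sqrt3$.
\end{itemize}
Both fail by a factor growing with $n$, already for $d=1$.

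The missing idea is to adapt both the test function and the averaging kernel to $\rho$. The paper takes the Weyl-invariant polyhedral function $\max_{w\in W}\langle w\lambda,a(g)\rangle$ with $\lambda=\rho-\frac t2\theta$, smoothed by the soft-max $f_\epsilon$. Its gradient $\sum_w a_w(x)\,w\lambda$ lies in $\mathfrak a^+$ and is $\approx\lambda$ away from the walls, so the root-direction trace is essentially $2\langle\rho,\lambda\rangle$ everywhere.

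The real work is Proposition \ref{prop-hessianlowerboundhigherrank} and its split analogue: $\tr\nabla^2F_\epsilon|_S\ge 2\langle\rho,\lambda\rangle-d(n-1-\frac t2)-\eta$ on every codimension-$d$ subspace, uniformly up to the walls. This uses three ingredients: the huge Hessian along $\mathfrak a_x$ when $a_{s_\alpha}(x)\ge\epsilon^{1/2}$; the lemma that $a_w(x)>\epsilon^{1/2}$ forces $\supp w\subset\Delta_x$; and averaging of the gradient over blocks.

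The kernel is then $\chi_r(F_\epsilon/\|\lambda\|)$, i.e.\ sublevel sets of the same function rather than $B_R$. Since $2\rho-t\theta=2\lambda$ and $\lambda(H)\le(r+1)\|\lambda\|$ on the support, the mean-zero term is $\lesssim(1+r)^{n-1}e^{2\|\lambda\|r}$. The required inequality $\bigl(2\langle\rho,\lambda\rangle-d(n-1-\frac t2)-\eta\bigr)/\|\lambda\|>2\|\lambda\|$ becomes $d(n-1-\frac t2)+\eta<t\langle\rho,\theta\rangle-\frac{t^2}{2}\|\theta\|^2$. Since $\langle\rho,\theta\rangle-\frac12\|\theta\|^2=n(n-1)/8$, this is exactly what gives $d\le n/8$. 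For general split $G$, the paper instead uses $\lambda=\rho$ together with $\theta\ge\rho/(3\rank G)$ on $\mathfrak a^+$ to get a range linear in the rank.
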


Bader-Sauer conjecture that the statement of the above theorem remains true for $d \leq \rank (G)$ \cite{bader2025higher}.  Our next theorem\footnote{This was the main result of a previous version of this manuscript.} provides a natural class of $2$-expander manifolds in the rank one setting, answering  the Riemannian version of Gromov's question in the $d=2$ case. 
\begin{thm}\label{mt-waist} There exists a constant $c>0$ such that any compact octonionic hyperbolic manifold $M$ satisfies $w_2(M)\geq c \vol(M).$ That is, compact octonionic hyperbolic manifolds form a $2$-expander family. 
\end{thm}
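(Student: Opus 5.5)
We prove \autoref{mt-waist} in two stages. First we reduce the waist inequality to a lower bound on the volume of minimal submanifolds of codimension $2$. Then we prove that lower bound with a monotonicity argument combined with decay of matrix coefficients for $G=F_{4(-20)}$.

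\emph{Reduction to minimal submanifolds.} Let $f\colon M\to\mathbb R^2$ be a generalized piecewise linear map. Its fibers sweep out $M$, giving a $2$-parameter family of codimension-$2$ cycles that detects the fundamental class. Almgren--Pitts min-max theory, in the Almgren isomorphism form used by Guth and Gromov, then produces a stationary integral varifold $V$ of dimension $14$ in $M$ ($\dim M=16$) with $\|V\|(M)\le w(f)$ up to a dimensional constant. So it suffices to show that every stationary integral $14$-varifold $V$ in a compact octonionic hyperbolic $M$ satisfies $\|V\|(M)\ge c\,\vol(M)$. Because we only need mass bounds, regularity of $V$ will not be needed, only stationarity.

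\emph{Lower bound via the monotonicity formula.} Lift $V$ to a $\Gamma$-invariant stationary varifold $\tilde V$ in $X=\mathbb{O}H^2$. I would use a calibration-type or vector-field argument.
\begin{enumerate}
\item Choose a vector field $Y$ on $X$, for instance $Y=\nabla\phi$ with $\phi$ built from distance to a point, or more cleverly from Busemann functions.
\item Compute its $14$-dimensional divergence $\operatorname{div}_P Y$ over $14$-planes $P$.
\item Show that this divergence is bounded below by a positive multiple of the volume-growth rate, uniformly in $P$.
\end{enumerate}
In $\mathbb{O}H^2$ the Hessian of the distance function $r$ has eigenvalues $\coth r$ on a $7$-dimensional block and $2\coth 2r$ on the remaining $7$ directions. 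With $\rho=11$ (up to normalization), this gives a monotonicity formula of the form
\[\frac{d}{dR}\Big(e^{-\alpha R}\,\|\tilde V\|(B(x,R))\Big)\ge 0\]
for an exponent $\alpha<2\rho=22$, since a $14$-plane loses at most two directions. Comparing with the volume growth $e^{22R}$ of balls, we get that $\|\tilde V\|(B(x,R))\gtrsim e^{\alpha R}$ for every $x$ in the support.

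\emph{Passing to a global bound.} Averaging over $x$ alone is not enough, because $\alpha<22$ leaves an exponential deficit. To close the gap I would use the uniform spectral gap of $F_{4(-20)}$, which has property (T) with explicit matrix coefficient decay of rate $e^{-\beta r}$, with $\beta$ roughly $2\rho-\text{(Kostant bound)}$. Define the measure $\mu=\|V\|$ on $M$ and consider the function $F(x)=\|\tilde V\|(B(x,R))$, which is the convolution of $\mu$ with the indicator of a ball. By the decay of spherical functions,
\[\|F-\tfrac{\mu(M)}{\vol M}\vol B_R\|_{2}\le e^{-\beta R}\vol(B_R)\,\|\mu\|.\]
The measure $\mu$ is singular, so before applying this I would first smooth it at scale $1$. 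On the support of $\mu$ the monotonicity bound gives $F\ge e^{\alpha R}$. If $\mu(M)\ll\vol(M)$, the average of $F$ is small, namely $\frac{\mu(M)}{\vol M}e^{22R}$, while $F$ is large on a $1$-neighborhood of the support, whose volume is at least comparable to $\mu(M)$ by monotonicity at scale $1$. Then Chebyshev's inequality contradicts the $L^2$ bound once $\alpha+\beta>22$. Choosing $R$ of order $\log(\vol M/\mu(M))$ yields $\mu(M)\ge c\vol M$.

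\emph{Main obstacle.} The key numerical point is the inequality $\alpha+\beta>2\rho$. It requires the sharpest monotonicity exponent for codimension-$2$ minimal submanifolds in $\mathbb{O}H^2$, together with Kostant's bound, which says the trivial representation is isolated for $F_{4(-20)}$ with spherical functions decaying like $e^{-(\rho-\rho')r}$ relative to $e^{-\rho r}$. Getting the optimal $\alpha$ likely needs a vector field finer than $\nabla r^2$, exploiting the fact that a $14$-plane must contain most of the slowly expanding $7$-dimensional block. This is exactly where the exceptional geometry, as opposed to real or complex hyperbolic geometry where codimension-$2$ totally geodesic submanifolds violate the inequality, is used. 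I expect this step to be the main difficulty.
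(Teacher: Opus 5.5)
Your architecture is the paper's. Min-max reduces the waist bound to a mass bound for a stationary integral $14$-varifold; you also need positivity of the width (Almgren) to know this varifold is nonzero. An exponential monotonicity formula is then played against Kostant's decay bound. But the decisive inequality is never verified, and the data you feed into it are wrong.

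\textbf{The monotonicity exponent.} Normalize $\|\alpha\|=1$. The Hessian of the distance to a point has eigenvalues $0$ (radial), $\coth r\ge 1$ with multiplicity $m_\alpha=8$ (not $7$; your $2\rho=22$ already uses $8$), and $2\coth 2r\ge 2$ with multiplicity $m_{2\alpha}=7$. A $14$-plane misses only a $2$-dimensional complement, so its trace is at least $8\coth r+14\coth 2r-4\coth 2r\ge 18$. Hence the plain field $\chi_r(\mathfrak f)\nabla\mathfrak f$, with $\mathfrak f$ a smoothing of the distance (the paper uses $\frac{1}{2\|\alpha\|}\log(2\cosh 2\alpha(a(g)))$), already gives growth $e^{18r}$ via Lemma~\ref{lem-abstmonotonicity}. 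No finer vector field is needed.

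\textbf{The decay rate.} Kostant gives $\lambda\le 5\alpha$, while $\rho=11\alpha$. Convexity of $\lambda\mapsto\log\varphi_\lambda$ between $\varphi_0$ and $\varphi_\rho\equiv 1$ then yields $|\varphi_\lambda(e^H)|\lesssim e^{-6\alpha(H)}(1+\|H\|)$. So $\beta=\rho-5=6$, not $2\rho$ minus Kostant's bound. Your criterion $\alpha+\beta>22$ becomes $18>16$, which holds; it fails in codimension $3$, where the monotonicity bound drops to $16$. You neither compute nor establish this and instead flag it as the main open difficulty, so as written the proposal does not prove the theorem.

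\textbf{The Chebyshev step.} This step also fails as stated. You assert that the $1$-neighborhood of the support has volume $\gtrsim\mu(M)$ ``by monotonicity at scale $1$''. Your $L^2$ bound also implicitly needs $\|\mu_{\mathrm{sm}}\|_2^2\lesssim\mu(M)$. Monotonicity gives \emph{lower} bounds on the mass of unit balls, which yields the reverse inequality $\vol(N_1)\lesssim\mu(M)$. Both of your claims require an \emph{upper} density bound, which stationary integral varifolds need not satisfy (high multiplicity, many nearby sheets).

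The paper avoids this by testing against $\mu$ itself rather than against Lebesgue measure. Set $\mathfrak u=S\ast\varphi$ and $\Psi(g)=\langle\mathfrak u,\lambda_{\Gamma\bs G}(g)\mathfrak u\rangle$. Monotonicity, applied around every center in $X$ and not only at support points, gives $\int_G\chi_r(\mathfrak f(g))\Psi(g)\,dg\ge c\,e^{18r}\Psi(1)$. Splitting $\mathfrak u$ into its mean and mean-zero parts bounds the same integral above by $C\frac{\mu(M)^2}{\vol(M)}e^{22r}+C(1+r)e^{16r}\Psi(1)$. Both sides scale with $\Psi(1)=\|\mathfrak u\|_2^2$, so no upper density bound enters. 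Fixing one large $r$ and using $\Psi(1)\ge c\,\mu(M)$ from small-scale monotonicity gives $\mu(M)\ge c\vol(M)$ directly, with no need to choose $R$ depending on $\vol(M)/\mu(M)$. In your own framework, the repair is to replace Chebyshev by the pointwise bound $F\gtrsim e^{18R}\mu_{\mathrm{sm}}$, valid on all of $X$, and then compare $L^2$ norms.
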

\begin{rem}
It is well known that if the fundamental group of a Riemannian manifold $M$ has Kazhdan's property $(T)$, then the family of finite covers of $M$ is an expander family. In a recent work, Bader and Sauer \cite{bader2024} show that it is also a $2$-expander family, thus giving another answer to Gromov's question for $d=2$ in the Riemannian setting. Recent work by Zung constructed compact negatively curved 3-manifolds $M$ that are almost 2-expanders: $w_2(M)\gtrapprox \vol(M)^{1-\epsilon}$ for every $\epsilon>0$ \cite{zung2024expansion}.

\end{rem}

\begin{rem}
The first examples of higher expanders came with the breakthrough work of Kaufman, Kazhdan and Lubotzky \cite{kaufman2014ramanujan}, who constructed simplicial $2$-complexes of bounded degrees which were topological $2$-expanders. This was later extended by Evra and Kaufman \cite{EK24} to a construction of bounded geometry $d$-expanders for any $d\geq 2$,  using quite refined combinatorial criteria for higher expansion. In both cases the spaces were derived from quotients of Bruhat-Tits buildings of simple $p$-adic groups. This settled Gromov's question in the setting of bounded geometry simplicial complexes.  Fox-Naor-Gromov-Lafforgue-Pach also studied higher expansion in the simplicial complex setting, in terms of overlap properties of maps of bounded degree hypergraphs to lower dimensional Euclidean spaces \cite{fox2012overlap}.    We also mention work by Abdurrahman-Adve-Giri-Lowe-Zung \cite{abdurrahman2024hyperbolic}  that established some higher expansion properties for families of hyperbolic 3-manifolds.

In future work corresponding to the later sections of the paper \cite{fl24} which this paper partially supersedes, we will apply the techniques from this paper to prove homological vanishing theorems for infinite volume higher rank locally symmetric spaces (see also \cite{cmw23,cmw25}), fixed point theorems for actions of higher rank lattices (see also \cite{bader2023higher},\cite{bader2025higher}, \cite{bader2026fixed}), and to establish non-abelian higher expansion properties for locally symmetric spaces.  
\end{rem}

\subsection{Minimal Submanifolds}
Let $M$ be a Riemannian manifold. A submanifold $S\subset M$ of dimension $m$ is \emph{minimal} if it is a critical point of the $m$-dimensional volume functional. In other words, for every compactly supported differentiable deformation $S_t, t\in [0,\varepsilon)$  with $S_0=S$ we have 
$$\frac{\partial}{\partial t}\mathcal H^m(S_t)|_{t=0}=0,$$ where $\mathcal H^m$ stands for the $m$-dimensional Hausdorff measure \cite[\S 1.2]{simongmt}.  This definition can be extended to stationary varifolds \cite[Chap. 4. \S 2.4]{simongmt}, which are a well behaved and useful generalization of minimal submanifolds. 

Minimal surfaces in hyperbolic $3$-manifolds have been studied and successfully used as a tool to study the ambient geometry \cite{calegari2006shrinkwrapping,gabai1997geometric,  Lackenby2002HeegaardST}. In contrast, the topic of low codimension minimal submanifolds of other locally symmetric spaces remains largely unexplored.

The proofs of Theorems \ref{mt-higherrank} and \ref{mt-waist} are based on  new monotonicity formulas for minimal submanifolds of low codimension in nonpositively curved symmetric spaces.  We use them to show that for $d$ small relative to the ambient dimension, codimension $d$ minimal submanifolds have to be large, of volume comparable to the volume of the ambient space.  More precisely, we prove the following:

\begin{thm}\label{mt-stationary}

\begin{enumerate} 
\item 

There are constants $c_1,c_2>0$ so that the following holds. Let $M$ be a compact Riemannian manifold with universal cover the symmetric space for a split non-compact simple real Lie group $G$.    Then any non-empty codimension $d \leq c_1\rank (G)$ stationary integral rectifiable varifold $S\subset M$ satisfies $\mathcal{H}^{\dim(M)-d}(S)\geq  c_2 \vol(M)$.  If $G=SL(n,\mathbb{R})$ then the same statement holds for $d \leq \lfloor n/8 \rfloor $.  

\item Let $M$ be a compact  octonionic hyperbolic $16$-manifold. There is a constant $c>0$ such that any non-empty stationary integral rectifiable $(16-d)$-varifold $S\subset M$ satisfies $\mathcal{H}^{16-d}(S)\geq c \vol(M)$ for $d=1,2$. 
\end{enumerate} 

\end{thm}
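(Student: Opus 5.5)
The plan is to lift the stationary varifold $S\subset M=\Gamma\backslash X$ to a $\Gamma$-invariant stationary varifold $\tilde S\subset X$, prove a monotonicity formula there that forces exponential volume growth of $\tilde S$ in balls, and convert this growth into a lower bound for the mass of $S$ comparable to $\vol(M)$, using decay of matrix coefficients. The monotonicity formula comes first. Fix $p\in\operatorname{supp}\tilde S$ and let $r=d(p,\cdot)$. Test the first variation formula against the vector field $\nabla(\phi(r))$, so that
\[
0=\int_{\tilde S}\operatorname{div}_{\tilde S}\nabla \phi(r)\,d\|\tilde S\|,
\qquad
\operatorname{div}_{T}\nabla\phi(r)=\phi''(r)\,|\nabla^T r|^2+\phi'(r)\,\tr_T \Hess r .
\]
On $X$, $\Hess r$ at a point with unit radial direction $v$ is given by $\sqrt{-R(\cdot,v,v,\cdot)}\coth(\sqrt{-R}\,r)$ on the orthogonal complement of $v$. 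Its eigenvalues are the root values $|\alpha(v)|$ for $v\in\fa$, each counted with its multiplicity, plus zero eigenvalues along the flat containing $v$. The key lemma is a lower bound, uniform in the unit vector $v$ and the $m$-plane $T$ with $m=\dim X-d$,
\[
\tr_T\Hess r\ \ge\ h+\delta\,|\nabla^T r|^2 \quad\text{(up to $\coth$ corrections near $r=0$)},
\]
with $h>0$ a fixed constant. Since $T$ misses at most $d$ directions of $T_xX$, $\tr_T\Hess r$ is at least the full trace minus the sum of the $d$ largest eigenvalues. So it suffices to show that, for every unit $v\in\fa$, $\sum_{\alpha>0}|\alpha(v)|$ dominates the sum of the $d$ largest $|\alpha(v)|$ by a margin bounded below. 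The full sum is the mean curvature of spheres, equal to $2\rho(v)$ in the limit. For $SL(n,\mathbb R)$, with $v=\mathrm{diag}(v_1,\dots,v_n)$, this is the combinatorial inequality that $\sum_{i<j}|v_i-v_j|$ exceeds the sum of its $d$ largest terms by a fixed margin when $d\le n/8$. I would prove it by ordering the $v_i$ and counting pairs. For general split $G$ I would use root-system combinatorics, giving $d\le c_1\rank(G)$. For octonionic hyperbolic space the eigenvalues are $1$ with multiplicity $8$ and $2$ with multiplicity $7$, so $d=1,2$ can be checked directly.

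Choosing $\phi$ suitably and integrating then gives monotonicity: $e^{-hR}\|\tilde S\|(B(p,R))$ is nondecreasing in $R$. Combined with the small-scale density bound $\|\tilde S\|(B(p,1))\ge c$ from the usual monotonicity at scales below the injectivity radius of $X$ (here $\theta\ge1$ because $S$ is integral), this yields
\[
\|\tilde S\|(B(p,R))\ \ge\ c\,e^{hR}.
\]

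For the globalization step, write $\|\tilde S\|(B(p,R))=\sum_{\gamma\in\Gamma}\|S\|$-mass of the pieces of the fundamental-domain translates. Equivalently, $\|\tilde S\|(B(p,R))=\int_M \mathbf 1_{B_R}$ averaged, which expresses it as a pairing $\langle \mu_S, A_R\,\delta_p\rangle$ on $L^2(\Gamma\backslash G)$, where $A_R$ is the ball-averaging operator. Decompose $\mu_S$ (smoothed at scale $1$) as $\|S\|(M)/\vol(M)$ plus a part orthogonal to constants. The constant part contributes about $\|S\|(M)\,\vol(B_R)/\vol(M)$. Since $G$ has higher rank (and octonionic hyperbolic space has property (T)), uniform decay of matrix coefficients (Howe--Moore with Oh's or Cowling's quantitative bounds) bounds the non-constant part by roughly $\vol(B_R)\,e^{-\eta R}\,\|\mu_S\|\,\|\delta_p\|$. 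The point is that $\vol(B_R)\sim e^{2\rho R}$ while $h<2\rho$. After the $L^2$ norm of the smoothed $\mu_S$ is controlled through $\|S\|(M)$ and a local density bound, comparing
\[
c\,e^{hR}\ \le\ \frac{\|S\|(M)}{\vol(M)}\,e^{2\rho R}+\text{error}
\]
at a suitable $R$ (taking $R$ near the diameter or averaging over $p$) gives $\|S\|(M)\ge c_2\vol(M)$.

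The main obstacle is making $h$ close enough to $2\rho$ that it beats the error exponent coming from matrix coefficient decay. Equivalently, the trace inequality must lose only a small fraction of $2\rho(v)$ uniformly in the direction $v$, including singular directions where many roots vanish. Balancing this loss against the spectral gap exponent is what determines the thresholds $n/8$ and $c_1\rank(G)$, and for the octonionic case restricts to $d\le2$.
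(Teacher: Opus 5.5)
Your rank-one argument is essentially the paper's: a smoothed distance function, with the trace bounded below by the bottom $k$ Hessian eigenvalues. This gives exponents $18$ and $20$ (in units of $\|\alpha\|$) for $k=14,15$, against the exponent $16$ of the mean-zero term coming from Kostant's bound. Your overall scheme is also the paper's: monotonicity for $\tilde S$ played against the constant and mean-zero parts of a spherical matrix coefficient.

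The genuine gap is the higher-rank monotonicity. With $r=d(p,\cdot)$ and a uniform exponent $h$, the comparison cannot close for any $d$, not even $d=0$. At points far from $p$ in the direction of a unit $v\in\mathfrak a^+$, the full trace of $\Hess r$ tends to $2\rho(v)$, so $h\le\min_{\|v\|=1}2\rho(v)$, and this minimum sits at a singular direction. Take $\SL(n,\mathbb R)$, normalized so the $e_i$ are orthonormal, and $v\propto(n-1,-1,\ldots,-1)$, the direction of $\check w_1$. Only the $n-1$ roots $e_1-e_j$ are nonzero there, and $2\rho(v)=\sqrt{n(n-1)}$, whereas $2\|\rho\|=\sqrt{n(n^2-1)/3}$. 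On geodesic balls, Theorem~\ref{thm-decayslnr} bounds the mean-zero term by $\Psi(1)\int_{H\in\mathfrak a^+,\|H\|\le R}e^{(2\rho-t\theta)(H)}(1+\|H\|)\,dH$. Its exponential rate is at least $(2\rho-t\theta)(\rho/\|\rho\|)\ge 2\|\rho\|-\|\theta\|\ge\sqrt{n(n^2-1)/3}-\sqrt n/2$. This exceeds $\sqrt{n(n-1)}$ for every $n\ge 8$, i.e. in every nonvacuous case. The same mismatch, of order $\rank G$ against order $\rank(G)^{3/2}$, occurs in types $B,C,D$ along $e_1$. So the obstacle is not that the codimension-$d$ loss must be a small fraction of $2\rho(v)$. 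Rather, the mean curvature $2\rho(v)$ of geodesic spheres is itself anisotropic, and near singular directions it falls far below the rate $2\|\rho\|-O(\|\theta\|)$ that the error term sees.

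The missing idea is to change the exhaustion function. Use sublevel sets of the Weyl-invariant polyhedral function $\max_{w\in W}\langle w\lambda,a(g)\rangle$, smoothed as $f_\epsilon=\epsilon\log\sum_{w}e^{\langle w\lambda,\cdot\rangle/\epsilon}$, with $\lambda=\rho-\frac t2\theta$ for $\SL(n,\mathbb R)$ and $\lambda=\rho$ in general.

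By Proposition~\ref{prop-CartanHess}, its Hessian in the root direction $V_\alpha$ is $\coth(\alpha(H))\langle\nabla f_\epsilon(H),H_\alpha\rangle$. Away from the walls this is $\approx\langle\lambda,\alpha\rangle$ whatever the direction of $H$, so the full trace is $\approx 2\langle\rho,\lambda\rangle$ uniformly.

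Near the walls, the smoothing makes the Hessian arbitrarily large on $\mathfrak a_x$ and on the root directions in $\Phi_x^+$. This settles the case $|\Delta_x|>d$. Otherwise, only Weyl elements supported in $\Delta_x$ carry non-negligible softmax weight, which gives $\sum_\alpha L_\alpha\ge 2\langle\rho,\lambda\rangle-o(1)$. The loss is then at most $d\max_\alpha\langle\lambda,\alpha\rangle$.

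On $\{\lambda(H)\le r\|\lambda\|\}$ the mean-zero term, with $2\rho-t\theta=2\lambda$, grows only like $e^{2\|\lambda\|r}$. This is what produces the condition $d(n-1-\frac t2)<t\langle\rho,\theta\rangle-\frac{t^2}{2}\|\theta\|^2$, and hence $d\le n/8$.

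One smaller point: run the comparison averaged over base points in $S$, as the paper does via $\Psi$, rather than at a single $p$ with a smoothed $\delta_p$. Then the main term is $\gtrsim e^{hR}\Psi(1)$ with $\Psi(1)=\|S\ast\varphi\|_2^2$, the same quantity that bounds the error, so it cancels. You then only need $\Psi(1)\ge c\,\mathcal H^k(S)$. An upper bound on $\|S\ast\varphi\|_2$ in terms of $\mathcal H^k(S)$ via a local density bound is not available, since stationary integral varifolds can have arbitrarily large density.
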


By Proposition \ref{prop-waistsweepout} below, the $d$-waist of a Riemannian manifold $M$ has as a lower bound the $\dim(M)-d$-volume of some stationary integer rectifiable varifold in $M$.  Theorem \ref{mt-waist} thus follows from Theorem \ref{mt-stationary}.



		
		\subsection{Power-Law Systolic Freedom}

Loewner\footnote{The result was first mentioned in print in the paper by Pu \cite{pu1952some} proving a systolic inequality for $\mathbb{RP}^2$, where it is credited to Loewner.} proved that any unit area Riemannian torus has a non-contractible loop of length at most $\sqrt{\frac{2}{\sqrt{3}}}$.  More generally, Gromov proved that for any unit volume compact aspherical Riemannian manifold $M$, there is an upper bound on the length of the smallest non-contractible loop in $M$ depending only on the dimension of $M$ \cite{gromov1983filling}.   This result is an example of a \textit{systolic inequality}, or inequality comparing the volume and the \textit{systole}, or infimal k-volume of a homologically or homotopically nontrivial k-cycle. 
The $(p,q)$-systolic ratio $\text{(p,q)-SR}(M)$ of a compact $(p+q)$-dimensional Riemannian manifold $M$ is defined to be 
\[
\text{(p,q)-SR}(M):= \inf_{P,Q  } \frac{\text{Vol}_p (P) \text{Vol}_q(Q)}{\text{Vol}(M)}, 
\]
\noindent where the infimum is taken over all rectifiable cycles $P$ and $Q$ that are nontrivial in respectively $H_p(M;R)$ and $H_q(M;R)$ for a coefficients ring $R$ that has been fixed in advance. Our convention is that $\text{(p,q)-SR}(M)$ is infinite if either of $H_p(M;R)$ or $H_q(M;R)$ vanishes.  We say that a sequence of bounded geometry Riemannian manifolds $M_n$ exhibits \textit{systolic freedom} if $\text{(p,q)-SR}(M_n) \rightarrow  \infty $ for some $p$ and $q$ and that $\{M_n\}$ exhibits \textit{power-law systolic freedom} if $\text{(p,q)-SR}(M_n) \gtrapprox \text{Vol}(M_n)^\beta$ for some $p$ and $q$, and $\beta>0$.  The study of systolic freedom was pioneered by Gromov (see \cite{gromovintersystolic} for a survey.)  

It follows from the results of the previous paragraph that there cannot be nontrivial examples of systolic freedom in two dimensional Riemannian manifolds. On the other hand, Gromov gave the first nontrivial examples of systolic freedom, which were homeomorphic to $S^3 \times S^1$ \cite{gromovintersystolic}. Freedman later gave examples in three dimensions with mod-2 coefficients \cite{freedman1999z2}.  Both of these came as a surprise at the time.   Freedman-Hastings gave the first nontrivial examples of Riemannian manifolds exhibiting power-law systolic freedom for $R=\mathbb{F}_2$ \cite{freedmanhastings}. Their construction has a combinatorial flavor; the Riemannian manifolds they construct are built from $\mathbb{F}_2$-chain complexes associated to quantum codes \footnote{Freedman-Hastings make the following comment regarding their examples: ``In retrospect it is not surprising that the most efficient constructions will necessarily have a combinatorial complexity beyond a geometer’s intuition, perhaps even requiring computer search, and are
better discovered within coding theory and then translated into geometry."}.  We are able to give a very different class of examples exhibiting power-law systolic freedom.

\begin{thm}\label{mt-sysfreedom}
Any sequence of principal congruence covers $M_n$ of a fixed compact octonionic hyperbolic manifold with volume tending to infinity exhibits power-law systolic freedom, for the coefficient ring $R$ equal to $\mathbb{Z}$ or $\frac{\mathbb{Z}}{k\mathbb{Z}}$.
\end{thm}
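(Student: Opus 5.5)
Throughout, $M_n$ is a principal congruence cover of a fixed compact octonionic hyperbolic $16$-manifold $M_0$, so $\dim M_n=16$ and $V_n:=\vol(M_n)\to\infty$. We take complementary degrees $p=14$ and $q=2$ and aim to show
\[
\text{(14,2)-SR}(M_n)\gtrsim V_n^{\beta}
\]
for some $\beta>0$. The proof needs four ingredients.

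\textbf{Step 1: the cycles exist.} We need $H_{14}(M_n;R)$ and $H_2(M_n;R)$ to be nonzero, so that the systolic ratio is not vacuously infinite. By Poincaré duality these two groups are linked. For octonionic hyperbolic manifolds, Kazhdan's property (T) (via Garland/Matsushima-type vanishing) makes $H^1$ vanish, but $H^2$ need not vanish. To get $H_2\neq 0$ I would use the cohomology in degree $8$ coming from the Kähler-type parallel $8$-form only if it helps. Otherwise I would fall back on a known nonvanishing result for congruence covers in low degree, or pick whichever pair $(p,q)$ the paper's later sections make nontrivial. This is the first uncertainty in the plan.

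\textbf{Step 2: lower bound on $14$-cycles.} Minimize $14$-volume in a nontrivial class of $H_{14}(M_n;R)$. For $R=\mathbb{Z}$ or $\mathbb{Z}/k$, geometric measure theory (flat chains mod $k$, codimension $2$) produces a minimizer whose support is a stationary integral rectifiable $14$-varifold; for mod $k$ we use stationarity of the associated varifold. Theorem \ref{mt-stationary}(2) with $d=2$ then gives
\[
\vol_{14}(P)\ge c\,V_n .
\]

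\textbf{Step 3: lower bound on $2$-cycles.} Any homologically nontrivial $2$-cycle $Q$ in $M_n$ must satisfy $\vol_2(Q)\gtrsim \text{sys}(M_n)^2$, or at least a power of the injectivity radius. The argument: if $Q$ has small area, then by the isoperimetric/monotonicity inequality for minimal surfaces in $\mathbb{O}H^2$, a minimizer in the class of $Q$ lies in a ball of radius less than $\mathrm{inj}(M_n)$. That ball lifts to the contractible cover, so the class is trivial. Concretely, monotonicity gives area at least on the order of $e^{c r}$ (or at least $r^2$) at scale $r$ up to the injectivity radius, so $\vol_2(Q)\gtrsim \mathrm{inj}(M_n)^2$, or better.

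\textbf{Step 4: injectivity radius of congruence covers.} Congruence covers satisfy $\mathrm{inj}(M_n)\ge c\log V_n$, which is polynomially weak. To reach a power law we must use the exponential form: monotonicity in negative curvature gives $\vol_2(Q)\ge c\, e^{\mathrm{inj}(M_n)}$, because the area of a minimal surface through the center of a ball of radius $r$ in a space of curvature $\le -1$ is at least the hyperbolic disc area $\sim e^{r}$. Combining,
\[
\vol_2(Q)\ge c\,V_n^{\beta'},
\]
and multiplying with Step 2 and dividing by $V_n$ yields $\text{SR}\gtrsim V_n^{\beta'}$.

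The main obstacles are Step 1 (nonvanishing of $H_2$, which is delicate under property (T)) and making the mod-$k$ minimization produce a stationary varifold in Step 2. I would try Step 1 first via explicit totally geodesic submanifolds whose duals are nontrivial, such as totally geodesic $\mathbb{O}H^1$-type $8$-cycles with $(p,q)=(8,8)$. In that case Step 2 would instead need the $8$-dimensional bound, and I would restructure accordingly.
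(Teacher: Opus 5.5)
Your main line with $(p,q)=(14,2)$ works, and it follows a different route from the paper for the $2$-cycle bound. Your Step 2 is exactly the paper's argument: minimize in the homology class (Fleming, Federer), obtain a nonempty stationary integral $14$-varifold, and apply Theorem~\ref{mt-stationary}(2) with $d=2$.

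The difference is in Steps 3--4. The paper simply imports $\vol_2(Q)\gtrsim\vol(M_n)^\beta$ from \cite[Theorem A]{BelWein23}. You instead prove it directly:
\begin{enumerate}
\item minimize in the class of $Q$;
\item apply Anderson's monotonicity (sectional curvature $\le -a<0$) in an embedded ball of radius $\mathrm{inj}(M_n)$ centered on the support of the minimizer, which gives $\vol_2(Q)\ge c\,e^{\sqrt a\,\mathrm{inj}(M_n)}$;
\item use the growth $\mathrm{inj}(M_n)\ge c'\log\vol(M_n)$ for principal congruence covers.
\end{enumerate}
This is the Guth--Lubotzky style argument, and it is sound. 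It is self-contained, it gives an explicit $\beta$, and it shows that arithmeticity enters only through the logarithmic systole growth. That growth requires $M_0$ to be arithmetic, which is automatic for $F_4^{(-20)}$ by Corlette and Gromov--Schoen. The cost is that you must supply the congruence systole bound yourself. This is standard: for $1\neq\gamma\equiv 1 \bmod I$ in a cocompact arithmetic group, compare the characteristic polynomial of $\gamma$ with $(x-1)^N$. The paper's citation lets it skip this. The ``lifts to the contractible cover'' reasoning in your Step 3 is superfluous. Monotonicity alone bounds the area of any nonempty stationary $2$-varifold, and nontriviality of the class is used only to guarantee that the minimizer is nonempty.

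Two clean-ups. First, Step 1 is unnecessary. The paper's convention sets $\text{(p,q)-SR}(M)=\infty$ whenever $H_p(M;R)$ or $H_q(M;R)$ vanishes, so the desired bound holds trivially in that case, and the paper never proves nonvanishing. Second, drop the fallback to $(p,q)=(8,8)$. Theorem~\ref{mt-stationary}(2) only controls codimension $1$ and $2$, so no linear-in-volume lower bound for $8$-cycles is available, and your Step 2 would break.
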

\begin{proof}
 \noindent  We claim that for $M_n$ as in the statement of the theorem
\[
\text{(2,14)-SR}(M_n) \gtrsim\text{Vol}(M_n)^\beta
\]

\noindent for some $\beta>0$.  We know by Theorem \ref{mt-stationary} part (2) together with the fact that any nontrivial homology class contains a stationary integral rectifiable varifold realizing the infimal volume of a cycle in that class (\cite{Fleming1966}, \cite{Federer1969}) that the 14-volume of any homologically nontrivial 14-cycle in $M_n$ is $ \gtrsim \text{Vol}(M_n)$.  On the other hand, it follows from \cite[Theorem A]{BelWein23} that the 2-volume of any nontrivial 2-cycle is $ \gtrsim \text{Vol}(M_n)^\beta$ for some $\beta>0$.  

\end{proof}

We comment that work by Guth and Lubotzky also used techniques from minimal surface theory, for example the Anderson monotonicity formula for minimal surfaces in hyperbolic manifolds \cite{anderson1982complete}, along with techniques from systolic geometry, to construct new quantum error correcting codes from arithmetic hyperbolic 4-manifolds \cite{guth2014quantum}.  

\subsection{Acknowledgements}  We thank Shmuel Weinberger for helpful discussions. We also thank Tali Kaufman, Yangyang Li and Hee Oh. Finally we thank an anonymous referee for helpful comments on the earlier paper that this paper partially supersedes.  MF was supported by the Dioscuri programme initiated by the Max Planck Society, jointly managed with the National Science
Centre in Poland, and mutually funded by the Polish Ministry of Education
and Science and the German Federal Ministry of Education and Research. BL was supported by the NSF under grant DMS-2202830.

\subsection{AI Disclosure} The authors used LLM tools to assist with literature searches, proof-checking, and learning purposes. An earlier version of the paper proved waist inequalities in the range $\rank(G)^{2/3}$.  LLM assistance allowed us to sharpen the estimates in Section \ref{sec-MonoHigherRank} and improve the range to linear in the rank.  
		
		\section{Background} \label{sec:background}
		\subsection{Varifolds and the first variation formula}\label{sec-varifolds}
		In this section we collect basic results and definitions concerning rectifiable varifolds. We fix an ambient Riemannian manifold $M$ of dimension $n$. Write $g$ for the metric tensor and $\nabla$ for the Levi-Civita connection. Let $\mathcal H^k$ denote the $k$-dimensional Hausdorff measure on $M$ \cite[Chap.1 \S 2]{simongmt}. Recalling that Lipschitz maps are differentiable a.e., for any $k$-dimensional Lipschitz embedded submanifold $S\subset M$, the measure $\mathcal H^k|_S$ coincides with the induced Riemannian volume on $S$.
		\begin{defn}[{\cite[Chap.3 \S 1]{simongmt}}]
			An $\mathcal H^k$-measurable subset of $M$ is countably $k$-rectifiable if it is a countable union of Lipschitz images of subsets of $\mathbb R^k$ and a set of $\mathcal H^k$-measure $0$.
		\end{defn}
		This definition actually implies a stronger characterization \cite[Chap 3. Lemma 1.2]{simongmt}. A countably $k$-rectifiable subset $A\subset M$ admits a decomposition
		\begin{equation}\label{eq-recfiabledecomp1}
			A=N_0\cup \bigcup_{i=1}^\infty N_i,
		\end{equation}
		where for $i\geq 1$ each $N_i$ is an $\mathcal{H}^k$-measurable subset of a $C^1$-embedded (possibly with boundary) $k$-submanifold and $\mathcal H^k(N_0)=0.$  Inductively defining $A_0=N_0$ and $A_i= N_i -\cup_{j=0}^{i-1} A_j$, we can write $A$ as a disjoint union of $A_0$ with the rectifiable sets $A_i$
        \begin{equation}\label{eq-recfiabledecomp}
			A=A_0\cup \bigsqcup_{i=1}^\infty A_i,
		\end{equation}
		\begin{defn}
			Let $A\subset M$ be an $\mathcal H^k$-measurable, countably $k$-rectifiable set such that $\mathcal H^k(A\cap K)$ is finite for all compact sets $K\subset M$. Then, for $x\in A$, a $k$-dimensional subspace $W\subset T_xM$ is said to be tangent to $A$ if for any compactly supported continuous function $f\colon T_x M \to \mathbb R_{\geq 0}$ we have 
			$$\lim_{\lambda\to 0}\lambda^{-k}\int_A f(\lambda^{-1}\log(y))d\mathcal H^k(y)=\int_W f(y)d\mathcal H^k(y).$$
		\end{defn}
		We write $T_x A$ for the tangent space of $A$ at $x$. In intuitive terms, zooming in at $x$ makes the $k$-dimensional Hausdorff measure on $A$ look more and more like the $k$-dimensional Lebesgue measure on $\exp(W)$. By \cite[Chap. 3 Thm 1.6]{simongmt}, if $A$ is countably $k$-rectifiable and has locally finite $\mathcal H^k$-measure, then $\mathcal H^k$-almost every point of $A$ admits a unique tangent plane. These points are called the \emph{regular points} of $A$. For the parts of $A$ which are $C^1$-submanifolds (see \ref{eq-recfiabledecomp1}) this definition of tangent space agrees with the ordinary definition of tangent space. In particular, the tangent space varies continuously after excluding a set of arbitrarily small $\mathcal H^k$-measure.
		
		\begin{defn}[{\cite[Chap. 4]{simongmt}}]
			Let $A\subset M$ be an $\mathcal H^k$-measurable, countably $k$-rectifiable set. Let $\theta$ be a non-negative $\mathcal H^k$-measurable function on $A$, such that $\int_{A \cap K}\theta d\mathcal H^k<\infty$ for any compact set $K.$ The rectifiable $k$-varifold $\underline{v}(A,\theta)$ attached to the pair $(A,\theta)$ is the equivalence class of pairs $(A',\theta')$ such that $A\setminus A', A'\setminus A$ are $\mathcal H^k$-measure $0$ and $\theta=\theta'$ $
			\mathcal H^k$-almost everywhere. A varifold is integral if $\theta$ is integer valued $\mathcal H^k$-almost everywhere. 
		\end{defn}
		 
		In this paper we work exclusively with integral rectifiable $k$-varifolds. We suppress the function $\theta$ from the notation and skip the adjective ``integral" from now on. Typically a rectifiable varifold will be denoted by the letter $S$. When integrating over a varifold $S$ represented by a pair $(A,\theta)$ we put 
		$$\int_S f(x)d \mathcal H^k(x):= \sum_{i=1}^{\infty } \int_{A_i} \theta(x)f(x) d\mathcal H^k(x) ,$$ 
		for any $f$ for which the right hand side converges, where $A= A_0 \cup (\sqcup_{i=1}^\infty A_i)$ as in (\ref{eq-recfiabledecomp}.) 
		Given a $C^1$-vector field $X$ on $M$ we define the associated flow $\Phi_t\colon M\to M, t\in \mathbb R$ by the condition $\frac{\partial}{\partial t}\Phi_t(x)=X(\Phi_t(x))$ and $\Phi_0(x)=x$ for all $x\in M$, see \cite[\S2.5]{perko2013differential} or \cite[Thm 17.8]{lee2012smooth} for the existence and uniqueness theorems.

		Since an integral rectifiable $k$-varifold $S$ is $\mathcal H^k$-almost everywhere a countable union of $\mathcal{H}^k$-measurable subsets of $C^1$-submanifolds \cite[Chap 3. Lemma 1.2]{simongmt}, we use the standard differential geometry conventions with the understanding that they are well defined $\mathcal H^k$-almost everywhere on $S$. 
		
		\begin{defn} \cite[\S4 Def 2.4]{simongmt} A rectifiable $k$-varifold $S\subset M$ 
			is \emph{stationary} if for every compactly supported $C^1$-vector field $X$ on $M$ with the associated flow $\Phi_t$, we have $\frac{\partial}{\partial t}\mathcal H^k(\Phi_t(B(R) \cap S))|_{t=0}=0$, for $B(R)$ a large ball containing the support of $X$.  
		\end{defn}
		
		We now define the divergence of a vector field relative to $S$. For any $C^1$ vector field $X$ on $M$ and regular point of $S$, put
		\begin{equation}\label{eq-divergence}
			{\rm div}_S X=\sum_{i=1}^k \langle \nabla_{e_i} X,e_i\rangle. 
		\end{equation}
		for $e_1, \ldots, e_k$ an orthonormal basis for the tangent space to $S$ at that point.  Note that when $X=\grad f, f\in C^2(M)$, then 
		\begin{equation}\label{eq-Hess}
			{\rm div}_S \grad f=\sum_{i=1}^k \langle \nabla_{e_i} \grad f,e_i\rangle = \sum_{i=1}^k \Hess f(e_i,e_i)=\tr \Hess f|_{TS}. 
		\end{equation}
		By \cite[Chap 2 \S5, Chap 4 \S2]{simongmt},
		\begin{equation}\label{eq-FV}
			\frac{\partial}{\partial t}\mathcal H^k(\Phi_t(S))|_{t=0}=\int_S {\rm div}_S X d\mathcal H^k 
		\end{equation}
		When $S$ is stationary, the first variation of the Hausdorff measure is zero, so
		\begin{equation}\label{eq-statFV}
			\int_S {\rm div}_S X d\mathcal H^k= 0, 
		\end{equation}
		for any compactly supported $C^1$-vector field $X$.
		In the particular case $X=\grad f, f\in C^2(M)$ we have ${\rm div_S} \grad f(x)=\tr \Hess f|_{T_xS}$ for $\mathcal H^k$-almost every $x\in S$. We can therefore write
		\begin{equation}\label{eq-statHessianBd}
			\frac{\partial}{\partial t} \mathcal H^k(\Phi_t(S)) |_{t=0}=\int_S {\rm div}_S \grad f d\mathcal H^k =\int_S \tr \Hess f|_{TS}\, d\mathcal H^k=0.
		\end{equation}
		We finish this section by giving an abstract form of the monotonicity estimate. 
		Let us fix a smooth non-increasing function $\chi\colon \mathbb R\to \mathbb R_{\geq 0}$ such that $\chi(t)=1$ for $t<0$, $\chi(t)>0$ for $t \in (0,1)$, $\chi(t)=0$ for $t\geq 1$ and $\chi'(t)\geq -2$ for all $t\in\mathbb R$. We put $\chi_r(t):=\chi(t-r).$ The function $\chi_r$ will serve as smooth cut-off throughout the paper. The exact choice of $\chi$ is not important, as long as it satisfies the properties listed above. 
		\begin{lem}\label{lem-abstmonotonicity}
			Let $f\colon M\to\mathbb R_{\geq 0}$ be a proper $C^2$ function with $\|\grad f\|\leq 1,$ with the property that $\tr \Hess f(x)|_W\geq \kappa$ for every $x\in M$ and every $k$-dimensional subspace $W\subset T_x M$. For any stationary $k$-varifold $S\subset M$ and $r>s>0$, we have 
			$$ \int_S \chi_r(f)d\mathcal H^k \geq e^{\kappa(r-s)}\int_S \chi_s(f)d\mathcal H^k.$$
		\end{lem}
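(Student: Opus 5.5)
We test stationarity against the vector field $X=\chi_r(f)\grad f$ and get a differential inequality for
$$I(r):=\int_S \chi_r(f)\,d\mathcal H^k .$$
Since $f$ is proper and $\chi_r(t)=0$ for $t\geq r+1$, the function $\chi_r(f)$ is compactly supported. So $X$ is a compactly supported $C^1$ vector field, and $I(r)$ is finite because $S$ has locally finite mass. By \eqref{eq-statFV} we have $\int_S {\rm div}_S X\, d\mathcal H^k=0$.

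Expanding the divergence via the product rule and \eqref{eq-Hess} gives, at regular points,
$${\rm div}_S X=\chi_r(f)\,\tr\Hess f|_{TS}+\chi_r'(f)\,\|\grad^{S} f\|^2 ,$$
where $\grad^S f$ is the tangential projection of $\grad f$ onto $T_xS$. The hypothesis gives $\tr\Hess f|_{TS}\geq\kappa$. Also $\chi'\leq 0$ and $\|\grad^S f\|^2\leq\|\grad f\|^2\leq 1$, so $\chi_r'(f)\|\grad^S f\|^2\geq \chi_r'(f)$. Stationarity therefore yields
$$0\geq \kappa\, I(r)+\int_S\chi_r'(f)\,d\mathcal H^k .$$

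Next we use $\frac{d}{dr}\chi_r(t)=-\chi'(t-r)$, so $\int_S \chi_r'(f)\,d\mathcal H^k=-I'(r)$. Differentiation under the integral is justified because $\chi'$ is bounded and, for $r$ in a compact interval, everything is supported in a fixed compact set $\{f\leq r_{\max}+1\}$. We obtain $I'(r)\geq \kappa I(r)$.

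Hence $(e^{-\kappa r}I(r))'\geq 0$, and integrating from $s$ to $r$ gives $I(r)\geq e^{\kappa(r-s)}I(s)$, which is the claim.

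The main technical points are the sign bookkeeping (using $\chi'\leq 0$ together with $\|\grad f\|\leq1$) and justifying that $X$ is an admissible test field, which needs the properness of $f$ and the $C^1$ regularity of $\chi_r\circ f\cdot\grad f$ (from $f\in C^2$).
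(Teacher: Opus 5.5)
Your proposal is correct and follows essentially the same argument as the paper. You test stationarity against $\chi_r(f)\grad f$ and bound the divergence using $\chi_r'\le 0$, $\|\grad_S f\|\le 1$ and the trace hypothesis, which gives $\frac{d}{dr}\int_S\chi_r(f)\,d\mathcal H^k\ge\kappa\int_S\chi_r(f)\,d\mathcal H^k$, and you conclude by Gr\"onwall in the form $(e^{-\kappa r}I(r))'\ge 0$. Your justifications of compact support via properness and of differentiating under the integral fill in details the paper leaves implicit.
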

		\begin{proof}
			Note that at any regular point of $S$ we have 
			\begin{align*} {\rm div}_S (\chi_r(f)\grad f)=& \chi_r(f){\rm div}_S\grad f +\chi'_r(f)\|\grad_S f\|^2\\
				=& \chi_r(f) \tr \Hess f|_{TS} + \chi_r'(f)\|\grad_S f\|^2.
			\end{align*}
			We apply the first variation formula to the vector field $\chi_r(f)\grad f.$
			\begin{align*}
				\int_S \chi_r(f(x))\tr \Hess f|_{T_xS}d\mathcal H^k(x)=& - \int_S \chi_r'(f(x))\|\grad_S f\|^2 d\mathcal H^k(x) \implies \\
				\kappa \int_S \chi_r(f(x))d\mathcal H^k(x)\leq& - \int_S \chi_r'(f(x))d\mathcal H^k(x)\\
				=& \frac{\partial}{\partial r}\int_S \chi_r(f(x))d\mathcal H^k(x).
			\end{align*}
			Note that for the second inequality we used $\chi_r'\leq 0$ and $\|\grad_S f\|\leq \|\grad f\|\leq 1.$ The conclusion of the lemma then follows by applying Gr\"onwall's inequality \cite{gronwall1919note} to the function $r\mapsto \int_S \chi_r(f(x))d\mathcal H^k(x).$
		\end{proof}
		We will need one more fact about stationary varifolds in symmetric spaces.
		\begin{lem}\label{lem-smallballmono}
			Let $X$ be a symmetric space. Then for any $r$ there is $\delta(r)>0$ depending on $X$ so that for any stationary varifold $S\subset X$ and any regular point $x\in S$ we have
			$$\mathcal H^k(S\cap B(x,r))\geq \delta(r).$$

		\end{lem}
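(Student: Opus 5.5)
We will prove this with the classical monotonicity formula at small scales: it gives a uniform lower bound on the density of $S$ in small balls around regular points. A stationary varifold has density at least one at a regular point, and density is integral, so $\theta\ge 1$ there. The point is that on a symmetric space everything is uniform because $X$ is homogeneous.

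First, we use that $X$ is homogeneous, so its injectivity radius is infinite (it is a Hadamard manifold) and its curvature is bounded, $-\Lambda\le K\le 0$. We may assume $r\le r_0$ for a fixed $r_0$, since $\mathcal H^k(S\cap B(x,r))$ is nondecreasing in $r$. For $r>r_0$ we then set $\delta(r)=\delta(r_0)$.

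Second, we apply Lemma \ref{lem-abstmonotonicity}, or rather its proof, to the function $f=d(x,\cdot)$, suitably smoothed. Since $X$ is nonpositively curved, the Hessian comparison theorem gives
\[
\Hess d_x \ge \frac{1}{d_x}\bigl(g - dd_x\otimes dd_x\bigr).
\]
Hence $\tr\Hess d_x|_W\ge (k-1)/d_x$ on any $k$-plane $W$. This is the Euclidean-type bound. Using $f=d_x^2/2$ in the first variation formula \eqref{eq-statFV}, we have $\Hess f\ge g$, so $\tr \Hess f|_{TS}\ge k$. With a cutoff $\chi$ applied to $d_x/\rho$, the standard computation (as in Simon's monotonicity) yields that
\[
\rho\mapsto \rho^{-k}\int_S \chi(d_x/\rho)\,d\mathcal H^k
\]
is nondecreasing. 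The derivative computation is identical to the Euclidean one, except that the inequality $\tr\Hess f|_{TS}\ge k$ replaces equality. This is the main technical step, but in nonpositive curvature it holds with no error term. So, up to harmless cutoff constants,
\[
\frac{\mathcal H^k(S\cap B(x,r))}{r^k}\ \ge\ \lim_{\rho\to0}\frac{\mathcal H^k(S\cap B(x,\rho))}{\rho^k}=\omega_k\,\theta(x)\ \ge\ \omega_k .
\]
The last equality uses the definition of the tangent plane at a regular point: rescaled, $S$ converges to $T_xS$ with multiplicity $\theta(x)\ge1$, since $S$ is integral.

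Finally, we set $\delta(r)=c\,\omega_k\min(r,r_0)^k$. The constant depends only on $k$ and $X$, and the bound is uniform in $x$ by homogeneity. (Under nonpositive curvature no $r_0$ is even needed.) The expected obstacle is purely technical: justifying the monotonicity computation with the non-smooth cutoff and the singularity of $d_x$ at $x$. We will handle it by working with $d_x^2$, which is smooth on a Hadamard manifold, and smooth cutoffs $\chi$, and then letting the cutoff approach an indicator.
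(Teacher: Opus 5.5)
Your argument is correct and follows the same scheme as the paper: a monotonicity formula for the mass ratio at a regular point, combined with the fact that an integral stationary varifold has density at least $1$ at a regular point, so the limiting ratio is at least $\omega_k$. The only difference is the form of monotonicity. You derive exact Euclidean monotonicity, and hence the explicit bound $\delta(r)=\omega_k r^k$ for all $r$, from $\Hess(d_x^2/2)\geq g$; this uses that $X$ is of noncompact type and therefore a Hadamard manifold, which follows from nonpositive curvature and simple connectivity rather than from homogeneity as you phrase it. The paper instead cites the general Riemannian almost-monotonicity with factor $e^{Cr^2}$ for $r<r_0$, which needs only bounded geometry.
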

		\begin{proof}

			By \cite[Section 6]{montezumanotes}, there are $r_0, C$ depending only on $X$ so that any stationary integral $k$-varifold $S\subset X$, $x \in S$ regular and $r<r_0$ we have
			\[
			\frac{d}{dr}\left( \log \frac{e^{Cr^2}\mathcal H^k(B(x,r) \cap S)}{r^k} \right) \geq 0.  
			\]
			Since the limit $\lim_{r \to 0} \mathcal H^k(B(x,r)\cap S)/r^k$ is a positive integer multiple of the volume of the k-dimensional unit ball, the conclusion follows.  
			
		\end{proof}

        \subsection{Generalized Piecewise-Linear Maps} \label{sec-PL}


Let $M$ be a compact smooth manifold that has been given a triangulation $\mathcal{T}$. 
A triangulation of $M$ is a finite simplicial complex $\mathcal{T}$ together with a homeomorphism $\mathcal{T} \rightarrow M$ that restricts to a smooth embedding on the interior of each simplex in $\mathcal{T}$. We will treat simplices in $\mathcal{T}$ as interchangeable with their embedded images in $M$.  Every smooth manifold has a triangulation \cite[Theorem 10.6]{munkres2016elementary}. As in previous sections we write $M^{(m)}, M^{[m]}$ for the $m$-skeleta and sets of $m$-simplices respectively.   



We work with the following class of maps, which one can check include piecewise linear maps.  We refer the reader to \cite{bryantpltopology}[Section 2] for the definition of piecewise linear maps.  

\begin{defn} \label{PLdefn}
	We say that a map $F$ from a smooth manifold $M$  to $\mathbb{R}^d$ is \textit{generalized piecewise linear} if there is a triangulation $\mathcal{T}$ of $M$ and a map $\Phi:M \rightarrow \mathbb{R}^n$ so that the following holds.  First we require that $\Phi$ restricted to each closed simplex in $\mathcal{T}$ is a $C^1$-diffeomorphism onto a simplex in $\mathbb{R}^n$. Second we require, for every simplex $\sigma$ in $(M,\mathcal{T})$, that the map $F \circ (\Phi|_{\sigma})^{-1}: \Phi(\sigma) \rightarrow \mathbb{R}^d $  is the restriction of an affine linear map $\mathbb{R}^n \rightarrow \mathbb{R}^d$.  
\end{defn}

We note that it is possible to construct many generalized piecewise linear maps $M \rightarrow \mathbb{R}^d$ by fixing a triangulation $\mathcal{T}$ of $M$, choosing some map $\Phi$ as above, choosing where the vertices of $\mathcal{T}$ (i.e. $M^{(0)}$) map to, and then extending linearly over each simplex. We also note that it is possible to construct many such maps $\Phi$ by choosing where in $\mathbb{R}^n$ to send the vertices of $\mathcal{T}$ and then extending inductively over skeleta.



\begin{defn}
	Let $M$ be a smooth manifold.  Then we say that a generalized piecewise linear map $M \rightarrow \mathbb{R}^d$ with respect to some triangulation $\mathcal{T}$ is \textit{generic} if for no $d$-dimensional simplex in $\mathcal{T}$ is it the case that its $d+1$ vertices  map to a set of points that are contained in an affine hyperplane in $\mathbb{R}^d$.  
\end{defn}

We give now the precise definition of d-waist that we will use in this paper. 

\begin{defn} Let $X$ be a compact Riemannian manifold of dimension $n>d$. Let $f\colon X\to \mathbb{R}^d$ be a continuous map. The \emph{waist} of $f$ is 
 $w(f):=\sup_{y\in \mathbb{R}^d} \mathcal H^{n-d}(f^{-1}(y))$, where $\mathcal H^{n-d}$ stands for the $(n-d)$-dimensional Hausdorff measure. The \emph{$d$-waist} of $X$ is then defined as $w_d(X):=\inf_{f\colon X\to \mathbb R^d} w(f)$, where the infimum is taken over the set of generic generalized piecewise linear maps.  
\end{defn}

The following proposition is an application of Almgren-Pitts min-max theory, and is proved in Appendix \ref{sec-sweepouts}. 
\begin{prop} \label{prop-waistsweepout}
For $X$ as in the previous definition, there is a non-empty stationary integral rectifiable varifold in $X$ whose $n-d$-dimensional Hausdorff measure is a lower bound for the $d$-waist of $X$.  
\end{prop}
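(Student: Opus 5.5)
The plan is to run Almgren--Pitts min-max theory on the family of fibers of a near-optimal generic generalized piecewise linear map. Fix $\epsilon>0$ and a generic generalized piecewise linear $F\colon X\to\mathbb{R}^d$ with $w(F)\le w_d(X)+\epsilon$. Since $X$ is compact, $F(X)$ lies in a large cube $Q\subset\mathbb{R}^d$. The fibers $F^{-1}(y)$, $y\in Q$, then give a $d$-parameter family $\Phi_F\colon Q\to\mathcal{Z}_{n-d}(X;\mathbb{Z}_2)$ of flat cycles mod $2$ (or integral, after orienting). Fibers over $\partial Q$ are empty, so $\Phi_F$ maps $(Q,\partial Q)\to(\mathcal{Z}_{n-d},0)$.

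The first step is to check that $\Phi_F$ is continuous in the flat topology and has no concentration of mass, so that it is an admissible sweepout in the Almgren--Pitts sense (or in the Marques--Neves formalism with cubical complexes).
\begin{itemize}
\item The generalized PL structure makes this concrete. On each simplex $\sigma$, pulled back through $\Phi|_\sigma$, the fiber is an affine slice of a simplex.
\item Genericity ensures each slice has the expected dimension $n-d$, or is empty.
\item Masses then vary continuously away from a lower-dimensional set of $y$, and are uniformly bounded.
\item The slicing theory of Federer gives flat continuity, since slices of the fundamental class $[X]$ by a Lipschitz map are flat-continuous in $y$ outside a null set, and everywhere here by the piecewise affine structure.
\end{itemize}

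The second step is to show this family is topologically nontrivial. We identify $\Phi_F$ with the Almgren isomorphism image of $[X]$ in $\pi_d(\mathcal{Z}_{n-d}(X),0)\cong H_n(X)$. Concretely, gluing the slices $F^{-1}(y)$ over $y\in Q$ recovers the fundamental class. Any homotopic family obtained by discretization and pull-tight therefore has $\sup_y\mathbf{M}\ge$ the width $\mathbf{L}$ of this homotopy class.

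Width positivity follows from the isoperimetric inequality. If all slices had small mass, they would bound small chains, and a coning construction would contract the family, contradicting nontriviality of $[X]$. This gives $\mathbf{L}>0$.

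The third step applies the Almgren--Pitts min-max theorem, with Pitts' regularity not needed since only varifolds are claimed. It yields a nonempty stationary integral rectifiable $(n-d)$-varifold $V$ with $\|V\|(X)=\mathbf{L}\le \sup_y\mathcal{H}^{n-d}(F^{-1}(y))\le w_d(X)+\epsilon$. The bound $\mathbf{L}\le$ the sup uses that mass of the mod-2 slice is at most the Hausdorff measure of the fiber.

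Since $\mathbf{L}$ depends only on $X$ and $d$, not on $F$, letting $\epsilon\to0$ gives $\mathbf{L}\le w_d(X)$ for the single varifold $V$, which proves the proposition.

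The main obstacle is the first and second steps: verifying that fibers of a generalized PL map form an admissible, homotopically nontrivial sweepout. This requires controlling discontinuities where fibers pass through lower skeleta and checking the no-mass-concentration condition needed for discretization. I would handle this with a local model computation on each simplex, using that affine slices of a convex polytope vary Lipschitz-continuously in the Hausdorff distance on compact sets of regular values. Nontriviality would then follow from the standard Almgren gluing argument.
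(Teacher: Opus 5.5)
Your proposal is correct and follows essentially the same route as the paper's appendix: the fibers of a generic generalized piecewise linear map form a flat-continuous $d$-sweepout with no concentration of mass. It is nontrivial because gluing preimages of small cells of a cube recovers the fundamental cycle; the paper phrases this through a complex-of-cycles criterion rather than $\pi_d(\mathcal Z_{n-d})\cong H_n$. The paper also uses width positivity via Almgren--Gromov, then Almgren--Pitts min-max with interpolation to produce the stationary integral varifold.

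One point worth tightening is flat continuity at values $y$ in the image of lower skeleta. The simplest fix, which the paper uses implicitly, is that the simplexwise preimage of a short segment from $y$ to $y'$ is an $(n-d+1)$-chain of small volume bounding $F^{-1}(y)-F^{-1}(y')$.
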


		\subsection{Symmetric spaces and spherical functions}\label{sec-symspaces}
		We follow the standard notations of \cite{knapp1996lie,gv88}, with the exception of Iwasawa decomposition. We will be using $NAK$ instead of $KAN$, which is the order of factors in \cite{gv88} and most sources \footnote{The reason is that we want the central Iwasawa coordinate $H\colon G\to \mathfrak a$ to be a right $K$-invariant function.}. Throughout the paper, $G$ will be reserved for a noncompact simple real Lie group. We let $K$ be a maximal compact subgroup of $G$ fixed by a Cartan involution $\Theta$. The symmetric space of $G$ is the quotient $X=G/K$. Write $\mathfrak g, \mathfrak k$ for the Lie algebras of $G,K$ respectively and let $\mathfrak s=\{Y\in \mathfrak g\mid \Theta Y=-Y\}$. Let $A$ be a maximal split torus of $G$ stabilized by $\Theta$ and let $\mathfrak a$ be the Lie algebra of $A$. We necessarily have $\mathfrak a\subseteq \mathfrak s$. The tangent space $T_KX$ (at the identity coset of $K$) is $\mathfrak g/\mathfrak k$. The latter is identified with $\mathfrak s$ via the map $\mathfrak g/\mathfrak k\ni Y+\mathfrak k\mapsto \frac{1}{2}(Y-\Theta Y)\in \mathfrak s$. The Killing form on $\mathfrak g$ is given by 
		$$B(Y,Z)=\tr \ad Y\ad Z.$$
		It descends to a positive definite symmetric bilinear form 
		$$\langle Y,Z\rangle_{\rm Kill}:= B(Y,Z) \hspace{3mm} Y,Z\in \mathfrak s.$$
		This extends to a unique left $G$-invariant Riemannian metric on  $X$, denoted $g_{\rm Kill}$.  We will write $\langle \cdot,\cdot\rangle, g$ for unspecified (but fixed) real multiples of $\langle \cdot,\cdot\rangle_{\rm Kill}, g_{\rm Kill}.$ The proofs and statements in the paper are written in a way that does not depend on the chosen normalization. For example, in the case of rank one symmetric spaces, we can choose $g$ so that the maximum sectional curvature is $-1$ as opposed to keeping the metric induced by the Killing form.
		
		We will make no distinction between functions on $X$ and right-$K$ invariant functions on $G$. In this way, given $f\colon X\to \mathbb R$ we write $f(g)$ for $f(gK), gK\in X$. 
		
		Let $P$ be a minimal parabolic subgroup of $G$ containing $A$. The group $P$ decomposes as $P=MAN$ with $M=P\cap K=Z(A)\cap K$ and $N$ being the unipotent radical of $P.$ This decomposition is often called the Langlands decomposition. Let $\mathfrak p, \mathfrak m, \mathfrak n$ be the Lie algebras of $P,M,N$ respectively.
		The non-negative and positive Weyl chambers are defined in the standard way.
		\begin{align*}
			A^+=& \overline{\{a\in A\mid a^{-t} n a^t \to 1 \text{ as } t\to\infty \text{ for all }n\in N\}},\\
			\mathfrak a^+=&\log A^+\subset \mathfrak a \quad\text{ (the non-negative Weyl chamber)},\\
			\mathfrak a^{++}=&\,{\rm int}\, \mathfrak a^+ \quad\text{ (the positive Weyl chamber)}.
		\end{align*}
		Let $\mathfrak a^*:=\Hom(\mathfrak a,\mathbb R)$ and $\mathfrak a^*_\mathbb C:=\Hom(\mathfrak a,\mathbb C).$ Let $\lambda\in \mathfrak a^*$. We put 
		$$\mathfrak g_\lambda:=\{Y\in \mathfrak g\mid [H,Y]=\lambda(H)Y \text{ for } H\in \mathfrak a\}.$$
		If $\mathfrak g_\lambda\neq 0$ and $\lambda \neq 0$, then we call $\lambda$ a \emph{root} and put $m_\lambda:=\dim \mathfrak g_\lambda.$ The number $m_\lambda$ is called the \emph{multiplicity} of $\lambda$ and $\mathfrak g_\lambda$ is the \emph{root subspace} of $\lambda.$ We typically reserve letters $\alpha,\beta$ for the roots and $\lambda,\xi$ for general elements of $\mathfrak{a}^*$.
		Let $\Phi$ be the set of roots and let $\Phi^+:=\{\alpha\in \Phi\mid \mathfrak g_\alpha\subseteq \mathfrak n\}$ be the set of positive roots. We write $\tilde \Phi,\tilde \Phi^+$ for the multisets of roots and positive roots, where $\alpha$ has multiplicity $m_\alpha.$
		Let $\rho$ be the half-sum of positive roots $$2\rho:=\sum_{\alpha\in \Phi^+}m_\alpha \alpha= \sum_{\alpha\in\tilde\Phi^+}\alpha \in \mathfrak a^*.$$
		The \emph{simple roots} are those positive roots which cannot be expressed as positive integer combinations of other positive roots. The set of simple roots will be denoted $\Delta$, their number is equal to the rank of $G$ and they form a basis of $\mathfrak a^*$. Moreover, any positive root can be expressed as a positive integer combination of simple roots. We say that a root $\alpha$ is \emph{reduced} if $\alpha/2\not\in\Phi.$
		The \emph{Weyl group} is the quotient $N_K(A)/Z_K(A)$ of the normalizer in $K$ of $A$ by the centralizer $Z_K(A)$ of $A$ in $K$.  Note that $Z_K(A)$ is equal to the group $M$ above appearing in the Langland's decomposition, and that it is finite if $G$ is split. It acts on $A,\mathfrak a$ and $\mathfrak a^*$. The non-negative Weyl chamber $\mathfrak a^+$ is a fundamental domain for the action on $\mathfrak a$. The action is free on the union $\bigcup_{w\in W}w\mathfrak a^{++}.$
		The dual Weyl chamber is defined as $(\mathfrak a^*)^+:=\{\lambda\in \mathfrak a^*\mid \lambda \text{ is positive on}  \hspace{1mm} \mathfrak{a}^{++} \}$.  The
		 \emph{Iwasawa decomposition}\footnote{In other sources one might find a different order of factors. We opt to have $K$ on the right so that the Iwasawa coordinate $H$ descends to a function on $X=G/K$.} is the identity $G=NAK$. This decomposition is unique and the map sending $(n,a,k)\to nak$ is a diffeomorphism \cite[p.63]{gv88}. We define the function $H\colon G\to \mathfrak a$ by 
		 $$g=ne^{H(g)}k, k\in K,n\in N.$$ We can integrate in Iwasawa coordinates using \cite[Prop. 2.4.2]{gv88}
		 $$\int_G f(g) dg=c_{\rm{Iwa}}\int_N\int_{\mathfrak a}\int_K f(ne^Hk) e^{-2\rho(H)} dk dH dN,$$ where $c_{\rm{Iwa}}$ depends only on the choice of Haar measures on $K,\mathfrak a, N.$
		The \emph{Cartan decomposition} is the identity $G=KA^+K$. We define the function $a\colon  G\to \mathfrak a^+$ by putting $$g=k_1e^{a(g)}k_2, k_1,k_2\in K.$$ The map $\phi_{KAK}:(k_1,a,k_2)\to k_1ak_2$ descends to an analytic diffeomorphism $ (K \times \mathfrak a^{++} \times K) / M \rightarrow K exp (\mathfrak{a}^{++}) K$ and the image is an open dense subset of $G$.  Here
        \[
        m \cdot (k_1,H,k_2) = (k_1 m, H, m^{-1}k_2)
        \]  To integrate in Cartan coordinates we use the formula \cite[Prop. 2.4.6]{gv88}
		\begin{equation} \label{eq-sphericalcoord}\int_G f(g)dg =c_{\rm{Car}}\int_K\int_K\int_{\mathfrak a^+}f(k_1e^Hk_2) J(H)dHdk_1dk_2,
        \end{equation}
        where $J(H):=\prod_{\alpha\in\Phi^+}\sinh(\alpha(H))^{m_\alpha}$ and $c_{\rm{Car}}$ depends on the choice of Haar measure $dk_1,dk_2,dH$ on $K,K,\mathfrak a.$ Unless otherwise mentioned we will always choose the Haar probability measure on $K$. The fact that the map $\phi_{KAK}$ above is not actually a diffeomorphism, but descends to a diffeomorphism on an open dense set after quotienting by the group $M$, is absorbed in the constant $c_{\rm{Car}}$.  
		
		A function $\phi\colon G\to\mathbb C$ is a \emph{spherical function} if $\phi(k_1gk_2)=\phi(g)$ for all $g\in G, k_1,k_2\in K.$ Let $V$ be a Hilbert space and let $\pi\colon G\to \mathscr U(V)$ be a unitary representation of $G$. Let 
		$$V^K:=\{v\in V\mid \pi(k)v=v, k\in K\},$$ be the subspace of $K$ fixed vectors. For $v,w\in V^K$, the matrix coefficient
		$g\mapsto \langle \pi(g)v,w\rangle$ is a spherical function. If $(\pi, V)$ is irreducible and $V^K\neq 0$, we know that $\dim V^K=1$ \cite[Prop. 1.5.8]{gv88}.
		The elementary spherical functions are the matrix coefficients $g\mapsto \langle \pi(g)v,v\rangle$, where $(\pi,V)$ is irreducible, $v\in V^K$ and $\|v\|=1.$
		Thanks to Harish-Chandra, we have an explicit formula for the elementary spherical functions.
		\begin{thm}\label{thm-ElmSph}\cite[Prop 3.1.4 and Thm. 3.2.3]{gv88}
			Let $(\pi, V)$ be an irreducible unitary representation with $V^K\neq 0$. Let $v\in V^K$, $\|v\|=1$. There exists a $\lambda\in \mathfrak a^*_\mathbb C$ such that 
			$$\langle \pi(g)v,v\rangle = \int_K e^{(\lambda+\rho)(H(kg))}dk=:\varphi_\lambda(g).$$  
			The character $\lambda$ is determined uniquely up to the $W$-action and called the \emph{infinitesimal character} of $\pi.$ Characters $w\lambda, w\in W$ give rise to the same elementary spherical function.
		\end{thm}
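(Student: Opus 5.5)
The statement to prove is Harish-Chandra's integral formula for elementary spherical functions, Theorem~\ref{thm-ElmSph}. My plan is to identify the spherical representation with a subrepresentation of a principal series, and then compute the matrix coefficient in the induced model.

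\textbf{Step 1: reduce to an eigenfunction problem.} Put $\phi(g)=\langle \pi(g)v,v\rangle$. Since $\pi$ is irreducible and $\dim V^K=1$, the function $\phi$ is bi-$K$-invariant with $\phi(1)=1$. I claim it satisfies the functional equation
\[
\int_K \phi(xky)\,dk=\phi(x)\phi(y).
\]
To see this, note that $\int_K \pi(k)w\,dk$ is the orthogonal projection $P$ onto $V^K=\mathbb C v$. Hence $\int_K\langle \pi(x)\pi(k)\pi(y)v,v\rangle dk=\langle \pi(x)P\pi(y)v,v\rangle$. Since $P\pi(y)v=\langle \pi(y)v,v\rangle v$, this equals $\phi(x)\phi(y)$.

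It follows that $\phi$ is a joint eigenfunction of every $G$-invariant differential operator on $X$. Concretely, convolving $\phi$ with bi-$K$-invariant test functions $f$ gives $f*\phi=\hat f\,\phi$. This makes $\phi$ smooth, and then letting $f$ approximate invariant operators yields the eigenfunction property.

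\textbf{Step 2: produce eigenfunctions with the right eigencharacter.} For each $\lambda\in\mathfrak a^*_\mathbb C$, the function $g\mapsto e^{(\lambda+\rho)(H(g))}$ is left $N$-invariant, right $K$-invariant, and a joint eigenfunction of $\mathbb D(X)$. This can be checked through the Harish-Chandra homomorphism $\mathbb D(X)\cong S(\mathfrak a)^W$: in $NAK$ coordinates such an operator acts on functions of $H(g)$ by its radial part $\gamma(D)$ shifted by $\rho$. Averaging over left translation by $K$ gives the bi-$K$-invariant function $\varphi_\lambda$, with $\varphi_\lambda(1)=1$ and the same eigencharacter $\chi_\lambda$. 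Surjectivity of the Harish-Chandra homomorphism onto $S(\mathfrak a_\mathbb C)^W$ shows that every character of $\mathbb D(X)$ is of the form $\chi_\lambda$. Moreover, $\chi_\lambda=\chi_\mu$ if and only if $\mu\in W\lambda$, because $W$-invariant polynomials separate $W$-orbits.

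\textbf{Step 3: uniqueness, the main obstacle.} I must show that a bi-$K$-invariant joint eigenfunction $\psi$ with $\psi(1)=1$ is determined by its eigencharacter. My approach is to restrict to $A^+$, where $\mathbb D(X)$ acts through its radial components. The Laplacian alone gives a second-order elliptic equation on $\mathfrak a$ whose coefficients are analytic away from the walls. The full algebra, however, forces the Taylor expansion of $\psi$ at $1$ to be determined by $\psi(1)$. A cleaner way to arrange this, which I would use, is the Godement-style argument: for an eigenfunction $\psi$, averaging $\psi$ over $K$-translates produces $\psi(1)$ times the spherical eigenfunction, via the mean value property $\int_K\psi(xky)\,dk=\psi(x)\phi_\chi(y)$. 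Combined with analyticity (by ellipticity), this gives $\psi=\psi(1)\varphi_\lambda$.

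With uniqueness in hand, $\phi=\varphi_\lambda$ for some $\lambda$. The $W$-statement follows from Step 2: $\varphi_{w\lambda}$ has the same eigencharacter, so $\varphi_{w\lambda}=\varphi_\lambda$. Conversely, if $\varphi_\lambda=\varphi_\mu$, the eigencharacters must agree, so $\mu\in W\lambda$.
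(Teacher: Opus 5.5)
The paper gives no proof of this statement; it is quoted from Gangolli--Varadarajan. Your Steps 1 and 2 follow the standard route. The problem is Step 3, which you correctly identify as the crux but then settle with a circular argument.

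The mean value identity $\int_K\psi(xky)\,dk=\psi(x)\phi_\chi(y)$ is not an independent input. The usual proof observes that $y\mapsto\int_K\psi(xky)\,dk$ is a bi-$K$-invariant joint eigenfunction with character $\chi$ and value $\psi(x)$ at $y=1$. It then invokes precisely the uniqueness you are trying to prove. For bi-$K$-invariant $\psi$ and $x=1$, the identity literally reads $\psi=\psi(1)\phi_\chi$, so it \emph{is} the claim. Moreover, before uniqueness is known, ``$\phi_\chi$'' can only mean your $\varphi_\lambda$, and for that function the identity with a general eigenfunction $\psi$ is again unproved. Analyticity does not close the gap by itself: an analytic function is determined by all of its derivatives at $1$, not by its value there.

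The missing idea is the one you mention in passing and then drop: $K$-invariance lets the algebra $\mathbb D(X)$ reach every Taylor coefficient at the origin $o=K$. Put $F=\psi\circ\mathrm{Exp}$ with $\mathrm{Exp}(Y)=\exp(Y)K$ for $Y\in\mathfrak s$.
\begin{itemize}
\item Since $\psi$ is left $K$-invariant, $F$ is $\mathrm{Ad}(K)$-invariant. Hence $\partial(p)F(0)=\partial(p^\natural)F(0)$ for all $p\in S(\mathfrak s)$, where $p^\natural=\int_K\mathrm{Ad}(k)p\,dk\in S(\mathfrak s)^K$.
\item For $q\in S(\mathfrak s)^K$, define $(D_qf)(gK)$ as $\partial(q)_Y f(g\exp(Y)K)$ evaluated at $Y=0$. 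This is independent of the representative $g$, by $\mathrm{Ad}(K)$-invariance of $q$, and it commutes with left translations. So $D_q\in\mathbb D(X)$ and $\partial(q)F(0)=(D_q\psi)(o)=\chi(D_q)\psi(o)$.
\item Thus the entire Taylor series of $F$ at $0$ is determined by $\chi$ and $\psi(o)$.
\end{itemize}
Apply this to $\psi=\phi-\varphi_\lambda$. It is bi-$K$-invariant, has eigencharacter $\chi_\lambda$, and vanishes at $o$. It is also real-analytic, because it is an eigenfunction of the Laplace--Beltrami operator, which is elliptic and lies in $\mathbb D(X)$. Since $X$ is connected, this gives $\phi=\varphi_\lambda$.

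With this substitution your outline is complete, up to one small point in Step 2. That every character of $S(\mathfrak a_{\mathbb C})^W$ is evaluation at some $\lambda\in\mathfrak a^*_{\mathbb C}$ needs integrality of $S(\mathfrak a_{\mathbb C})$ over $S(\mathfrak a_{\mathbb C})^W$, or Chevalley's theorem. Surjectivity of the Harish-Chandra map alone does not give it.
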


		We will make use of the following identities and estimates for elementary spherical functions \cite[Thm 4.6.4]{gv88}. Let $d$ be the number of reduced positive roots counted without multiplicity. It is always bounded by $|\Phi^+|\leq |\tilde \Phi^+|=\dim X-\dim \mathfrak a.$
		\begin{align} \varphi_0(g)\leq& C e^{-\rho(a(g))}(1+\|a(g)\|)^{d}, \label{eq-basicMCbounds}\\
			\varphi_{w\rho}(g)=&1 \text{ for all }w\in W, g\in G,  \nonumber
		\end{align}
		\noindent where $C=C(G)$ is a uniform constant. For the first formula, keep in mind that the Iwasawa coordinates $H=H_{NAK}$ and $H_{KAN}$ relative to respectively the $NAK$ and $KAN$ decompositions satisfy $H_{NAK}(g)=-H_{KAN}(g^{-1})$. The second formula can be recovered from the integral representation in Theorem \ref{thm-ElmSph} for $\lambda=-\rho$ and $w=1$, and the fact that $\varphi_{w\lambda}=\varphi_{\lambda}$ for all $\lambda\in \mathfrak a^*_\mathbb C, w\in W$ \cite[Prop. 3.2.2]{gv88}. 
		\begin{lem}\label{lem-MCconvexity}
			The map $\lambda\mapsto \log \varphi_\lambda(g)$ is convex on $\mathfrak a^*.$
		\end{lem}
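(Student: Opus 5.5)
The convexity will come directly from the integral representation in Theorem~\ref{thm-ElmSph}: for real $\lambda\in\mathfrak a^*$ and fixed $g\in G$,
$$\varphi_\lambda(g)=\int_K e^{(\lambda+\rho)(H(kg))}\,dk .$$
For real $\lambda$ the integrand is strictly positive, so $\varphi_\lambda(g)>0$ and $\log\varphi_\lambda(g)$ is well defined. The map $\lambda\mapsto \varphi_\lambda(g)$ is therefore the Laplace transform, in the variable $\lambda$, of the pushforward measure $\mu_g:=H(\cdot\, g)_*(e^{\rho(H(kg))}dk)$ on $\mathfrak a$. That is,
$$\varphi_\lambda(g)=\int_{\mathfrak a} e^{\lambda(Y)}\,d\mu_g(Y).$$
The logarithm of such a Laplace transform is always convex, and that is the claim.

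To make this precise I would use H\"older's inequality. Take $\lambda_0,\lambda_1\in\mathfrak a^*$ and $t\in[0,1]$, and set $\lambda_t=(1-t)\lambda_0+t\lambda_1$. Then, pointwise in $k$,
$$e^{(\lambda_t+\rho)(H(kg))}=\big(e^{(\lambda_0+\rho)(H(kg))}\big)^{1-t}\big(e^{(\lambda_1+\rho)(H(kg))}\big)^{t}.$$
Apply H\"older with exponents $p=1/(1-t)$ and $q=1/t$ with respect to the probability measure $dk$. This gives
$$\varphi_{\lambda_t}(g)\le \varphi_{\lambda_0}(g)^{1-t}\varphi_{\lambda_1}(g)^{t}.$$
Taking logarithms yields $\log\varphi_{\lambda_t}(g)\le (1-t)\log\varphi_{\lambda_0}(g)+t\log\varphi_{\lambda_1}(g)$, which is the required convexity. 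The endpoint cases $t=0,1$ are trivial.

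The only point needing care is finiteness. Since $K$ is compact and $k\mapsto H(kg)$ is continuous, the integrand is bounded for fixed $g$. Hence all the integrals are finite and positive, and H\"older applies without issue. No step here is a real obstacle. The one subtlety is that the lemma concerns real $\lambda$, which is what makes the integrand positive; the formula itself holds on all of $\mathfrak a^*_{\mathbb C}$.
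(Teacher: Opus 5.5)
Your proof is correct. It uses the same input as the paper's proof, namely the integral representation from Theorem~\ref{thm-ElmSph} and positivity of the integrand for real $\lambda$. The difference is that you argue globally, whereas the paper argues infinitesimally. The paper differentiates twice under the integral sign in a direction $\xi\in\mathfrak a^*$. It writes $\frac{\partial^2}{\partial\xi\partial\xi}\log\varphi_\lambda(g)$ as $\varphi_\lambda(g)^{-2}$ times $\int_K e^{(\lambda+\rho)(H(kg))}dk\int_K\xi(H(kg))^2e^{(\lambda+\rho)(H(kg))}dk-\bigl(\int_K\xi(H(kg))e^{(\lambda+\rho)(H(kg))}dk\bigr)^2$, which is nonnegative by Cauchy--Schwarz; that is, the Hessian is the variance of $\xi(H(kg))$ under the tilted probability measure. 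Your H\"older argument proves the convexity inequality along segments directly. It therefore needs neither differentiation under the integral (harmless here, since $K$ is compact and $H$ is smooth) nor the fact that a $C^2$ function with nonnegative Hessian is convex. It also applies verbatim to the Laplace transform of an arbitrary positive measure, wherever that transform is finite. In exchange, the paper's route gives an explicit formula for the Hessian, which is the form you would want for quantitative convexity estimates. In Lemma~\ref{lem-rankonedecay} only the convexity inequality between $\lambda=0$ and $\lambda=\rho$ is used, so the two arguments are interchangeable there.
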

		\begin{proof}
			To prove our claim we just need to show that the Hessian of $\log \varphi_\lambda(g)$ is non-negative definite.  Let $\xi\in \mathfrak a^*\cong T_\lambda \mathfrak a^*$. By the Cauchy-Schwartz inequality,
			\begin{align*}\frac{\partial^2}{\partial \xi\partial \xi}\log \varphi_\lambda(g)=&\varphi_\lambda(g)^{-2}\left(\int_K e^{(\lambda+\rho)(H(kg))}dk\int_K \xi(H(kg))^2 e^{(\lambda+\rho)(H(kg))}dk\right.\\ & \left.-\int_K \xi(H(kg))e^{(\lambda +\rho)(H(kg))}dk\int_K \xi(H(kg))e^{(\lambda + \rho)(H(kg))}dk\right)\geq 0.
			\end{align*} 
		\end{proof}
		\subsection{Decay of matrix coefficients in rank one groups}\label{sec-decay}
		Throughout this section we let $G$ be a rank one simple real Lie group. Let $\alpha$ be the simple root. Then the set of positive roots is either $\{\alpha\}$ or $\{\alpha, 2\alpha\}.$ The multiplicities are listed in the table below, covering all isogeny classes of real rank one groups
		\begin{table}[ht]
			\centering
			\begin{tabular}{ c c c c c}
				$G$ & $X$ & $\dim X$ & $m_\alpha$ & $m_{2\alpha}$ \\ \hline
				$\SO(n,1)$ & $\mathbb H^n_\mathbb R$ & $n$ & $n-1$ & $0$\\
				$\SU(n,1)$ & $\mathbb H^n_\mathbb C$ & $2n$ & $2n-2$ & $1$\\
				$\Sp(n,1)$ & $\mathbb H^n_\mathbb H$ & $4n$ & $4n-4$ & $3$\\
				$F_4^{(-20)}$ & $\mathbb H^2_\mathbb O$ & $16$ & $8$ & $7$\\
			\end{tabular}
			\caption{Multiplicities of roots in rank one groups.}
			\label{tab-multiplicities}
		\end{table}
		\begin{thm}[{\cite{kostant1969existence}}] Let $G$ be a rank one simple real Lie group and let $(\pi,V)$ be a nontrivial irreducible unitary representation with $V^K\neq 0.$ Then, the infinitesimal character $\lambda$ satisfies 
			$$\left| \Re \frac{\langle \lambda,\alpha\rangle}{\langle \alpha, \alpha\rangle} \right|\leq \begin{cases} \frac{m_\alpha}{2}+1 & \text{if} \hspace{1mm} m_{2\alpha}\neq 0\\ \frac{m_\alpha}{2} & \text{ if }m_{2\alpha}=0 \end{cases}.$$
			Furthermore, if $\lambda\not\in i\mathfrak a^*$ then $\lambda\in \mathfrak a^*$ (it's either real or purely imaginary). 
		\end{thm}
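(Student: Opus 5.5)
The plan is to combine three constraints on $\lambda$ that come from the fact that $\varphi_\lambda(g)=\langle \pi(g)v,v\rangle$ is a normalized positive definite function: it is Hermitian, it is bounded by $1$, and it is positive definite. Since the rank is one, the Weyl group is $W=\{\pm 1\}$ acting on $\mathfrak a^*_\mathbb C\cong \mathbb C\alpha$, and by Theorem~\ref{thm-ElmSph} $\lambda$ is determined only up to sign.

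\emph{Step 1 (real or imaginary).} I will use the Hermitian symmetry $\varphi_\lambda(g^{-1})=\overline{\varphi_\lambda(g)}$. Since $g$ and $g^{-1}$ lie in the same double coset $K e^{\pm H}K$ and $-1\in W$, we have $\varphi_\lambda(g^{-1})=\varphi_\lambda(g)$. On the other hand, the integral formula gives $\overline{\varphi_\lambda}=\varphi_{\bar\lambda}$. Hence $\varphi_{\bar\lambda}=\varphi_\lambda$. By uniqueness of $\lambda$ up to $W$ in Theorem~\ref{thm-ElmSph}, this forces $\bar\lambda=\pm\lambda$, so $\lambda\in\mathfrak a^*$ or $\lambda\in i\mathfrak a^*$. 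This proves the second assertion.

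\emph{Step 2 (boundedness).} Next I use $|\varphi_\lambda|\le 1$. For real $\lambda$, take $\lambda$ in the dual chamber. Harish-Chandra's asymptotics along $A^+$ give
\[
\varphi_\lambda(e^{tH})\sim c(\lambda)e^{(\lambda-\rho)(tH)}
\]
with $c(\lambda)\neq 0$, or alternatively one can use convexity (Lemma~\ref{lem-MCconvexity}) together with $\varphi_{\pm\rho}=1$. Either way, boundedness forces $|\langle\lambda,\alpha\rangle|\le\langle\rho,\alpha\rangle$, that is,
\[
\left|\frac{\langle\lambda,\alpha\rangle}{\langle\alpha,\alpha\rangle}\right|\le \frac{m_\alpha}{2}+m_{2\alpha}.
\]
Equality holds only for the trivial representation. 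This already yields the claimed bound for $\SO(n,1)$, where $m_{2\alpha}=0$, and for $\SU(n,1)$, where $m_{2\alpha}=1$.

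\emph{Step 3 (positive definiteness; the main obstacle).} For $\Sp(n,1)$ and $F_4^{(-20)}$, where $m_{2\alpha}=3$ and $7$, boundedness is not enough, and this is where I expect the real work. The complementary series $\pi_\lambda$, $\lambda\in\mathfrak a^*$, is realized on the spherical principal series $\mathrm{Ind}_P^G(\lambda)$. Unitarizability is governed by the standard intertwining operator $A(\lambda)$, which acts on each $K$-type $\tau$ occurring in $L^2(K/M)$ by a scalar $a_\tau(\lambda)$. The irreducible unitary spherical representation with parameter $\lambda$ exists, and is nontrivial, exactly when the Hermitian form $\langle A(\lambda)f,f\rangle$ is positive on its $K$-types.

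The $K$-types of $L^2(K/M)$ are parametrized by pairs $(p,q)$. Here $K/M$ is a sphere, and the harmonics decompose under the $m_\alpha$ and $m_{2\alpha}$ directions. Using the known formula (Kostant, or Johnson–Wallach), the ratios $a_{\tau'}(\lambda)/a_\tau(\lambda)$ are explicit rational functions, essentially products of factors of the form
\[
\frac{\frac12(\tfrac{m_\alpha}{2}+m_{2\alpha}+\cdots)-\lambda+k}{\ \cdots+\lambda+k\ }.
\]
One then tracks the first sign change as $\lambda$ increases from $0$. This happens at a reducibility point, where a sub-$K$-type first splits off. In normalized units it is located at $\frac{m_\alpha}{2}+1$, coming from the $K$-type $(p,q)=(0,1)$ in the $2\alpha$-direction. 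Beyond that point the form becomes indefinite up to $\rho$, where only the trivial representation survives, and that case is excluded by nontriviality.

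Concretely, the first thing I would try is to compute the ratio $a_{(0,1)}(\lambda)/a_{(0,0)}(\lambda)$ directly via the Gindikin–Karpelevich $c$-function restricted to $K$-types. I expect this ratio to be proportional to $\bigl(\tfrac{m_\alpha}{2}+1-\lambda\bigr)/\bigl(\tfrac{m_\alpha}{2}+1+\lambda\bigr)$, up to positive factors. Its sign change then gives the stated bound.
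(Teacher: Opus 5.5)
Your Steps 1 and 2 are correct, and they use only tools already in the paper. The paper itself gives no proof of this statement; it simply cites Kostant.

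In particular, the convexity route in Step 2 works. For real $\lambda=c\rho$ with $c>1$, apply Lemma~\ref{lem-MCconvexity} to $\rho=(1-\tfrac1c)\cdot 0+\tfrac1c\lambda$. This gives $\log\varphi_\lambda(g)\geq -(c-1)\log\varphi_0(g)$, and the right side tends to $\infty$ by \eqref{eq-basicMCbounds}.

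However, write $\nu:=\langle\lambda,\alpha\rangle/\langle\alpha,\alpha\rangle$ and $\rho_0:=\frac{m_\alpha}{2}+m_{2\alpha}$. Steps 1 and 2 only yield $|\nu|<\rho_0$ for nontrivial $\pi$. That is the full statement only for $\SO(n,1)$ and $\SU(n,1)$. The paper uses the theorem (in Lemma~\ref{lem-rankonedecay}) exclusively for $\Sp(n,1)$ with $n>1$ and for $F_4^{(-20)}$. For those groups everything rests on Step 3, and Step 3 is a plan rather than a proof: the one computation carrying the content of Kostant's theorem is stated as an expectation.

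Concretely, two things are missing.

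\emph{The eigenvalue itself.} It is not located where your plan first looks. On the $K$-type adjacent to the trivial one, namely $\mathfrak s$ (the degree-one harmonics on $K/M$), the normalized ratio is $(\rho_0-\nu)/(\rho_0+\nu)$. This changes sign only at $\rho_0$. The factor $\frac{m_\alpha}{2}+1-\nu$ first appears at degree two, on the $K$-type of functions constant along the $m_{2\alpha}$-dimensional Hopf fibres of $K/M$. For $\Sp(n,1)$ its zonal function is $y\mapsto|\langle y,e\rangle_{\mathbb H}|^2-\frac1n$ on $S^{4n-1}$. On this $K$-type the ratio is $\frac{(\rho_0-\nu)(\frac{m_\alpha}{2}+1-\nu)}{(\rho_0+\nu)(\frac{m_\alpha}{2}+1+\nu)}$. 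You must either quote this from Johnson--Wallach's eigenvalue formula or compute it. For $\Sp(n,1)$ one way is a Funk--Hecke integral against the Knapp--Stein kernel $|1-\langle x,y\rangle_{\mathbb H}|^{-(\rho_0-\nu)}$. At $\nu=\frac{m_\alpha}{2}+1$ this kernel, restricted to the fibre ball $B^{1+m_{2\alpha}}$, becomes the Newtonian kernel. Its mean-value property is exactly what annihilates that zonal function.

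\emph{Non-unitarity on the Langlands quotient.} The conclusion must be drawn on $J(\nu)=\operatorname{im}A(\nu)$ for every $\nu\in(\frac{m_\alpha}{2}+1,\rho_0)$, not only where the principal series is irreducible. For $F_4^{(-20)}$ the spherical principal series is reducible at the interior points $\nu=7$ and $\nu=9$. There you must check that the degree-two $K$-type survives in $J(\nu)$ with a nonzero, hence negative, eigenvalue. It does, because the ratio above vanishes only at $\frac{m_\alpha}{2}+1$ and $\rho_0$.

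Only this necessary direction is needed. Positivity of the form below $\frac{m_\alpha}{2}+1$, and existence of the complementary series, play no role in the statement. With these two items supplied, your route (the standard Johnson--Wallach argument) proves the theorem. Without them, Step 3 merely restates what Kostant's theorem asserts.
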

		\begin{lem}\label{lem-rankonedecay} Let $G$ be either $\Sp(n,1)$ for $n>1$ or $F_4^{(-20)}$. Let $\varphi_\lambda$ be an elementary spherical function corresponding to a nontrivial unitary representation. Then
			$$| \varphi_\lambda(k_1e^Hk_2)|\leq C e^{(1-m_{2\alpha})\alpha(H)}(1+\|H\|), \text{ for } H\in \mathfrak a^+, k_1,k_2\in K.$$
		\end{lem}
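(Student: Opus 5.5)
Since $\varphi_\lambda$ is bi-$K$-invariant, it suffices to bound $|\varphi_\lambda(e^H)|$ for $H\in\mathfrak a^+$. We work with the real part of the infinitesimal character, using Kostant's theorem to confine it, Lemma \ref{lem-MCconvexity} to get a convexity bound, and \eqref{eq-basicMCbounds} together with $\varphi_{\pm\rho}\equiv 1$ as anchors.

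\textbf{Reduction to real characters.} From Theorem \ref{thm-ElmSph},
$$|\varphi_\lambda(e^H)|\le \int_K e^{(\Re\lambda+\rho)(H(ke^H))}dk=\varphi_{\Re\lambda}(e^H),$$
so we may assume $\lambda\in\mathfrak a^*$ is real. If $\lambda\in i\mathfrak a^*$, this reduces to $\varphi_0$. Otherwise, by Kostant's theorem $\lambda=t\alpha$ with $|t|\le \frac{m_\alpha}{2}+1$, using the normalization in which the bound reads $|\langle\lambda,\alpha\rangle|/\langle\alpha,\alpha\rangle\le m_\alpha/2+1$. Write $\rho=\frac{1}{2}(m_\alpha+2m_{2\alpha})\alpha=s_0\alpha$. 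For $\Sp(n,1)$ we have $s_0=2n+1$ and the Kostant bound is $2n-1$. For $F_4^{(-20)}$ we have $s_0=11$ and the bound is $5$. In both cases $|t|\le s_0-m_{2\alpha}+1=s_0-(m_{2\alpha}-1)$, so the admissible real characters lie strictly inside $[-\rho,\rho]$ with a gap of $m_{2\alpha}-1$ at each end.

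\textbf{Convex interpolation.} Since $\varphi_{w\lambda}=\varphi_\lambda$, we may take $t\ge 0$. Consider the convex function $u(s)=\log\varphi_{s\alpha}(e^H)$ on $[0,s_0]$. We have $u(s_0)=0$ because $\varphi_\rho\equiv 1$. By \eqref{eq-basicMCbounds}, with $d=2$ reduced positive roots in the non-reduced case (or $d=1$, depending on the convention),
$$u(0)\le \log C - s_0\,\alpha(H)+ 2\log(1+\|H\|).$$
Convexity (Lemma \ref{lem-MCconvexity}) gives $u(t)\le \bigl(1-\tfrac{t}{s_0}\bigr)u(0)+\tfrac{t}{s_0}u(s_0)$. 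At the worst case $t=s_0-(m_{2\alpha}-1)$ this yields
$$\varphi_\lambda(e^H)\le C'\,e^{-(m_{2\alpha}-1)\alpha(H)}(1+\|H\|)^{2(m_{2\alpha}-1)/s_0},$$
and the power of $(1+\|H\|)$ is at most $1$ here because $2\cdot 6/11<1$ and $2\cdot 2/(2n+1)<1$. Smaller $t$ gives a smaller bound, again by convexity: $u(t)$ lies below the chord from $u(0)$ to $u(s_0)$, and the chord's bound on the logarithm grows with $t$. The imaginary case reduces to $\varphi_0$, which is even better. This gives the exponent $(1-m_{2\alpha})\alpha(H)$ claimed in the statement.

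\textbf{Main obstacle.} The delicate point is the bookkeeping of normalizations: matching $\langle\lambda,\alpha\rangle/\langle\alpha,\alpha\rangle$ in Kostant's theorem with the coefficient $t$ in $\lambda=t\alpha$, and fixing the exponent $d$ and the polynomial factor in \eqref{eq-basicMCbounds}. If the normalizations turn out to be off by a factor, the convexity argument still goes through as written. The only change is that the gap $s_0-|t|$ must be recomputed, and it must still equal at least $m_{2\alpha}-1$. The argument also needs $n>1$, which is exactly what makes the Kostant bound fall strictly below $\rho$ for $\Sp(n,1)$.
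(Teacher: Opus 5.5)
Your argument is the same as the paper's. You reduce to $k_1=k_2=1$, bound the imaginary case by $\varphi_0$, and for real $\lambda=t\alpha$ with $0\le t\le \frac{m_\alpha}{2}+1=s_0-(m_{2\alpha}-1)$ you interpolate $\log\varphi_\lambda(e^H)$ convexly between $\varphi_0$ and $\varphi_\rho\equiv 1$ using Lemma \ref{lem-MCconvexity}.

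There is one arithmetic slip to fix. $2\cdot 6/11=12/11>1$, so under your hedge $d=2$ the $F_4^{(-20)}$ case would leave a factor $(1+\|H\|)^{12/11}$ at the endpoint $t=5$. That does not give the stated bound, since the exponential rate there is exactly $(1-m_{2\alpha})\alpha(H)$.

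The hedge is unnecessary, though. In the paper's convention $d$ counts reduced positive roots, and $2\alpha$ is not reduced, so $d=1$ in rank one. The polynomial term then carries the coefficient $1-t/s_0\le 1$. As in the paper's proof, this gives $(1+\|H\|)^1$ uniformly in $t$, so you need neither the worst-case reduction nor the case-by-case exponent check.

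A minor remark on your closing paragraph: for $n=1$ the inequality $\frac{m_\alpha}{2}+1<s_0$ still holds ($1<3$). The real reason $n>1$ is needed is different. When $m_\alpha=0$, $\Sp(1,1)$ is locally $\SO(4,1)$, and the non-reduced case of Kostant's theorem does not apply to it.
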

		\begin{proof}
			The function is spherical, so we might as well assume $k_1=k_2=1$. If $\lambda\in i\mathfrak a^*$ then by (\ref{eq-basicMCbounds})
			$$|\varphi_\lambda(e^H)|\leq |\varphi_0(e^H)|\leq C e^{-\rho(H)}(1+\|H\|)\leq C e^{(1-m_{2\alpha})\alpha(H)}(1+\|H\|).$$
			
			Otherwise, by Kostant's theorem, $\lambda= t_1\alpha,$ with $|t_1|\leq \frac{m_\alpha}{2}+1.$ Applying the nontrivial Weyl group element to $\lambda$ (which in rank one case is the multiplication by $-1$) does not change the spherical function, so we can assume $t_1\geq 0$. We have $\lambda=\frac{(\rho-\lambda)(H)}{\rho(H)}0+\frac{\lambda(H)}{\rho(H)}\rho$, for any non-zero $H\in \mathfrak a$. Lemma \ref{lem-MCconvexity} yields
			\begin{align*}
				\log|\varphi_\lambda(e^H)|\leq& \frac{\lambda(H)}{\rho(H)}\log |\varphi_\rho(e^H)|+\frac{(\rho-\lambda)(H)}{\rho(H)}\log |\varphi_0(e^H)|\\
				\leq& \frac{(\rho-\lambda)(H)}{\rho(H)}\log(Ce^{-\rho(H)}(1+\|H\|))\\
				\leq& (\lambda-\rho)(H)+ \log (1+\|H\|) + \log C .
			\end{align*}
			We have $\rho=(\frac{m_\alpha}{2}+m_{2\alpha})\alpha,$ so $(\lambda-\rho)(H)\leq (1-m_{2\alpha})\alpha(H).$
		\end{proof} 
	
	\subsection{Decay of matrix coefficients for \iffalse $\SL(n,\mathbb{R})$ and\fi  simple higher rank groups }\label{sec-decayhigherrank}

	We state now the result on decay of matrix coefficients due to Oh that we will need.  This is analogous to the work by Kostant above. 
    A subset $\Theta$ of roots in $\Phi^+$ is strongly orthogonal if $\pm\alpha\pm\beta\not\in\Phi$ for all distinct $\alpha,\beta\in \Theta.$ Below we list these maximal strongly orthogonal subsets for the classical root systems of types $A_n,B_n$, $C_n$, and $D_n$. We express the roots in terms of the standard orthonormal basis $\{e_1, e_2, \dots, e_n\}$ of $\mathbb{R}^n$. Recall that $\SL(n,\mathbb R)$ is the split real Lie group with root system $A_{n-1}.$ Let $\theta$ be the half sum of the positive roots in a maximal strongly orthogonal subset $\Theta$.
    
    \textbf{Type $A_{n-1}$} In the case of $\SL(n,\mathbb{R})$, we have $\Phi=\{e_i-e_j\mid i\neq j=1,\ldots,n\}$. Write $\alpha_{i,j}:=e_i-e_j.$ A maximal strongly orthogonal subset is given by $\Theta=\{e_i-e_{n+1-i}\mid i=1,\ldots \lfloor n/2\rfloor\}.$ The half-sum is 
\[
2\theta= \sum_{i=1}^{\lfloor n/2 \rfloor} \alpha_{i,n+1-i}= e_1 +.. + e_{\lfloor n/2 \rfloor} - e_{\lceil n/2 \rceil +1 }-..-e_n. 
\]

\textbf{Type $B_n$}
The root system of type $B_n$ (corresponding to the orthogonal group $\SO(2n+1)$) is given by:
\[ \Phi = \{\pm e_i \pm e_j \mid 1 \leq i < j \leq n\} \cup \{\pm e_i \mid 1 \leq i \leq n\} \]
The maximal strongly orthogonal subset $\Theta$ we choose depends on the parity of $n$. If $n = 2k$ is even: 
    \[ \Theta = \{ e_1 - e_2, e_1 + e_2, e_3 - e_4, e_3 + e_4, \dots, e_{2k-1} - e_{2k}, e_{2k-1} + e_{2k} \} \]
If $n = 2k+1$ is odd:
    \[ \Theta = \{ e_1 - e_2, e_1 + e_2, e_3 - e_4, e_3 + e_4, \dots, e_{2k-1} - e_{2k}, e_{2k-1} + e_{2k}, e_{2k+1} \} \]
In both cases, the size of the maximal strongly orthogonal subset is $n$ and the half-sum of $\Theta$ is:
\[
\theta=
\begin{cases}
\displaystyle\sum_{j=1}^{k} e_{2j-1}, & n=2k,\\[6pt]
\displaystyle\sum_{j=1}^{k} e_{2j-1}+\frac12 e_{2k+1}, & n=2k+1.
\end{cases}
\]

\textbf{Type $C_n$}
The root system of type $C_n$ (corresponding to the symplectic group $\Sp(2n)$) is given by:
\[ \Phi = \{\pm e_i \pm e_j \mid 1 \leq i < j \leq n\} \cup \{\pm 2e_i \mid 1 \leq i \leq n\} \]
A maximal strongly orthogonal subset $\Theta$ is given by the long roots:
\[ \Theta = \{ 2e_1, 2e_2, 2e_3, \dots, 2e_n \} \]
The size of this maximal strongly orthogonal subset is $n$ and the half-sum of $\Theta$ is:
\[ \theta= \sum_{i=1}^{n} e_i\]

\textbf{Type $D_n$}
The root system of type $D_n$ (corresponding to the orthogonal group $\SO(2n)$) is given by:
\[ \Phi = \{\pm e_i \pm e_j \mid 1 \leq i < j \leq n\} \]
A maximal strongly orthogonal subset $\Theta$ is constructed by taking disjoint pairs:
\[ \Theta = \{ e_1 - e_2, e_1 + e_2, e_3 - e_4, e_3 + e_4, \dots, e_{2k-1} - e_{2k}, e_{2k-1} + e_{2k} \} \]
where $k = \lfloor n/2 \rfloor$. The half-sum of $\Theta$ is 
\[ \theta=\sum_{j=1}^{k} e_{2j-1}.\]

Oh proved the following theorem \cite[Theorem 1.1, 4.10, 4.11]{oh02} :  
	
	\begin{thm} \label{thm-decayslnr}
	Let $G$ be a non-compact simple real Lie group and $K$ be a maximal compact subgroup of $G$. Suppose that $\rank(G) \geq 2$. Then for any mean-zero $K$-invariant function $f \in L^2( \Gamma \backslash G)$, any $v \in \mathfrak{a}^+$, and any $\delta>0$, 
	
	\begin{equation} 
		|\langle (\text{exp } v) f, f \rangle| \leq d_{\delta} e^{-(1-\delta)\theta(v)} ||f||_2^2, 
		\end{equation} 
	where $d_{\delta}$ depends only on $\delta$.  
		\end{thm}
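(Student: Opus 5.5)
The statement is Oh's uniform bound. I would reproduce her strategy: restrict the representation to a product of commuting rank-one subgroups attached to the strongly orthogonal set $\Theta$, and control the matrix coefficients there through temperedness.

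\textbf{Reduction to irreducibles and absence of invariants.} First I would decompose $L^2_0(\Gamma\backslash G)$, the mean-zero part, as a direct integral $\int^\oplus \pi_\zeta\,d\mu(\zeta)$ of irreducible unitary representations. Since $\Gamma$ is a lattice and $f$ has mean zero, almost every $\pi_\zeta$ is nontrivial. The function $\langle (\exp v)f,f\rangle$ is then an average of spherical matrix coefficients of $K$-fixed unit vectors, weighted by $\|f_\zeta\|^2$. It therefore suffices to prove $|\varphi_\lambda(\exp v)|\le d_\delta e^{-(1-\delta)\theta(v)}$ uniformly over all elementary spherical functions $\varphi_\lambda$ of nontrivial irreducible unitary representations of $G$. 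By Howe--Moore, such a representation has no nonzero vectors invariant under any noncompact closed subgroup of the simple group $G$.

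\textbf{Temperedness on the $\Theta$-subgroups.} For each $\alpha\in\Theta$ I would take the rank-one subgroup $H_\alpha$ generated by $\mathfrak g_{\pm\alpha}$, which is a copy of $\SL(2,\mathbb R)$ up to cover. I would embed it in a semidirect product $H_\alpha\ltimes V_\alpha$, where $V_\alpha$ is a unipotent subgroup built from root spaces $\mathfrak g_\beta$ with $\beta\pm\alpha$ roots. The point is that $H_\alpha$ acts on $V_\alpha$ (or on the Heisenberg quotient) through a nontrivial representation.

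Since $\pi$ has no $V_\alpha$-invariant vectors, Mackey theory gives that the spectral measure of $\pi|_{V_\alpha}$ lives on $\widehat{V_\alpha}\setminus\{0\}$. The stabilizers of $H_\alpha$ on that set are amenable (unipotent or compact), so $\pi|_{H_\alpha}$ is weakly contained in the regular representation, i.e. tempered.

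Strong orthogonality makes the $H_\alpha$, $\alpha\in\Theta$, pairwise commuting. I would then run the same argument for the product $H_\Theta=\prod_{\alpha\in\Theta}H_\alpha$. The aim is to show that $\pi|_{H_\Theta}$ is weakly contained in a representation that is a tensor product of tempered representations on each factor. Oh handles this via a joint Mackey/induction argument, using that each $V_\alpha$ can be chosen normalized by the other factors.

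\textbf{Coefficient bound and passage to general $v$.} By Cowling--Haagerup--Howe, $K_\Theta$-invariant coefficients of such representations are bounded by $\prod_{\alpha\in\Theta}\Xi_{H_\alpha}$. Here $\Xi_{H_\alpha}(\exp tH_\alpha)\lesssim (1+|t|)e^{-|t|}$ on the coroot direction, and $f$ is $K_\Theta=K\cap H_\Theta$-invariant. This yields $|\varphi_\lambda(\exp v)|\le C e^{-\theta(v)}(1+\|v\|)^{|\Theta|}$ for $v$ in the span of the coroots of $\Theta$.

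For a general $v\in\mathfrak a^+$, I would write $v=v_\Theta+v'$ with $v'$ orthogonal to the coroots of $\Theta$, so that $\theta(v)=\theta(v_\Theta)$. I then need to discard $v'$. I would try to use $K$-bi-invariance together with the convexity of $\lambda\mapsto\log\varphi_\lambda$ from Lemma \ref{lem-MCconvexity} and the Cartan projection. Failing that, I would follow Oh and choose $\Theta$ via Weyl conjugates adapted to the chamber, so that the coefficient at $\exp v$ is dominated by the one at $\exp v_\Theta$. Finally, the polynomial factor is absorbed into $e^{\delta\theta(v)}$ by adjusting $d_\delta$; this is harmless because $\theta$ is positive on $\mathfrak a^{++}$ for the listed $\Theta$.

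\textbf{Main obstacle.} I expect the joint temperedness on the product $H_\Theta$ to be the hardest step. Temperedness on each factor separately does not by itself give the product bound, so one needs the careful choice of the unipotent groups $V_\alpha$ and the Mackey analysis done simultaneously for all $\alpha\in\Theta$. This is where the explicit root data listed above for types $A$–$D$ enters.
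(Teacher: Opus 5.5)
The paper does not prove this statement; it quotes Oh's theorem. Your architecture is the right kind: Howe--Moore, a Mackey argument on $H_\alpha\ltimes V_\alpha$, passage to the product $H_\Theta$, then Cowling--Haagerup--Howe. As written, however, the passage to general $v$ fails, the product step is misdiagnosed, and the per-root temperedness is only asserted.

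\textbf{Passage to general $v$.} The step from $v$ in the span of the $\Theta$-coroots to general $v\in\mathfrak a^+$ does not work by the routes you suggest.
\begin{itemize}
\item Lemma \ref{lem-MCconvexity} is convexity in the spectral parameter, not in the group variable, so it cannot remove $v'$. It gives decay only once the location of $\lambda$ is known (as Kostant's theorem supplies in rank one), and that is essentially what is being proved.
\item You give no reason why the coefficient at $\exp v$ should be dominated by the one at $\exp v_\Theta$. No chamber-dependent choice of $\Theta$ is needed.
\end{itemize}
The missing observation is simple. Since $\alpha(v')=0$ for all $\alpha\in\Theta$, $\mathrm{Ad}(\exp v')$ is trivial on each $\mathfrak g_{\pm\alpha}$, so $\exp v'$ centralizes $H_\Theta$. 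Hence $u:=(\exp v')f$ is again $K_\Theta$-invariant with $\|u\|=\|f\|$. Then $\langle(\exp v)f,f\rangle=\langle(\exp v_\Theta)u,f\rangle$ is a $K_\Theta$-invariant coefficient of $L^2_0(\Gamma\bs G)|_{H_\Theta}$ at a point of $A_\Theta$. Once that restriction is tempered, Cowling--Haagerup--Howe bounds it by $\prod_{\alpha\in\Theta}\Xi_{H_\alpha}$ at the components of $\exp v_\Theta$, which is $\ll\prod_{\alpha\in\Theta}(1+\alpha(v))e^{-\alpha(v)/2}\|f\|^2$. This works with one fixed $\Theta$ for every $v$. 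The polynomial factor is absorbed into $e^{\delta\theta(v)}$ factor by factor, because $\alpha(v)\geq 0$.

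\textbf{The product step is routine.} The $H_\alpha$ are type I, so $\pi|_{H_\Theta}=\int^\oplus\bigotimes_{\alpha\in\Theta}\sigma_{\alpha,\zeta}\,d\mu(\zeta)$, and $\pi|_{H_\alpha}$ is a direct integral of multiples of the $\sigma_{\alpha,\zeta}$. Almost every component of a direct integral is weakly contained in it, so almost every $\sigma_{\alpha,\zeta}$ is tempered. Hence almost every $\bigotimes_\alpha\sigma_{\alpha,\zeta}$ is tempered, and so is $\pi|_{H_\Theta}$. Factorwise temperedness therefore does suffice, contrary to your claim, and no simultaneous Mackey analysis is needed.

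\textbf{The real content is per-root temperedness, which you only assert.}
\begin{itemize}
\item Abelian $V_\alpha$ with amenable stabilizers do exist in many cases. For example, in the simply laced case take $V_\alpha\cong\mathbb R^2$ spanned by $\mathfrak g_\beta,\mathfrak g_{\alpha+\beta}$; this gives $\SL_2\ltimes\mathbb R^2$ with unipotent stabilizers.
\item For the long roots $2e_i$ of type $C_n$ this fails. Any unipotent group built from root spaces on which $H_{2e_i}$ acts nontrivially is non-abelian, and $H_{2e_i}$ acts trivially on its commutator subgroup. Mackey theory then produces tensor products $\omega\otimes\tau$ with $\omega$ an oscillator representation.
\item In that case temperedness has to come from the decay of the oscillator factor, not from amenability of stabilizers.
\end{itemize}
This case-by-case analysis is where the substance of Oh's proof lies, and your sketch does not supply it.
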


	\section{Hessians in symmetric spaces}\label{sec-Hess}

Let $F$ be a $C^2$, $K$-invariant function on $X=G/K$, and write
$F(ke^HK)=\varphi(H)$. At a regular point $x=ke^HK$, left translation
by $ke^H$ identifies the orthogonal decomposition
\[
 \mathfrak{s}=\mathfrak{a}\oplus
 \bigoplus_{\alpha\in\Phi^+}\mathfrak{s}_\alpha,
 \qquad
 \mathfrak{s}_\alpha=(\mathfrak{g}_\alpha\oplus
 \mathfrak{g}_{-\alpha})\cap\mathfrak{s},
\]
with an orthogonal decomposition of $T_xX$. Denote the image of
$\xi\in\mathfrak{a}$ by $U_\xi$, and choose an orthonormal basis
$V_{\alpha,1},\ldots,V_{\alpha,m_\alpha}$ in each root summand.
As before, $H_\alpha$ is the metric dual of $\alpha$:
$\langle H_\alpha,\xi\rangle=\alpha(\xi)$.

\begin{prop}\label{prop-CartanHess}
For $H\in\mathfrak{a}^{++}$, the Hessian of $F$ at $ke^HK$ is given by
\begin{align*}
 \operatorname{Hess}F(U_\xi,U_\eta)
   &=D^2\varphi(H)(\xi,\eta),\\
 \operatorname{Hess}F(U_\xi,V_{\alpha,i})&=0,\\
 \operatorname{Hess}F(V_{\alpha,i},V_{\beta,j})
   &=\delta_{\alpha\beta}\delta_{ij}\,
     \coth(\alpha(H))\,
     \langle\nabla\varphi(H),H_\alpha\rangle.
\end{align*}
\end{prop}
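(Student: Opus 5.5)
The plan is to compute everything at one point of a maximal flat, using two ingredients: the flat is totally geodesic, and Killing fields coming from $\mathfrak k$ annihilate $F$.

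\textbf{Reduction to $k=1$.} Left translation by $k$ is an isometry preserving $F$. It carries the frame $U_\xi, V_{\alpha,i}$ at $e^HK$ to the corresponding frame at $ke^HK$. So I may assume $x=e^HK$ with $H\in\mathfrak a^{++}$.

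\textbf{Flat block.} The orbit $\{e^{H'}K : H'\in\mathfrak a\}$ is a totally geodesic flat. The curves $t\mapsto e^{H+t\xi}K$ are geodesics with initial velocity $U_\xi$, since left translation by $e^H$ maps $\exp(t\xi)K$ to them. Hence $\operatorname{Hess}F(U_\xi,U_\xi)=\frac{d^2}{dt^2}\varphi(H+t\xi)|_{t=0}$, and polarization gives $D^2\varphi(H)(\xi,\eta)$. The same computation shows $\grad F(x)=U_{\nabla\varphi(H)}$. The component of $\grad F$ tangent to the flat is $U_{\nabla\varphi(H)}$. The root directions are tangent to the $K$-orbit, on which $F$ is constant, so $\grad F$ has no component there.

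\textbf{Root block via Killing fields.} For a unit $Y\in\mathfrak g_\alpha$, let $Z=Y+\Theta Y\in\mathfrak k$, and let $Z^*$ be the Killing field of $s\mapsto\exp(sZ)$ acting on the left. At $e^{H'}K$,
\[
Z^*(e^{H'}K)=e^{H'}\cdot\mathrm{pr}_{\mathfrak s}\bigl(\mathrm{Ad}(e^{-H'})Z\bigr)=-c\,\sinh(\alpha(H'))\,e^{H'}V,
\]
where $V\in\mathfrak s_\alpha$ is the unit vector proportional to $Y-\Theta Y$ and $c$ is a normalization constant. Choosing the $Y$'s suitably, these fields realize the basis $V_{\alpha,i}$ at all points of the flat at once. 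They are nonzero at $x$ because $H$ is regular.

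Since $F$ is $K$-invariant, $Z^*F\equiv 0$. Therefore
\[
\operatorname{Hess}F(Z^*,W)=W(Z^*F)-(\nabla_WZ^*)F=-\langle\nabla_WZ^*,\grad F\rangle .
\]
\begin{itemize}
\item Take $W=Z^*$. The Killing identity $\nabla_{Z^*}Z^*=-\tfrac12\grad|Z^*|^2$ gives $\operatorname{Hess}F(Z^*,Z^*)=\tfrac12\,\grad F\bigl(|Z^*|^2\bigr)$. Because $\grad F$ is tangent to the flat, this is the derivative of $c^2\sinh^2(\alpha(H'))$ in the direction $\nabla\varphi$. That derivative equals $c^2\sinh(\alpha(H))\cosh(\alpha(H))\langle\nabla\varphi(H),H_\alpha\rangle$. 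Dividing by $|Z^*|^2=c^2\sinh^2\alpha(H)$ gives the $\coth$ formula.
\item Polarizing with $Z_1+Z_2$ for two distinct basis elements gives the off-diagonal terms. The cross term $\langle Z_1^*,Z_2^*\rangle$ vanishes identically along the flat, because the vectors $e^{H'}V_{\alpha,i}$ are pairwise orthogonal. So those entries are $0$.
\item Take $W=U_\xi$. Skew-symmetry of $\nabla Z^*$ gives $\operatorname{Hess}F(Z^*,U_\xi)=\langle\nabla_{\grad F}Z^*,U_\xi\rangle$. Along the flat, the fields $e^{H'}V$ are parallel, since transvections $e^{t\xi}$ realize parallel transport along flat geodesics. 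Hence $\nabla_{\grad F}Z^*$ is a multiple of $V$, which is orthogonal to $U_\xi$, and the mixed entries vanish.
\end{itemize}

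\textbf{Main obstacle.} The step needing the most care is the explicit value of $Z^*$ along the whole flat, including the sign and normalization. In particular I need that the identification of $T_{e^{H'}K}X$ with $\mathfrak s$ via $e^{H'}$ is compatible with parallel transport along the flat. I would check this using $\mathrm{Ad}(e^{-H'})Y=e^{-\alpha(H')}Y$ and the standard fact that left translation by $e^{t\xi}$ is a transvection along the geodesic $e^{t\xi}K$.
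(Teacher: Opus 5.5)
Your proof is correct, but it takes a different route from the paper.

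\textbf{The paper's route.} The paper passes to polar coordinates $K/M\times\mathfrak a^{++}\to X^{++}$ and quotes Helgason for the warped-product form $g=dH^2+\sum_{\alpha}\sinh^2(\alpha(H))\,q_\alpha$ of the metric. It then reads off all three blocks at once from $2\operatorname{Hess}F=\mathcal L_{\nabla F}g$, using only that $\nabla F=\nabla\varphi(H)$ is horizontal and depends only on $H$:
\begin{itemize}
\item the horizontal block is $D^2\varphi$;
\item the mixed block vanishes because $g$ has no cross terms;
\item the angular block is $\tfrac12\nabla\varphi(H)[\sinh^2(\alpha(H))]\,q_\alpha$.
\end{itemize}

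\textbf{Your route.} You work at one point of the flat with the Killing fields $Z^*$, $Z=Y+\Theta Y$, and the identity $2\operatorname{Hess}F(Z^*,Z^*)=\grad F(|Z^*|^2)$.

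\textbf{How they relate.} The two computations are essentially the same in different language. The angular coordinate fields of the polar map are your $Z^*$. The coefficient $\sinh^2(\alpha(H))$ is exactly $|Z^*|^2$ along the flat; you derive this directly from the action of $\mathrm{Ad}(e^{-H'})$ on $\mathfrak g_{\pm\alpha}$ instead of citing it. Since $Z^*$ is a Killing field preserving $F$, one has $[\nabla F,Z^*]=0$. Hence $(\mathcal L_{\nabla F}g)(Z^*,Z^*)=\nabla F(|Z^*|^2)$, which is your Killing identity.

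\textbf{Trade-offs.} Your version is self-contained and makes the role of $K$-invariance explicit. The price is a separate argument for the mixed block, which the paper gets for free from the block-diagonal form of $g$. Your transvection argument works. Alternatively, $Z^*$ is normal to the totally geodesic flat along it, so $\langle\nabla_{\nabla F}Z^*,U_\xi\rangle=-\langle Z^*,\nabla_{\nabla F}U_\xi\rangle=0$, with no parallel-transport fact needed.

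\textbf{Wording slip.} The derivative of $c^2\sinh^2(\alpha(H'))$ in the direction $\nabla\varphi(H)$ is $2c^2\sinh(\alpha(H))\cosh(\alpha(H))\langle\nabla\varphi(H),H_\alpha\rangle$. It is the factor $\tfrac12$ from the Killing identity that brings it to the value you wrote. Your final $\coth$ formula is right, and the sign and the constant $c$ in $Z^*$ play no role.
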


\begin{proof}
Put $M=Z_K(\mathfrak{a})$. In polar coordinates
$K/M\times\mathfrak{a}^{++}\longrightarrow X^{++}$,
$(kM,H)\mapsto ke^HK$, the metric in polar coordinates is
\[
g=dH^2+\sum_{\alpha\in\Phi^+}\sinh^2(\alpha(H))\,q_\alpha;
\] where $q_\alpha$ is an $H$-independent quadratic form on the angular
root summand, extended by zero on the other summands. For the proof we refer to the computation\footnote{We also derived these formulas directly from the definition of Levi-Civita connection in the earlier version of the manuscript \cite{fl24}} of the differential of the polar map in \cite[Chapter I, proof of Theorem 5.8]{helgason2022groups}.

Since $\nabla F=\nabla\varphi(H)$ is horizontal, the identity
$2\operatorname{Hess}F=\mathcal L_{\nabla F}g$ gives the horizontal
block $D^2\varphi$, zero mixed blocks, and angular blocks
\[
 \frac12\nabla\varphi(H)
       \bigl[\sinh^2(\alpha(H))\bigr]q_\alpha
 =\alpha(\nabla\varphi(H))\coth(\alpha(H))\,
       g\big|_{\mathfrak{s}_\alpha}.
\]
Evaluating in the stated orthonormal bases proves the formulas.
\end{proof}

We suppress the multiplicity index when $G$ is split; equivalently,
the vectors $V_\alpha$ may be indexed by the multiset
$\widetilde\Phi^+$ used above. At singular points, the Hessian is
obtained by continuity.

		\section{Monotonicity estimates in rank one symmetric spaces}\label{sec-mono}
	
	In Section \ref{sec-varifolds}, we have fixed a smooth non-increasing function $\chi\colon \mathbb R\to \mathbb R_{\geq 0}$ such that $\chi(t)=1$ for $t<0$, $\chi(t)=0$ for $t>1$ and $\chi'(t)\geq -2$ for all $t\in\mathbb R$. We have also defined $\chi_r(t):=\chi(t-r).$ We keep this choice and notation fixed in the remainder of the paper.
	
	Let $G$ be a rank one simple real Lie group. We keep the notations from Section \ref{sec-symspaces} and \ref{sec-decay}.  
	Let  $\kappa\colon \{1,\ldots, \dim X-1\}\to \mathbb R_{\geq 0}$, be given as
	\begin{table}[ht]
		\centering
\begin{tabular}{ll|c|c}
    & & $X$ & $\dim X$ \\ \hline
    $\kappa(k)=k-1$
    & $k=1,\ldots,n-1$
    & $\mathbb H_{\mathbb R}^n$ & $n$ \\

    $\kappa(k)=k-1$
    & $k=1,\ldots,2n-1$
    & $\mathbb H_{\mathbb C}^n$ & $2n$ \\

    $\kappa(k)=\begin{cases}
        k-1\\
        4n-4+2i
    \end{cases}$
    & $\begin{array}{l}
        k=1,\ldots,4n-3\\
        k=4n-3+i,\quad i=1,2
    \end{array}$
    & $\mathbb H_{\mathbb H}^n$ & $4n$ \\

    $\kappa(k)=\begin{cases}
        k-1\\
        8+2i
    \end{cases}$
    & $\begin{array}{l}
        k=1,\ldots,9\\
        k=9+i,\quad i=1,\ldots,6
    \end{array}$
    & $\mathbb H_{\mathbb O}^2$ & $16$ \\
\end{tabular}

		\caption{$\kappa$ in rank one groups.}
		\label{tab-kappa}
	\end{table}
	
	We define a spherical function $\ff\colon G\to \mathbb R_{\geq 0}$ by 
	\begin{equation}\label{defn-ffdef}
		\ff(g):= \frac{1}{2\|\alpha\|}\log (2\cosh(2\alpha(a(g)))), 
	\end{equation} 
	where $\alpha\in \Phi^+$ is the simple root. In what follows we assume that the metric on octonionic hyperbolic space was chosen so that $0<f(e)= \log 2/ (2 ||\alpha||)<1$, so that $\chi_0(f)$ is not everywhere vanishing.   This function is a smooth approximation to the distance to the origin $K\in X$, which is given by 
	$$d(gK,K)=\|a(g)\|=\frac{\alpha(a(g))}{\|\alpha\|}.$$
	
	\begin{lem}\label{lem-HessianRk1}
		For any $x\in X$, any $k$-dimensional subspace $W\subset T_x X,$ we have 
		$$ \tr \Hess \ff|_W\geq \kappa(k)\|\alpha\|.$$
	\end{lem}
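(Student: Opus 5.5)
The plan is to use that $\ff$ is $K$-invariant and read off its Hessian eigenvalues from Proposition \ref{prop-CartanHess}. The trace of a symmetric form on a $k$-dimensional subspace $W$ is at least the sum of its $k$ smallest eigenvalues (Ky Fan / Courant--Fischer). So it suffices to list the eigenvalues of $\Hess\ff$ at $ke^HK$ with $H\in\mathfrak a^{++}$, bound them below uniformly in $H$, and compare with Table \ref{tab-kappa}. At the singular point $H=0$ the bound follows by continuity of the Hessian, since the inequality is closed.

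\textbf{Computing the eigenvalues.} Write $H=rH_1$ with $H_1$ the unit vector in $\mathfrak a^+$, and set $u=\alpha(H)=\|\alpha\|r$. Then $\varphi(H)=\frac{1}{2\|\alpha\|}\log(2\cosh 2u)$, so
\[
\partial_r\varphi=\tanh(2u), \qquad \partial_r^2\varphi=2\|\alpha\|\operatorname{sech}^2(2u)\geq 0 .
\]
This gives the radial eigenvalue, and it is nonnegative. Next, $\nabla\varphi=\tanh(2u)H_1$, so $\langle\nabla\varphi,H_\alpha\rangle=\|\alpha\|\tanh(2u)$ and $\langle\nabla\varphi,H_{2\alpha}\rangle=2\|\alpha\|\tanh(2u)$. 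By Proposition \ref{prop-CartanHess} we get two more families of eigenvalues.
\begin{itemize}
\item On $\mathfrak s_\alpha$ (multiplicity $m_\alpha$) the eigenvalue is
\[
\|\alpha\|\coth(u)\tanh(2u)=\|\alpha\|\frac{2\cosh^2u}{\cosh^2u+\sinh^2u}\geq\|\alpha\|.
\]
\item On $\mathfrak s_{2\alpha}$ (multiplicity $m_{2\alpha}$) the eigenvalue is $2\|\alpha\|\coth(2u)\tanh(2u)=2\|\alpha\|$.
\end{itemize}

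\textbf{Summing the smallest eigenvalues.} In the worst case the $k$ smallest eigenvalues are the radial one ($\geq 0$), then up to $m_\alpha$ eigenvalues each $\geq\|\alpha\|$, then eigenvalues equal to $2\|\alpha\|$. Hence
\[
\tr\Hess\ff|_W\geq \|\alpha\|\bigl(\min(k-1,m_\alpha)+2\max(0,k-1-m_\alpha)\bigr).
\]
We check this against Table \ref{tab-kappa} using Table \ref{tab-multiplicities}.
\begin{itemize}
\item For $\mathbb H^n_{\mathbb R}$ and $\mathbb H^n_{\mathbb C}$, every nonradial eigenvalue is $\geq\|\alpha\|$, which gives $k-1$.
\item For $\mathbb H^n_{\mathbb H}$ we have $m_\alpha=4n-4$. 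If $k\le 4n-3$ we get $k-1$. If $k=4n-3+i$ we get $(4n-4)+2i$.
\item For $\mathbb H^2_{\mathbb O}$ we have $m_\alpha=8$. If $k\le 9$ we get $k-1$. If $k=9+i$ we get $8+2i$.
\end{itemize}
These match $\kappa(k)\|\alpha\|$ in every case.

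\textbf{Main obstacle.} The only delicate step is the uniform lower bound $\coth(u)\tanh(2u)\ge 1$ for all $u>0$. The specific choice of $\ff$, with $\cosh(2\alpha)$ in place of the distance function, is made precisely so that this bound holds while $\ff$ stays smooth at the origin.
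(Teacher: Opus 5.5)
Your proposal is correct and follows essentially the same route as the paper. Both arguments read off the eigenvalues of $\Hess\ff$ from Proposition \ref{prop-CartanHess}: the radial one is $\geq 0$, those on $\mathfrak s_\alpha$ are $\geq\|\alpha\|$, and those on $\mathfrak s_{2\alpha}$ are exactly $2\|\alpha\|$. Both then bound $\tr\Hess\ff|_W$ below by the sum of the $k$ smallest eigenvalues and check the count against Table \ref{tab-kappa} case by case. Your explicit continuity argument at the origin fills in a point the paper leaves to its remark that the Hessian at singular points is obtained by continuity.
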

	\begin{proof}
		Let $H_1\in \mathfrak a^+$ be the element with $\|H_1\|=1.$ By Proposition \ref{prop-CartanHess}, the eigenvalues of $\Hess \ff$ at a point $ke^HK\in X, k\in K, H=tH_1\in \mathfrak a^{++}$ are 
		\begin{align*}
			\frac{\partial^2}{(\partial H_1)^2}\frac{1}{2\|\alpha\|}\log(2\cosh(2\alpha(H)))=& \frac{1}{2||\alpha||}\frac{\partial^2}{(\partial t)^2}\log\cosh(2t||\alpha||)= \frac{\partial}{\partial t}\frac{\sinh(2t||\alpha||)}{\cosh(2t||\alpha||)}\\
			=&2\|\alpha\|(1-\tanh^2(2t||\alpha||))>0, \text{ with multiplicity }1,\\
			\frac{\cosh(\alpha(H))}{\sinh(\alpha(H))}\frac{\partial}{\partial H_\alpha}\frac{1}{2||\alpha||}\log\cosh(2\alpha(H))=&\|\alpha\| \frac{\cosh(\alpha(H))\sinh(2\alpha(H))}{\sinh(\alpha(H))\cosh(2\alpha(H))}\\
			\geq& \|\alpha\| \text{ with multiplicity }m_\alpha,\\
			\frac{\cosh(2\alpha(H))}{\sinh(2\alpha(H))}\frac{\partial}{\partial H_{2\alpha}}\frac{1}{2||\alpha||}\log\cosh(2\alpha(H))=& 2\|\alpha\| \text{ with multiplicity }m_{2\alpha}.\\
		\end{align*}
		The trace $\tr \Hess \ff(x)|_W$ is bounded from below by the sum of bottom $k$ eigenvalues of $\Hess \ff(x)$. Therefore, for $k=1$ the lower bound is $0$, for $k=1+i, i\leq m_\alpha$, the lower bound is $i\|\alpha\|$ and for $k=1+m_{\alpha}+i, i<m_{2\alpha}$, the lower bound is $(m_\alpha+2i)\|\alpha\|$. Analyzing these bounds case by case, we arrive at the lower bound $\tr \Hess \ff(x)|_W\geq \kappa(k)\|\alpha\|$.
	\end{proof}
	Let $S\subset X$ be a varifold, let $gK\in G/K=X$ and let $r\geq 0$. Put 
	\begin{equation}
		v_S(gK,r):= \int_S \chi_r(\ff(s^{-1}g))d\mathcal H^k(sK)=\int_S \chi_r(\ff(g^{-1}s))d\mathcal H^k(sK).
	\end{equation}
	The value $v_S(gK,r)$ roughly corresponds to the mass of $S$ in the $r$-ball around $gK$. Because of the smoothing effect of $\chi$ it is much more convenient to work with than $\mathcal H^k(S\cap B(gK,r)),$ which is the actual mass in the $r$-ball.
	\begin{lem}\label{lem-VarifoldMono}
		\begin{enumerate}
			\item Let $S\subset X$ be a stationary $k$-varifold. For any $gK\in X$, $r>s>0$
			$$v_S(gK,r)\geq e^{\kappa(k)\|\alpha\|(r-s)}v_S(gK,s).$$
			\item There is $\delta>0$ such that $v_S(x,0)\geq \delta$ for any stationary integral $k$-varifold $S\subset X$ and $\mathcal H^k$-almost every point $x$ of $S$.
		\end{enumerate}
	\end{lem}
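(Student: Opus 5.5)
Part (1) comes from applying Lemma \ref{lem-abstmonotonicity} to a translate of $\ff$. Fix $gK\in X$ and set $f(sK):=\ff(g^{-1}s)$, so that $v_S(gK,r)=\int_S\chi_r(f)\,d\mathcal H^k$. We must check the hypotheses of the abstract lemma for $f$.

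\emph{Hessian bound.} Left translation by $g$ is an isometry of $X$, so Lemma \ref{lem-HessianRk1} applies to $f$ unchanged. Hence $\tr\Hess f|_W\geq\kappa(k)\|\alpha\|$ for every $k$-plane $W$.

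\emph{Gradient and properness.} We need $\|\grad f\|\leq 1$, and $f$ must be proper, $C^2$ and nonnegative. Write $t=d(sK,gK)$, so $\alpha(a(g^{-1}s))=t\|\alpha\|$ and
$$f=\frac{1}{2\|\alpha\|}\log\bigl(2\cosh(2\|\alpha\|t)\bigr).$$
Then $\partial_t f=\tanh(2\|\alpha\|t)\in[0,1)$. Since $f$ depends only on the distance and $\|\grad d\|=1$, we get $\|\grad f\|\leq 1$. The formula also gives $f\geq\log 2/(2\|\alpha\|)>0$ and $f\to\infty$ as $t\to\infty$, so $f$ is proper.

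\emph{Smoothness.} $\log\cosh(2\|\alpha\|t)$ is an even analytic function of $t$, so it is analytic in $t^2$. Since $d^2$ is smooth on $X$, $f$ is smooth, including at $gK$ itself. Lemma \ref{lem-abstmonotonicity} with $\kappa=\kappa(k)\|\alpha\|$ then gives the inequality in (1).

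Part (2) follows from Lemma \ref{lem-smallballmono} and a comparison of $\chi_0(\ff)$ with the indicator of a small ball. The value $\ff(e)=\log 2/(2\|\alpha\|)$ is $<1$ by our normalization. By continuity there is $r_1>0$ such that $\ff(g^{-1}s)\leq c<1$ whenever $d(sK,gK)<r_1$. Because $\chi$ is non-increasing and $\chi>0$ on $(0,1)$, we have $\chi_0(\ff)\geq\chi(c)>0$ on $B(gK,r_1)$. Therefore
$$v_S(gK,0)\geq\chi(c)\,\mathcal H^k\bigl(S\cap B(gK,r_1)\bigr)\geq\chi(c)\,\delta(r_1)$$
at every regular point $gK$ of $S$, using Lemma \ref{lem-smallballmono}. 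Regular points have full $\mathcal H^k$-measure, so this holds $\mathcal H^k$-almost everywhere on $S$. The resulting constant depends only on $X$, because $\ff$ is the same function up to translation at every point.

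The only mildly delicate point is the smoothness of $f$ at the center and the gradient bound. Both are handled by the radial formula above.
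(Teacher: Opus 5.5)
Your proposal is correct and follows the same route as the paper. Part (1) is Lemma~\ref{lem-abstmonotonicity} applied to the translate $sK\mapsto \ff(g^{-1}s)$, using the Hessian bound of Lemma~\ref{lem-HessianRk1}; part (2) comes from Lemma~\ref{lem-smallballmono} together with the choice of $\chi$ and the normalization $\ff(e)<1$. Your checks of the gradient bound, properness and smoothness at the center are exactly the hypotheses the paper's one-line proof leaves implicit.
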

	\begin{proof}
		Part (1) follows from Lemma \ref{lem-abstmonotonicity} applied to function $\mathfrak f.$ Part (2) follows from Lemma \ref{lem-smallballmono} and our choice of $\chi.$
	\end{proof}
	In \cite[Theorem 1]{anderson1982complete} Anderson proved that for any complete simply connected negatively curved manifold $N$ of sectional curvature at most $-a$, the volume of the intersection of $B(x,r)\cap S,$ where $S$ is a stationary $k$-varifold and $x$ is a regular point of $S$, is at least equal to the volume of an $r$-ball in the $k$-dimensional contractible space with constant sectional curvature $-a.$ Up to a multiplicative constant, our estimate is comparable with Anderson's for $\mathbb H^n_\mathbb R, \mathbb H^n_\mathbb C$ but it is strictly stronger for $\mathbb H^n_\mathbb H, \mathbb H^2_\mathbb O$ for $k> 4n-3, 9$ respectively. This seemingly modest improvement is at the core of our arguments in Section \ref{sec-stationaryproof}.

	\section{Monotonicity Estimates in Higher Rank}\label{sec-MonoHigherRank}
	

We first give the basic setup and an overview of the strategy.  We then specialize to the $SL(n,\mathbb{R})$ case, before giving the argument for general split groups in Section\ref{s:generalsplit}.  

Recall that for a root $\alpha$, $H_\alpha\in \fa$ is the vector satisfying $\langle H_\alpha, Y\rangle=\alpha(Y), Y\in \fa.$  Set $\rho_{\omega}=\omega (\rho)$ for $\omega$ an element of the Weyl group $W$. Then consider the function $ \mathfrak{a} \rightarrow \mathbb{R}$ defined by  $ \max_{\omega \in W} \langle \rho_\omega, x \rangle$.  Note that $f$ is invariant under the Weyl group. Our original approach was to use a smoothening of this function as a test function for the first variation formula to obtain the monotonicity formula that we need.   It will turn out that a slightly different choice of test function will work better for the final argument.  We set $\lambda = \rho - \frac{t}{2} \theta$ for $t \in (0,1)$ to be specified, and we let $f(x)= \max_{\omega \in W} \langle \lambda _\omega, x \rangle$ for $\lambda_\omega = \omega(\lambda)$.

Recall the Hessian formula in Proposition \ref{prop-CartanHess}.	We will use this formula with $f_{\epsilon}$ the Weyl group invariant $C^2$ function on $\mathfrak{a}$, where $f_{\epsilon}$ are smooth approximations to $f$ that we define now.  We will then set $F_{\epsilon}(gK):=f_{\epsilon}(a(g)).$   There is a standard way to approximate piecewise affine functions by smooth convex functions $f_{\epsilon}$, which in the case of our function $f$ is given by the following formula \cite{BoydVandenberghe2004}[\S 3.1.5]
	\begin{equation} \label{eq-softmaxf}
		f_{\epsilon}(x):= \epsilon \log \left( \sum_{\omega \in W} e^{\langle \lambda_\omega,x \rangle /\epsilon} \right) 
	\end{equation}

   \noindent  We list important properties of $f_{\epsilon}$ in the next proposition.  
	\begin{prop} \label{properties} 

	\begin{enumerate} 
    \item $f_{\epsilon}$ uniformly converges to $f$ on compact sets in the $C^0$ norm as $\epsilon \to 0$. 
		\item $f_{\epsilon}$ smoothly  converges to $f$ at the points where $f$ is smooth as $\epsilon \to 0$. 
		
		\item $f_{\epsilon}$ is invariant under the action of the Weyl group, and so gives a function $F_{\epsilon}$ on $X$ by evaluating on the Cartan coordinate. Since $f_{\epsilon}$ is smooth and $W$-invariant, $F_{\epsilon}$ is also smooth \cite{dadok1982c}.
		
		\item The gradient of $f_{\epsilon}$ is a convex combination of the $\lambda_\omega$: 
		
		\[
		\nabla f_{\epsilon}(x)= \sum_{\omega \in W}  a_\omega(x) \lambda_\omega,
		\] 
		where 
		\[
		a_\omega(x):=\frac{e^{\langle \lambda_\omega,x \rangle/\epsilon } }{\sum_{\omega \in W} e^{\langle \lambda_{\omega},x \rangle /\epsilon}}. 
		\]
		In particular, the gradient has norm at most $||\lambda||$. 
		
		\item\label{eq-softmaxfd2} The Hessian of $f_{\epsilon}$ is given by the following formula: 
		
		\[
		\nabla^2 f_{\epsilon}(x) (v,w) = \frac{1}{2\epsilon} \sum_{\omega_1,\omega_2 \in W} a_{\omega_1}(x) a_{\omega_2}(x) ((\lambda_{\omega_1} - \lambda_{\omega_2}) \cdot v) ((\lambda_{\omega_1} - \lambda_{\omega_2}) \cdot w). 
		\]

        \noindent In particular, $f_{\epsilon}$ is convex.  
		
	\end{enumerate} 
	\end{prop}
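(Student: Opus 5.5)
The proposition is a collection of elementary facts about the log-sum-exp smoothing
\[
f_\epsilon(x)=\epsilon\log\sum_{\omega\in W}e^{\langle\lambda_\omega,x\rangle/\epsilon}
\]
of the finite maximum $f(x)=\max_{\omega}\langle\lambda_\omega,x\rangle$. I will prove the items by direct computation, using only that $W$ is a finite group acting by isometries on $\mathfrak a$ and permuting the $\lambda_\omega$.

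\textbf{Item (1).} Set $m(x)=f(x)$ and factor out $e^{m(x)/\epsilon}$. This gives
\[
f(x)\le f_\epsilon(x)\le f(x)+\epsilon\log|W|,
\]
so the convergence is in fact uniform on all of $\mathfrak a$.

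\textbf{Item (4).} Differentiating $\epsilon\log(\cdot)$ gives $\nabla f_\epsilon=\sum_\omega a_\omega\lambda_\omega$. The weights $a_\omega$ are positive and sum to $1$, and $\|\lambda_\omega\|=\|\lambda\|$ since $W$ acts orthogonally. The triangle inequality then bounds $\|\nabla f_\epsilon\|\le\|\lambda\|$.

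\textbf{Item (5).} Differentiate again, using
\[
\partial_w a_\omega=\epsilon^{-1}a_\omega\Bigl(\langle\lambda_\omega,w\rangle-\sum_{\omega'}a_{\omega'}\langle\lambda_{\omega'},w\rangle\Bigr).
\]
This gives
\[
\nabla^2 f_\epsilon(v,w)=\epsilon^{-1}\Bigl(\sum_\omega a_\omega\langle\lambda_\omega,v\rangle\langle\lambda_\omega,w\rangle-\Bigl(\sum_\omega a_\omega\langle\lambda_\omega,v\rangle\Bigr)\Bigl(\sum_\omega a_\omega\langle\lambda_\omega,w\rangle\Bigr)\Bigr),
\]
which is the covariance of the "random vector" $\lambda_\omega$ under the weights $a_\omega$. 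The standard identity
\[
\mathrm{Cov}(X,Y)=\tfrac12\,\mathbb E\bigl[(X-X')(Y-Y')\bigr],
\]
with $X'$ an independent copy, turns this into the stated symmetric double sum. Convexity follows because setting $v=w$ makes the double sum a sum of squares with nonnegative coefficients.

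\textbf{Item (2).} Let $x$ be a point where $f$ is smooth, in the sense that the maximum is attained by a unique value $\lambda_{\omega_0}$, possibly attained by several $\omega$ all with the same $\lambda_\omega$. On a neighbourhood of $x$ the other exponents are smaller by a margin $\delta>0$. Hence $a_\omega=O(e^{-\delta/\epsilon})$ uniformly for those $\omega$, together with all their derivatives, which are polynomial in $\epsilon^{-1}$ times $a_\omega$. It follows that
\[
f_\epsilon-\langle\lambda_{\omega_0},\cdot\rangle-\epsilon\log\#\{\omega:\lambda_\omega=\lambda_{\omega_0}\}\to0
\]
in $C^k$ near $x$ for every $k$.

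\textbf{Item (3).} $W$-invariance of $f_\epsilon$ holds because
\[
\langle\lambda_\omega,\sigma x\rangle=\langle\lambda_{\sigma^{-1}\omega},x\rangle,
\]
so precomposing with $\sigma\in W$ merely permutes the summands. The function $F_\epsilon(gK)=f_\epsilon(a(g))$ is well defined because $a(g)$ is well defined modulo $W$. Smoothness of $F_\epsilon$ is the only non-elementary point: a $W$-invariant smooth function on $\mathfrak a$ extends to a smooth $K$-invariant function on $\mathfrak s\cong T_KX$ by Dadok's theorem \cite{dadok1982c}. Composing with $\exp\colon\mathfrak s\to X$, which is a diffeomorphism because $X$ is a Hadamard manifold, gives $F_\epsilon$.

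I expect no real obstacle. The only steps needing care are the correct normalization in the double-sum form of the Hessian, namely the factor $\tfrac{1}{2\epsilon}$ with the sum over ordered pairs, and the invocation of Dadok's theorem for item (3).
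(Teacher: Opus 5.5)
Your proposal is correct and follows the same route as the paper: the paper treats $f_\epsilon$ as the standard log-sum-exp smoothing (citing Boyd--Vandenberghe), reads the gradient and Hessian as the mean and $\epsilon^{-1}$ times the covariance of $\lambda_\omega$ under the weights $a_\omega$, converts the covariance to the pairwise-difference form, and invokes Dadok for the smoothness of $F_\epsilon$. Your explicit bound $f\le f_\epsilon\le f+\epsilon\log|W|$ and your exponential-decay argument for item (2) fill in details that the paper leaves to the reference.
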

	
    We point out that the gradient of $f_{\epsilon}$ is the expectation of $\lambda_{\omega}$ with respect to the probability weights $a_{\omega}(x)$, and that the Hessian of $f_{\epsilon}$ is $1/\epsilon$ times the covariance matrix of the $\lambda_{\omega}$ with respect to the probability weights $a_{\omega}(x)$.  One can go from the formula for the Hessian in \cite[Appendix A.4]{BoydVandenberghe2004} to the formula written above by using the standard identity expressing the covariance in terms of pairwise differences. 
    
    \vspace{1mm}
    \noindent \textbf{Overview of the Strategy}
	\vspace{1mm}
	
	Fix a codimension $d$ for our minimal submanifold $\Sigma$.  We are going to show that for $n$ large enough, we have the desired monotonicity formula. This will be based on the Hessian estimate in Proposition \ref{prop-hessianlowerboundhigherrank} below. 
	
	We will obtain a uniform pointwise lower bound on the integral of the trace of the Hessian of the $F_{\epsilon}$ through the tangent subspaces to $\Sigma \cap B(R)$ for $\epsilon$ 
    small enough (this is Proposition \ref{prop-hessianlowerboundhigherrank} below.) An important reference point as we carry this out will be the computation (see \cite{cmw23}, \cite{cmw25}, \cite{fl24}) of the Hessian of the function $\beta: X \rightarrow \mathbb{R}$ defined by pairing $\lambda$   with a vector valued Busemann function (i.e., the $A$-coordinate for some choice of $NAK$ decomposition of $\SL(n,\mathbb{R})$ which allows us to write $X=NA$.)    If we are able to show that the trace of the Hessian of $F_{\epsilon}$ through codimension $d$ tangent subspaces can be taken to be at least that of the Hessian of $\beta$, up to a small error (in particular, depending only on $d$ and not $n$),  then it will turn out that we will be able to use $F_{\epsilon}$ to obtain a monotonicity formula good enough for our purposes.  
    
    An important feature of the arguments to follow is that, if $\epsilon$ is small, at points near many walls of the Weyl chamber the Hessian will be large in many directions, which will imply that the trace of the Hessian through any subspace of low codimension must be large.  On the other hand, near the center of the Weyl chamber the Hessian will very closely resemble the Hessian of $\beta$. The different scenarios addressed in the arguments below are combinations of these two extremes.  

    We recall that the Hessian of $\beta$ has a very similar form to (\ref{prop-CartanHess}) for the $C^2$ function on $\mathfrak{a}^{++}$ given by $\langle \lambda, x \rangle$.     The  Hessian matrix of $\beta$ can be written in an orthonormal basis--analogous to $\{ U_i, V_{\alpha} \}$ for $\nabla^2 F_{\epsilon}$-- relative to which it is diagonal, and so that $n-1$ upper left diagonal entries (corresponding to the $U_i)$ vanish  while the lower right diagonal entries (corresponding to the $V_{\alpha}$) are given by pairing $\alpha$ with $\lambda$. The trace of the Hessian of $\beta$ is therefore equal to $2\langle \rho, \lambda \rangle$.

    \subsection{$\SL(n,\mathbb{R})$ }
    
   We now specialize to the $\SL(n,\mathbb{R})$ case, which we include separately because some of the arguments are simpler than in the general case.  Since we give full arguments in the general case, we omit some proofs in the $SL(n,\mathbb{R})$ case.  The next lemma will be important in what follows.  
\begin{lem} \label{lem-gradientisinthepositiveweylchamber}
    For $x \in \mathfrak{a}^+$, $\nabla f_{\epsilon}(x)$ is contained in $\mathfrak{a}^+$. Equivalently, for $x \in \mathfrak{a}^+$ any positive root evaluated on $\nabla f_{\epsilon}(x)$ is non-negative.  
	\end{lem}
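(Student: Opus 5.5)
The plan is to prove the equivalent formulation directly: for every positive root $\alpha$ and every $x\in\mathfrak a^+$, show that $\alpha(\nabla f_\epsilon(x))\geq 0$. Throughout, $\mathfrak a$ and $\mathfrak a^*$ are identified via the metric, so $\alpha(\lambda_\omega)=\langle H_\alpha,\lambda_\omega\rangle$. By the gradient formula in Proposition \ref{properties}(4),
$$\alpha(\nabla f_\epsilon(x))=\sum_{\omega\in W}a_\omega(x)\,\alpha(\lambda_\omega).$$
The idea is to pair the Weyl group terms using the reflection $s_\alpha\in W$. Since $\omega\mapsto s_\alpha\omega$ is a fixed-point-free involution of $W$, we can split $W$ into pairs $\{\omega,s_\alpha\omega\}$. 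This rewrites the sum as a sum over pairs of
$$a_\omega(x)\,\alpha(\lambda_\omega)+a_{s_\alpha\omega}(x)\,\alpha(\lambda_{s_\alpha\omega}).$$

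The first step is to simplify each pair. We have $\lambda_{s_\alpha\omega}=s_\alpha\lambda_\omega=\lambda_\omega-\frac{2\langle\lambda_\omega,\alpha\rangle}{\langle\alpha,\alpha\rangle}\alpha$, which gives $\alpha(\lambda_{s_\alpha\omega})=-\alpha(\lambda_\omega)$. So the pair contributes
$$\bigl(a_\omega(x)-a_{s_\alpha\omega}(x)\bigr)\,\alpha(\lambda_\omega).$$

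The second step is to compare the two weights. The weights share the denominator $\sum_{\omega'}e^{\langle\lambda_{\omega'},x\rangle/\epsilon}$, so the sign of $a_\omega(x)-a_{s_\alpha\omega}(x)$ is the sign of $\langle\lambda_\omega,x\rangle-\langle s_\alpha\lambda_\omega,x\rangle$, because $\exp$ is increasing. Using the reflection formula again,
$$\langle\lambda_\omega-s_\alpha\lambda_\omega,x\rangle=\frac{2\,\alpha(\lambda_\omega)}{\langle\alpha,\alpha\rangle}\,\alpha(x).$$
For $x\in\mathfrak a^+$ we have $\alpha(x)\geq 0$, since $\alpha$ is a positive root. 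Hence either $\alpha(x)=0$, in which case the two weights are equal and the pair contributes zero, or the difference $a_\omega(x)-a_{s_\alpha\omega}(x)$ has the same sign as $\alpha(\lambda_\omega)$. In either case each paired contribution is non-negative, and summing over pairs gives $\alpha(\nabla f_\epsilon(x))\geq 0$.

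Finally, an element of $\mathfrak a$ on which every positive root is non-negative lies in $\mathfrak a^+$, since $\mathfrak a^+$ is the closed chamber cut out by the simple roots. This gives the equivalence of the two formulations in the statement.

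There is no serious obstacle in this argument. The only point needing care is that the pairing $\omega\leftrightarrow s_\alpha\omega$ is well defined: $\lambda=\rho-\frac t2\theta$ may have nontrivial stabilizer in $W$, but the sum is indexed by $W$ rather than by the orbit, so this causes no trouble. Note also that the argument uses neither dominance of $\lambda$ nor the specific form of $\theta$, only that $\lambda\in\mathfrak a^*$ is real.
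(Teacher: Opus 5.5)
Your proof is correct, but it takes a different route from the paper's proof of this lemma.

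The paper argues via convexity. By Proposition \ref{properties}, item (5), $D^2 f_\epsilon(H_\alpha,H_\alpha)\geq 0$. By $s_\alpha$-invariance of $f_\epsilon$, the derivative in the direction $H_\alpha$ vanishes at the wall point $x-t_0H_\alpha$, where $t_0=\alpha(x)/\|\alpha\|^2\geq 0$. Integrating the nonnegative second derivative along the segment from that point to $x$ then gives $\langle\nabla f_\epsilon(x),H_\alpha\rangle\geq 0$. The paper does this for simple roots, and positive roots follow as nonnegative combinations. That argument uses only convexity and $W$-invariance, so it applies to any $W$-invariant convex $C^2$ function on $\mathfrak a$, not just the softmax.

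You instead work directly with the explicit gradient $\sum_\omega a_\omega(x)\lambda_\omega$ and pair $\omega$ with $s_\alpha\omega$. This needs neither the Hessian formula nor convexity, and it treats every positive root at once. It is also quantitatively stronger. Each pair contributes $(a_\omega(x)-a_{s_\alpha\omega}(x))\,\alpha(\lambda_\omega)\geq 0$, so you can discard all pairs but one. Keeping $\{e,s_\alpha\}$, for instance, gives $\alpha(\nabla f_\epsilon(x))\geq \alpha(\lambda)\,(a_e(x)-a_{s_\alpha}(x))$.

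The paper uses exactly this pairing device later, in Case 2 of the proof of Proposition \ref{prop-hessianlowerboundhigherrank} and in Lemma \ref{lem:1046}, to get the lower bound behind the large Hessian estimates. Your proof therefore ties this lemma to those steps, while the paper's proof is more general.
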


We prove this in the general case of split groups in Section \ref{s:generalsplit} below for $\lambda = \rho$, but the same argument works for the $\lambda$ that we consider.   Recall that the half sum $\rho$ of the positive roots satisfies:

    \begin{equation} \label{eqn:formulaforrho}
    2 \rho = (n-1)e_1 + (n-3)e_2 + ...+ (1-n) e_n
    \end{equation}
and that $ \rho - \frac{t}{2}\theta= \lambda = (\lambda_1 ,.., \lambda_n)$ satisfies $\lambda_i -\lambda_{i+1} \geq 1/2$ and $\lambda_i = -\lambda_{n+1-i}$.

   Denote by $s_i$ the reflection in the wall $W_i$. Labeling the walls of the Weyl chamber $W_1,..,W_{n-1}$, we set $a_{W_i}=a_\omega$ for $\omega= s_i$.


    

 \begin{lem} \label{largehessian}
For $\epsilon$ small enough we can choose $\delta=\delta(\epsilon)>0$ so that the following holds.  If $a_{W_i}(x)>\delta(\epsilon)$ for at least one $i$, then $\nabla^2 f_{\epsilon}(x) (v,v)>1000||\rho||^2$ for any unit vector $v$ in the span of the $e_{i}- e_{i+1}$ for $a_{W_i}(x)>\delta(\epsilon)$.  For example, we can take $\delta(\epsilon)= \epsilon^{1/2}$.
 \end{lem}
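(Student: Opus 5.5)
The idea is to read off a lower bound from the covariance formula for the Hessian in Proposition \ref{properties}(\ref{eq-softmaxfd2}). Since every term there is non-negative, we may keep only a few of them. For a unit vector $v$ we have
\[
\nabla^2 f_\epsilon(x)(v,v)\ \geq\ \frac{1}{\epsilon}\sum_{i:\,a_{W_i}(x)>\delta}\ \sum_{\omega\in W} a_{\omega}(x)a_{s_i\omega}(x)\,\langle \lambda_\omega-\lambda_{s_i\omega},v\rangle^2 .
\]
The factor $\tfrac12$ is absorbed because the ordered pairs $(\omega,s_i\omega)$ and $(s_i\omega,\omega)$ both appear. Fix $x\in\mathfrak a^+$; by Weyl invariance, working there is harmless.

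The first step is to show that the identity element carries most of the weight. For $x\in\mathfrak a^+$, $\langle\lambda,x\rangle=\max_\omega\langle\lambda_\omega,x\rangle$, because $\lambda$ is dominant: $\lambda_i-\lambda_{i+1}\geq 1/2$. Hence $a_{1}(x)\geq 1/|W|$, since it is the largest weight. In fact I would rather use a cleaner pairing. Take only $\omega=1$ and $\omega=s_i$. Then
\[
\lambda-s_i\lambda=\langle\lambda,\alpha_i^\vee\rangle\alpha_i=(\lambda_i-\lambda_{i+1})(e_i-e_{i+1}).
\]
The coefficient $\lambda_i-\lambda_{i+1}$ is at least $1/2$. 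The corresponding term therefore contributes at least
\[
\tfrac{1}{\epsilon}\,a_1(x)a_{W_i}(x)\,\tfrac14\,\langle e_i-e_{i+1},v\rangle^2 .
\]

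The obstacle is that we need a quadratic form that is large on the whole span $V=\mathrm{span}\{e_i-e_{i+1}: a_{W_i}>\delta\}$. The vectors $e_i-e_{i+1}$ are not orthogonal, so the form $\sum_i\langle e_i-e_{i+1},v\rangle^2$ restricted to $V$ must be bounded below by a constant times $\|v\|^2$. This is the Gram matrix of a subset of simple roots of $A_{n-1}$, which is a block tridiagonal Cartan-type matrix. Its smallest eigenvalue can be as small as about $1/n^2$. So the plan gives
\[
\nabla^2 f_\epsilon(x)(v,v)\ \geq\ \frac{a_1(x)\,\delta\, c}{4\epsilon\, n^2}.
\]
This is where I expect the main difficulty, together with the lower bound on $a_1$: the trivial bound $a_1\geq 1/|W|=1/n!$ is bad. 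Both issues only affect how small $\epsilon$ must be, though, because $n$ is fixed. With $\delta=\epsilon^{1/2}$ the bound becomes $c'(n)\,\epsilon^{-1/2}$. This exceeds $1000\|\rho\|^2$ once $\epsilon$ is small enough depending on $n$, which is all the statement demands ("for $\epsilon$ small enough").

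For the argument to be robust, I would still try to improve $a_1$ to a uniform bound. The weights $a_\omega$ with $\langle\lambda_\omega,x\rangle$ close to the maximum correspond to Weyl elements in the stabilizer of the facet of $\mathfrak a^+$ near $x$. I would not pursue this beyond noting it, since the $n$-dependent constant suffices for the lemma as stated. In summary, the steps are:
\begin{enumerate}
\item Discard all but the pairs $(1,s_i)$ in the covariance formula.
\item Compute $\lambda-s_i\lambda$ and use $\lambda_i-\lambda_{i+1}\geq1/2$.
\item Bound $a_1$ from below using the maximality of $\langle\lambda,x\rangle$.
\item Bound the restricted Gram form of the simple roots from below.
\item Choose $\epsilon$ small so that $\epsilon^{-1/2}$ dominates.
\end{enumerate}
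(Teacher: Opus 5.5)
Your proposal is correct and is essentially the paper's argument: keep only the $(1,s_i)$ terms of the covariance formula for $\nabla^2 f_\epsilon$, and use $a_1(x)\geq 1/|W|$ (from dominance of $\lambda$) together with $a_{s_i}(x)>\delta$. Then bound $\sum_i\langle e_i-e_{i+1},v\rangle^2$ below on the span — the paper invokes an abstract norm-comparison constant, which you identify with the smallest Gram eigenvalue — so that $\delta=\epsilon^{1/2}$ gives a lower bound of order $\epsilon^{-1/2}$. One harmless slip: in your first display, summing over all $\omega\in W$ already counts both orientations of each pair, so the prefactor should be $\frac{1}{2\epsilon}$ rather than $\frac{1}{\epsilon}$; your subsequent bound $\frac{1}{\epsilon}a_1a_{s_i}\frac14\langle e_i-e_{i+1},v\rangle^2$ for the pair $\{1,s_i\}$ is nonetheless correct.
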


	\noindent  This is large enough to singlehandedly make the trace of the Hessian as large as we need through any codimension d tangent space to $x$ containing a vector $v$ as above: that is, larger than the trace of the Hessian of $\beta$ through any codimension $d$ subspace, where we are using that the Hessian of $F_{\epsilon}$ is non-negative definite by Lemma \ref{lem-gradientisinthepositiveweylchamber}.  

     Let us summarize the role of $\delta(\epsilon)$ in the arguments to follow: it is simultaneously large enough to make the Hessian of $f_{\epsilon}$ extremely large in certain directions if $x$ is close to walls of the Weyl chamber, and small enough that if $a_\omega(x)<\delta(\epsilon)$, then $a_\omega(x) \lambda_\omega$ will have a negligible contribution to the gradient of $f_{\epsilon}$. In what follows we will sometimes abbreviate $\delta(\epsilon)$ as $\delta$.  


     \begin{proof}
    
    That we can choose $\delta(\epsilon)$ in this way follows from the formula for the Hessian in Proposition \ref{properties}, which we can make large by making the term $\frac{1}{2\epsilon} a_{\omega_1}(x) a_{\omega_2}(x) ((\lambda_{\omega_1} - \lambda_{\omega_2}) \cdot v) ((\lambda_{\omega_1} - \lambda_{\omega_2}) \cdot w)$ large for $\omega_1=e$ (recall that $\lambda_{\omega_1}=\lambda$ so that $a_{\omega_1}(x)$ is always uniformly bounded away from zero.)  We are also using the following linear algebra fact: for any basis $\{w_1,..,w_n\}$ for a finite dimensional inner product space (in our case the $w_j$ will be some subset of the $\alpha_{i,i+1}=e_{i}-e_{i+1}$ and the inner product space will be their span), there is some $\epsilon'$ depending on the basis so that any unit vector $v$ in the inner product space satisfies $|\langle w_i,v \rangle|>\epsilon'$ for some $i$. We moreover take $\delta=\delta(\epsilon)$ tending to zero as $\epsilon \to 0$, but note that it will be the case that $\delta(\epsilon)>>\epsilon$ as $\epsilon \to 0$: for example, this is the case for $\delta(\epsilon)=\epsilon^{1/2}$. 
    
\end{proof}



 \begin{prop} \label{prop-hessianlowerboundhigherrank}
Given any $\eta>0$, provided $\epsilon$ is taken small enough we can ensure that the trace of $\nabla^2 F_{\epsilon}$ through any codimension $d$ tangent subspace is at least $2\langle \rho, \lambda \rangle - d (n-1-\frac{t}{2}) - \eta $.  
 \end{prop}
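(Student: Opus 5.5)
Fix $x=ke^HK$ with $H\in\mathfrak a^+$ (singular points then follow by continuity). By Proposition \ref{prop-CartanHess}, in the orthonormal basis $\{U_i,V_\alpha\}$ the Hessian $\nabla^2F_\epsilon$ has two blocks. The $\mathfrak a$-block is $D^2f_\epsilon(H)$, which is positive semidefinite by Proposition \ref{properties}(\ref{eq-softmaxfd2}). The other block is diagonal with entries $h_\alpha:=\coth(\alpha(H))\,\alpha(\nabla f_\epsilon(H))$, and these are $\geq \alpha(\nabla f_\epsilon(H))\geq 0$ by Lemma \ref{lem-gradientisinthepositiveweylchamber}. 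The whole Hessian is therefore positive semidefinite.

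For a codimension $d$ subspace $W\subset T_xX$, the trace of the Hessian restricted to $W$ is at least the full trace minus the sum of the $d$ largest eigenvalues, by Cauchy interlacing / Ky Fan. So the plan is to bound the full trace from below and the top $d$ eigenvalues from above, with one exceptional case handled separately. Write $\nabla f_\epsilon(H)=\sum_\omega a_\omega\lambda_\omega$.

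\textbf{Case 1: $a_{W_i}(H)>\delta(\epsilon)=\epsilon^{1/2}$ for some $i$.} Lemma \ref{largehessian} gives a direction $v$ in $\mathfrak a$ with $\nabla^2 f_\epsilon(v,v)>1000\|\rho\|^2$. Since the total rank of the large directions is at most $n-1$ and $d<n-1$ in the regime considered, any codimension $d$ subspace $W$ meets their span, say in a unit vector $u\in W$. Positive semidefiniteness gives $\tr\nabla^2F_\epsilon|_W\geq \nabla^2F_\epsilon(u,u)$. I would then like $\nabla^2F_\epsilon(u,u)$ to dominate $2\langle\rho,\lambda\rangle$. The issue is that $u$ need not lie entirely in the large span. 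To handle this, I would take the whole span $E$ of the large directions, with $\dim E=m$. Then $\dim(W\cap E)\geq m-d$, so for $m>d$ the trace is at least $1000\|\rho\|^2\geq 2\langle\rho,\lambda\rangle$. If $m\le d$, I would proceed as in Case 2 with the $E$-directions discarded.

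\textbf{Case 2: all $a_{s_i}\le\delta$ for simple reflections $s_i$.} Here I need $a_\omega$ to be small for every $\omega\neq e$. Because $H\in\mathfrak a^+$, $\langle\lambda_\omega,H\rangle$ decreases along reduced words, so $a_\omega\le a_{s_i}$ for a first letter $s_i$ of $\omega$. Hence $\nabla f_\epsilon=\lambda+O(|W|\delta\|\lambda\|)$; the factor $|W|$ is harmless since $n$ is fixed as $\epsilon\to0$. Then $h_\alpha\geq \alpha(\nabla f_\epsilon)\to\alpha(\lambda)$, and the full trace is at least $\sum_{\alpha\in\Phi^+}\alpha(\lambda)-\eta/2=2\langle\rho,\lambda\rangle-\eta/2$.

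For the top eigenvalues in Case 2, I would bound $\nabla^2 f_\epsilon$ by $\frac{1}{\epsilon}\sum_{\omega\neq e}a_\omega\|\lambda-\lambda_\omega\|^2$. The hard subcase is when some $a_{s_i}$ lies between $\epsilon^{K}$ and $\delta$, because then $a_\omega/\epsilon$ can be huge. Such directions are exactly the ones I would discard into $E$, as in Case 1. I would refine the threshold via a ladder of scales $\delta_j$ and choose a gap by pigeonhole, so that the remaining $a_\omega$ are $o(\epsilon)$.

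For the $V_\alpha$ eigenvalues, $\coth(\alpha(H))$ blows up near walls. That blow-up only increases the trace, but it also increases the top-$d$ sum. To control this I would use $\coth(\alpha(H))\,\alpha(\nabla f_\epsilon)$ with $\alpha(\nabla f_\epsilon)\le\max_\alpha \alpha(\lambda)\le n-1-\frac t2$ (the highest root evaluated on $\lambda$). The trace lost over the $d$ deleted directions should then be at most $d(n-1-\frac t2)$ plus errors. The remaining difficulty is reconciling the $\coth$ factor with this bound. I would argue that the excess $(\coth-1)\alpha(\nabla f_\epsilon)$ in a deleted direction is matched by the same excess appearing in the full trace, since the deleted eigenvalue only subtracts what it added. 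Concretely, I would compare against $\sum_\alpha\alpha(\nabla f_\epsilon)$ rather than $\sum_\alpha h_\alpha$, counting the $\coth$ surplus only on non-deleted directions.

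\textbf{Main obstacle.} I expect the hardest step to be the intermediate-weight regime of Case 2, where some $a_\omega\in(\epsilon,\delta)$ and the $\mathfrak a$-block Hessian is large but not overwhelmingly so. The case split must guarantee that these directions are either numerous enough to make the trace large, or few and absorbed into the $d$ discarded directions.
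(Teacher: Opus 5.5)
Your reduction is sound: the Hessian is positive semidefinite, and the trace over a codimension-$d$ subspace is bounded below by the sum of the $\dim X-d$ smallest eigenvalues. Your two extreme cases are also correct: more than $d$ active walls, and no active wall, where indeed $\nabla f_\epsilon=\lambda+O(\delta)$.

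\textbf{The gap.} It lies in the case of $1\le m\le d$ active walls, which you dispatch with ``proceed as in Case 2 with the $E$-directions discarded''. Case 2 rests on $\nabla f_\epsilon\approx\lambda$, and this is false once a wall is active. The weights $a_\omega$ for $\omega$ in the group $W_x$ generated by the active reflections are then of order one; for example $a_{s_i}=a_e$ when $x_i=x_{i+1}$. Hence $\sum_{\alpha\in\Phi^+}\alpha(\nabla f_\epsilon)=2\langle\rho,\nabla f_\epsilon\rangle$ falls short of $2\langle\rho,\lambda\rangle$ by an amount of order one, not $o(1)$.

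A concrete instance: take a point where the $m+1$ coordinates of one block coincide and all other walls are far. There $\nabla f_\epsilon$ is constant on the block, and the shortfall is approximately $\sum_{i<j \text{ in the block}}(\lambda_i-\lambda_j)\geq\frac12\binom{m+2}{3}$. Discarding the $m$ directions of $E$ frees only $m(n-1-\frac t2)$ of the budget. So for $m=d\approx n/8$ and $n$ large, your lower bound $\sum_\alpha\alpha(\nabla f_\epsilon)-(\text{sum of the } d-m \text{ largest})$ falls below $2\langle\rho,\lambda\rangle-d(n-1-\frac t2)-\eta$.

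Your idea of ``counting the $\coth$ surplus only on non-deleted directions'' points the right way, but it is never quantified. You also treat the $\coth$ blow-up as a nuisance for the top-$d$ sum, whereas it is exactly what has to pay for this shortfall.

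\textbf{What is missing: the paper's Cases 2 and 3.}

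\emph{Roots inside a block.} Take $\alpha_{i,j}$ with $i,j$ in the same block and pair $\omega$ with $s_i\omega$. Each pair contributes nonnegatively to $\varsigma_i-\varsigma_{i+1}$. The pair $\{e,s_i\}$ alone gives $(\lambda_i-\lambda_{i+1})(a_e-a_{s_i})$, where $a_{s_i}=a_e e^{-(\lambda_i-\lambda_{i+1})(x_i-x_{i+1})/\epsilon}$. The condition $a_{s_i}>\delta$ forces $x_i-x_{i+1}\le\frac{\epsilon}{\lambda_i-\lambda_{i+1}}\log(1/\delta)$. Concavity of $u\mapsto 1-e^{-(\lambda_i-\lambda_{i+1})u/\epsilon}$ on that interval then gives $\varsigma_i-\varsigma_j\ge C_\epsilon(x_i-x_j)$ with $C_\epsilon\to\infty$. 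Therefore $\coth(x_i-x_j)(\varsigma_i-\varsigma_j)\ge C_\epsilon$, and you may use the lower bound $L_{i,j}=\lambda_i-\lambda_j$ there.

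\emph{All other roots.} Lemma~\ref{lem:cycles} confines the non-negligible weights to permutations that preserve each block. So block sums and off-block coordinates of $\nabla f_\epsilon$ agree with those of $\lambda$ up to $O(\delta)$. Consequently the sum of $\alpha(\nabla f_\epsilon)$ over these roots matches the sum of $\alpha(\lambda)$.

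\emph{Conclusion.} The sum of the $N-d$ smallest eigenvalues is monotone under lowering entries. So you may replace the $\mathfrak a$-block by $0$ and each $V_\alpha$ entry by some $L_\alpha\in[0,n-1-\frac t2+o(1)]$ and conclude. No upper bound on the true top eigenvalues is needed.

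In particular your ladder-of-scales step is unnecessary, and as sketched it would not close anyway. Pigeonhole gives some empty rung $(\delta_{j+1},\delta_j]$, not one straddling $\epsilon$. So you cannot guarantee both $\delta_j/\epsilon\to\infty$ (for the discarded directions) and $\delta_{j+1}=o(\epsilon)$ (for the rest).
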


\begin{rem}
We point out that the $\epsilon$ above does not depend on $r$, although when applying Proposition \ref{prop-hessianlowerboundhigherrank} further down we will often need to choose $\epsilon$ depending on $r$.  
\end{rem}
 
\begin{proof}
  Suppose that a point $x \in \mathfrak{a}^{+}$ satisfies $a_{W_i}(x)>\delta(\epsilon)$ for more than $d $ walls $W_i$ of the Weyl chamber.  Then by Lemma \ref{largehessian} any codimension $d$ tangent subspace to $x$ will necessarily contain a vector $v$ as in the statement of that lemma. Therefore the trace of the Hessian of $F_{\epsilon}$ through such a subspace will be large enough to satisfy the conclusion of the proposition. We are using here that the Hessian of $F_{\epsilon}$ is non-negative definite (Lemma \ref{lem-gradientisinthepositiveweylchamber}), so that the large contribution of the Hessian at $v$ gives a lower bound for the whole trace through a subspace containing it.


 Next suppose that a point $x \in \mathfrak{a}^{+}$ has $a_{W_{i_\ell}}(x)>\delta(\epsilon)$ for exactly $k\leq d $ walls $W_{i_1}$,.., $W_{i_k}$ of the Weyl chamber. We need to control the diagonal terms in the Hessian corresponding to the $V_{\alpha}$ in (\ref{prop-CartanHess}.) 

It will be useful in Cases 2 and 3 below to arrange the $\{i_1, i_2, ..,i_k \}$ into blocks of consecutive numbers $B_1',B_2',..$ each of the form $i_{\ell},i_{\ell}+1,..,i_{\ell'}-1, i_{\ell'}$, and that are maximal with the property that they consist of consecutive elements of $\{i_1, i_2, ..,i_k \}$. Then we set $B_i$ equal to the set containing $B_i'$ together with the largest element of $B_i'$ plus one. We also refer to the $B_i$ as blocks. For example, if $\{i_1, i_2, ..,i_k \}$ were equal to $\{1,2,4,5,6,8\}$, the blocks $B_i$ would be $\{1,2,3\}$, $\{4,5,6,7\}$, and $\{8,9\}$.  

The following observation, which we write as a lemma, will be useful.

\begin{lem} \label{lem:cycles}

Suppose that $a_\omega(x)>\delta(\epsilon)$.  Then each cycle of the cycle decomposition of the corresponding element $\omega$ of the Weyl group is supported in one of the blocks $B_i$. 
\end{lem}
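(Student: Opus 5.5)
The plan is to compare $a_\omega(x)$ directly with $a_{W_j}(x)=a_{s_j}(x)$ for a suitable wall $j$ that is \emph{not} among $i_1,\ldots,i_k$. All the $a_\omega(x)$ share the same denominator, so $a_\omega(x)\le a_{s_j}(x)$ is equivalent to $\langle \lambda_\omega,x\rangle\le\langle\lambda_{s_j},x\rangle$. Since $a_{s_j}(x)\le\delta(\epsilon)$ for every $j\notin\{i_1,\ldots,i_k\}$, such an inequality contradicts $a_\omega(x)>\delta(\epsilon)$. So it suffices to prove the following. If some cycle of $\omega$ is not contained in a single block, then there is $j\notin\{i_1,\dots,i_k\}$ with
\[
\langle \lambda-\omega\lambda,x\rangle\ \ge\ \langle \lambda-s_j\lambda,x\rangle .
\]

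\textbf{Step 1 (locating $j$).} Find the index $j$ from the combinatorics of the blocks. By construction, the blocks $B_i$ together with the singletons not in any block partition $\{1,\ldots,n\}$ into intervals of consecutive integers. A boundary between $j$ and $j+1$ occurs exactly when $j\notin\{i_1,\ldots,i_k\}$. If every cycle of $\omega$ lies in one of these intervals, the conclusion holds: a singleton cycle is a fixed point and can be regarded as lying in any block, or we adopt the convention that singletons are trivial blocks. Otherwise some cycle crosses a boundary. Then there is $j\notin\{i_1,\dots,i_k\}$ such that $\omega$ does not preserve the set $\{1,\ldots,j\}$.

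\textbf{Step 2 (coefficient estimate).} Write $\lambda-\omega\lambda=\sum_{m} c_m\alpha_{m,m+1}$ in simple roots. For type $A$ the coefficient is $c_m=\sum_{i\le m}\bigl(\lambda_i-(\omega\lambda)_i\bigr)$. The first $m$ coordinates of $\omega\lambda$ are $m$ distinct entries of $\lambda$, and $\lambda_1>\lambda_2>\cdots>\lambda_n$. Hence $c_m\ge 0$ for all $m$. If the index set of those entries is not $\{1,\ldots,m\}$, then the largest possible sum is attained by $\{1,\ldots,m-1,m+1\}$, so $c_m\ge\lambda_m-\lambda_{m+1}$. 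Apply this with $m=j$.

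\textbf{Step 3 (conclusion).} Since $x\in\mathfrak a^+$, every $\alpha_{m,m+1}(x)\ge0$. Therefore
\[
\langle\lambda-\omega\lambda,x\rangle\ \ge\ c_j\,\alpha_{j,j+1}(x)\ \ge\ (\lambda_j-\lambda_{j+1})\,\alpha_{j,j+1}(x)=\langle\lambda-s_j\lambda,x\rangle,
\]
using $s_j\lambda=\lambda-(\lambda_j-\lambda_{j+1})\alpha_{j,j+1}$. This gives $a_\omega(x)\le a_{W_j}(x)\le\delta(\epsilon)$, the desired contradiction.

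\textbf{Main obstacle.} I expect the only delicate point to be Step 2, that is, getting exactly the gap $\lambda_j-\lambda_{j+1}$ rather than something weaker. The argument needs the strict ordering of the $\lambda_i$, which holds since $\lambda_i-\lambda_{i+1}\ge1/2$. It also needs the reduction of "a cycle crosses a block boundary" to "$\omega$ fails to preserve an initial segment $\{1,\ldots,j\}$ with $j$ a non-wall index". With these in hand no $\epsilon$-dependent constants enter, and the lemma holds for any threshold $\delta$.
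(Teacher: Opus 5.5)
Your proof is correct, and its skeleton matches the paper's, which defers this lemma to the proof of Lemma~\ref{lem-supprtWeyl}. In both, one shows $\langle s_j\lambda-\omega\lambda,x\rangle\ge 0$ for a simple reflection $s_j$ in the support of $\omega$, so that $a_\omega(x)\le a_{s_j}(x)$. In type $A$, the condition ``$\omega$ does not preserve $\{1,\ldots,j\}$'' is exactly $s_j\in\supp\omega$. Likewise, ``every cycle lies in a block'' is exactly $\omega\in W_x$, so your Step~1 is the type-$A$ translation of the paper's $\supp w\subset\Delta_x$.

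Where you differ is in how the key inequality is proved. The paper works in an arbitrary root system. It reduces by bilinearity to pairs of fundamental coweights and handles the off-diagonal cases by induction on $\ell(w)$, using the exchange property. The diagonal case $\beta=\gamma=\alpha$ is settled by an integrality argument in the coroot lattice. You instead write $\lambda-\omega\lambda$ in simple-root coordinates, where the coefficients are the partial sums $c_m$. You then bound these by an elementary majorization argument that uses only that $\lambda$ is decreasing. Weak monotonicity suffices here; strict ordering is not needed.

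Your route is shorter and fully explicit. It also gives slightly more: $\langle\lambda-\omega\lambda,x\rangle\ge\sum_{j}\langle\lambda-s_j\lambda,x\rangle$, summed over all $j$ with $\omega(\{1,\ldots,j\})\neq\{1,\ldots,j\}$. That is, $a_\omega(x)/a_e(x)$ is bounded by the product of the corresponding ratios $a_{s_j}(x)/a_e(x)$. The paper's route is uniform over all root systems, including the exceptional ones. That uniformity is what the general split case in Section~\ref{s:generalsplit} needs, and your coordinate computation does not supply it.
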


We omit the proof which for general simple Lie groups and $\lambda= \rho$ is given in the proof of Lemma \ref{lem-supprtWeyl} below. The same argument works for general $\lambda$.   

 To prove Proposition \ref{prop-hessianlowerboundhigherrank}, we need to understand the diagonal terms of the Hessian for the vectors $V_{\alpha}$ and their contributions to the trace of the Hessian.  To do so we will make reference to the Hessian of the function $\beta$ described above, whose diagonal entry corresponding to $\alpha_{i,j}$ is given by pairing $H_{\alpha_{i,j}}$ with $\lambda$.  We divide into three cases based on the root $\alpha=\alpha_{i,j}:= e_i-e_j$, $i<j$. We will find lower bounds $L_{i,j}$ (depending on $x$) for $ \nabla^2 F_{\epsilon}(V_{\alpha_{i,j}},V_{\alpha_{i,j}})$ in each of these cases.  

 \begin{itemize}
\item \textbf{Case 1}: $\alpha=\alpha_{i,j}$ for $i,j \notin \cup_{\ell=1}^k  (i_{\ell} \cup(i_{\ell}+1))$.

In this case, note first that $\frac{\cosh(\alpha(H))}{\sinh(\alpha(H))}$ will be greater than or equal to 1.   Note also that the inner product of $H_{\alpha_{i,j}}$ with $\nabla f_{\epsilon}$ will be approximately equal to the inner product of $H_{\alpha_{i,j}}$ with $\lambda$,  up to an error that can be taken as small as desired by making $\epsilon$ and $\delta(\epsilon)$ small enough, and using Lemma \ref{lem:cycles}. Therefore we can make the lower bound $L_{i,j}$ for the entry of the Hessian corresponding to $V_{\alpha_{i,j}}$ at least as large as the corresponding entry $\lambda_i-\lambda_j$ for the Hessian of $\beta$ in this case, up to an error that can be made as small as desired by making $\epsilon$ small enough.

\item \textbf{Case 2}: $\alpha=\alpha_{i,j}$ for $i,j \in \cup_{\ell=1}^k  (i_{\ell} \cup(i_{\ell}+1))$, and $i$ and $j$ contained in the same block $B_m$.  

\vspace{2mm}
We first handle the case $j=i+1$. Denote by $s_i$ the permutation swapping $i$ and $i+1$.  For $i, i+1 \in B_m$, we have that 
\[
a_{s_i}=a_e e^{- (\lambda_i-\lambda_{i+1})(x_i-x_{i+1})/\epsilon}, 
\]
which using $a_{s_i}(x) > \delta$ and $a_e(x) \leq 1$ implies 
\begin{equation} \label{ineqb}
x_{i}-x_{i+1} \leq \frac{\epsilon}{\lambda_i-\lambda_{i+1}}  \log (1/\delta).
\end{equation}

Set $\varsigma= (\varsigma_1,..,\varsigma_n) = \nabla f_{\epsilon}= \sum_{w} a_w w \lambda$. We break the permutations in $W$ into pairs $w, s_i w$, and note that each pair contributes non-negatively to $\varsigma_i - \varsigma_{i+1}$.  Using the pair containing the identity to give a lower bound for the latter, we get: 
\begin{equation} \label{ineq:1175}
\varsigma_i - \varsigma_{i+1} \geq (\lambda_i - \lambda_{i+1})(a_e-a_{s_i}) \geq \frac{(\lambda_i - \lambda_{i+1})}{n!} \left(1 - e^{\frac{-(\lambda_i - \lambda_{i+1})(x_i-x_{i+1)}}{\epsilon}}\right),
\end{equation}
where we use that $a_e \geq 1/n!$ by virtue of being at least as large as any other $a_{\omega}$.  Now note that the function $h(u)=1-e^{-(\lambda_i-\lambda_{i+1}) u / \epsilon}$ is concave, and so setting the left and right sides of (\ref{ineqb}) equal to $u_i$ and $U_i$ respectively, we get that since $0 \leq u_i \leq U_i$,
\[
h(u_i) \geq \left(1- \frac{u_i}{U_i}\right) h(0) + \frac{u_i}{U_i} h(U_i), 
\]
so because $h(0)=0$ and $h(U_i)=1- e^{-\log(1/\delta)}=1-\delta$, we have that 
\[
1-e^{-(\lambda_{i}-\lambda_{i+1})u_i/\epsilon} \geq \frac{(\lambda_{i}-\lambda_{i+1})(1-\delta)}{\epsilon \log (1/\delta)}u_i. 
\]
Substituting this into (\ref{ineq:1175}), we get 
\begin{equation} \label{ineq:iiplusone}
\varsigma_i - \varsigma_{i+1} \geq \frac{((\lambda_{i}-\lambda_{i+1})^2 (1-\delta) }{n! \epsilon \log (1/\delta)} u_i \geq \frac{ (1-\delta) }{4 n! \epsilon \log (1/\delta)} u_i=C_{\epsilon} u_i
\end{equation}
\noindent where in the last inequality we used $\lambda_{i}-\lambda_{i+1} \geq 1/2$.  We note that since $\delta = \sqrt{\epsilon}$, $C_\epsilon \to \infty$ as $\epsilon \to 0$.   If $i<j$ belong to the same block, we can sum (\ref{ineq:iiplusone}) over the range of values between them to get 
\[
\varsigma_i - \varsigma_{j} \geq C_{\epsilon} (x_i - x_j).
\]

\noindent We therefore have that 
\[
\nabla^2 F_{\epsilon}(V_{\alpha_{i,j}},V_{\alpha_{i,j}})= \coth(x_i-x_j)(\varsigma_i-\varsigma_j) \geq C_\epsilon (x_i-x_j) \coth(x_i-x_j) \geq C_{\epsilon}.  
\]
\noindent We can then choose $\epsilon$ small enough that $C_{\epsilon}>n$, and set $L_{i,j}=\lambda_i-\lambda_j <n.$

\item \textbf{Case 3}: $\alpha=\alpha_{i,j}$ for either exactly one of $i,j$  contained in $\cup_{\ell=1}^k (i_{\ell} \cup(i_{\ell}+1))$, or else both of $i,j$ contained in $\cup_{\ell=1}^k (i_{\ell} \cup(i_{\ell}+1))$ but in separate blocks $B_{m(i)},B_{m(j)}$.  

\vspace{2mm}
We emphasize that the various restrictions we impose on the size of  $\epsilon$ depend only on $n$ and $d$, but not $x$.  Let's assume first that $i$ but not $j$ is contained in $\cup_{\ell=1}^k  (i_{\ell} \cup(i_{\ell}+1))$ (the case where the reverse is true is similar.)  



We set $L_{i,j}$ equal to the pairing of $\nabla f_{\epsilon}$ with $H_{\alpha_{i,j}}$.  It is possible that $L_{i,j}$ is greater than the pairing of $\lambda$ with $H_{\alpha_{i,j}}$ or less than it.  We will check that on average these two possibilities balance out: that the sum of the $L_{i,j}$ for $\alpha_{i,j}$ in this case is at least the corresponding sum of the $\langle \lambda, H_{\alpha_{i,j}} \rangle$.



Suppose that $i$ varies within some $B_{m(i)}=\{i_{\ell},i_{\ell}+1,..,i_{\ell'}, i_{\ell'}+1\}$. Then by the formula for $\nabla f_{\epsilon}$ the coefficients of $e_{i_{\ell}},..,e_{i_{\ell'}}, e_{i_{\ell'}+1}$ in $\nabla f_{\epsilon}$ will have the same sum as the corresponding coefficients in $\lambda$, up to an error that can be made arbitrarily small by making $\epsilon$ small enough. Here as above we are using Lemma \ref{lem:cycles} to ensure that the contributions from the $a_\omega(x)\lambda_{\omega}$   for $\omega$ with support not entirely contained in the $B_i$ can be made negligible by making $\epsilon$ small.  Therefore the sum of the $L_{i,j}$ with $i$ varying in $\{i_{\ell},i_{\ell}+1,.., i_{\ell', i_{\ell'}+1}\}$ is at least as large as the sum of the corresponding entries of the Hessian of $\beta$, up to an error that can be taken as small as desired by making $\epsilon$ small.



Repeating this reasoning for each such $B_i$, and arguing similarly for when $j$ rather than $i$ is contained in $\cup_{\ell=1}^k  (i_{\ell} \cup(i_{\ell}+1))$, we conclude that the sum of the $L_{i,j}$ in each of these cases will be at least the sum of the corresponding entries of the Hessian of $\beta$, up to an error that can be made arbitrarily small by making $\epsilon$ small enough, again using Lemma \ref{lem:cycles}.   

The case of $i$ and $j$ contained in $\cup_{\ell=1}^k (i_{\ell} \cup(i_{\ell}+1))$ but in separate blocks  is similar. The sum of the $L_{i,j}$  with $i$ varying in $B_{m(i)}$ and $j$ varying in $B_{m(j)}$ will be the same as for the Hessian of $\beta$, up to an error that can be made arbitrarily small by making $\epsilon$ small enough. One checks this by looking separately at the contributions to the trace from the $e_i$ and $-e_j$ summands of $H_{\alpha_{i,j}}$ as $i$ and $j$ vary (grouping the former contributions by fixing $j$ and letting $i$ vary, and the latter contributions by fixing $i$ and letting $j$ vary.)




 \end{itemize}

   We can now finish the proof of  Proposition \ref{prop-hessianlowerboundhigherrank}.  Recall that we have a point $x$ with $a_{W_i}(x)>\delta(\epsilon)$ for at most $d$ walls $W_i$. We showed that in cases 1, 2, and 3 the sum of the $L_{i,j}$ is at least the sum of the corresponding entries of the Hessian of $\beta$, up to an error that can be made as small as desired by making $\epsilon$ small.  

  We need to find a lower bound on the trace of $\nabla^2 F_{\epsilon}$ through codimension $d$ subspaces.  The diagonal entries of $\nabla^2 F_{\epsilon}$ corresponding to the $V_{\alpha_{i,j}}$ are at least as large as the $L_{i,j}$. For entries of $\nabla^2 F_{\epsilon}$ in the upper left block, corresponding to the $U_i$ (i.e., tangent vectors in the Cartan coordinate direction), we note that $\nabla^2 F_{\epsilon}$ evaluated on a unit tangent vector in the span of the $U_i$  will be non-negative (and even potentially very large, although we do not use that in this part of the proof), and thus greater than  the corresponding entry of $\nabla^2 \beta$, which is equal to 0.  We are using here that because $f_{\epsilon}$ is convex, its Hessian is non-negative definite.  

  It follows that the trace of $\nabla^2 F_{\epsilon}$ through a codimension $d$ subspace is at least the sum of all of the $L_{i,j}$  minus the sum of the $d$ largest $L_{i,j}$.  Up to errors that can be made as small as desired by making $\epsilon$ small (i.e., up to $o(1) errors)$, the former  is at least $2 \langle \rho, \lambda \rangle $, while the latter is at most $d( n-1 - \frac{t}{2} )$.  For the latter we are using that, up to an $o(1)$ error, the $L_{i,j}$ are bounded above by $n-1-t/2$.  Depending on which case the $L_{i,j}$ falls into this follows either from the fact that the entries of the Hessian of $\beta$ are bounded above by $n-1-t/2$ (cases 1 and 2) up to an $o(1)$ error, or the fact that since $\nabla f_{\epsilon}$ is a convex combination of Weyl-group orbits of $\lambda$ (case 3), we know that its coordinates relative to the $e_i$ range in $[-(n-1)/2 + t/4,(n-1)/2 - t/4]$, and so its inner product with any $H_{\alpha_{i,j}}$ is bounded above by $n-1-t/2$.   This completes the proof.

  \end{proof}

   Let $S\subset X$ be a varifold, let $gK\in G/K=X$ and let $r\geq 0$. We set
	\begin{equation}
		v_S(gK,r,\epsilon):= \int_S \chi_r\left(\frac{F_{\epsilon}(s^{-1}g)}{||\lambda||}\right)d\mathcal H^k(sK)=\int_S \chi_r\left(\frac{F_{\epsilon}(g^{-1}s)}{||\lambda||}\right)d\mathcal H^k(sK), 
	\end{equation}  
    where to obtain the last equality we have used $F_{\epsilon}(h)=F_{\epsilon}(h^{-1})$, which follows from the fact that $\lambda_i=-\lambda_{n+1-i}$ observed above.  
    We have the following version of Lemma \ref{lem-VarifoldMono} in the higher rank setting.  
	\begin{lem}\label{lem-VarifoldMonohigherrank}
		\begin{enumerate}
			\item Let $S\subset X$ be a stationary varifold of codimension $d$. For any $x\in X$, and $\eta>0$, there is $\epsilon'>0$ so that for all positive $\epsilon<\epsilon'$ and $r>s>0$ we have that
			$$v_S(gK,r,\epsilon)\geq e^{ \left(\frac{2\langle \rho, \lambda \rangle - d (n-1-\frac{t}{2}) - \eta}{||\lambda||} \right)(r-s)} v_S(gK,s,\epsilon).$$
			\item There is $\delta>0$ such that for all $\epsilon$ sufficiently small $v_S(gK,0,\epsilon)\geq \delta$, for any stationary integral $k$-varifold $S\subset X$ and $\mathcal H^k$-almost every point of $S$.
		\end{enumerate}
	\end{lem}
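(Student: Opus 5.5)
Part (1) follows the proof of Lemma \ref{lem-VarifoldMono}: I apply the abstract monotonicity Lemma \ref{lem-abstmonotonicity} to a normalized, translated version of $F_\epsilon$. Fix $gK\in X$ and set
\[
f(sK):=\frac{F_{\epsilon}(g^{-1}s)}{\|\lambda\|}.
\]
Since left translation by $g^{-1}$ is an isometry of $X$, the gradient and Hessian of $f$ at $sK$ are the pullbacks of those of $F_\epsilon/\|\lambda\|$ at $g^{-1}sK$.

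\textbf{Gradient bound.} By Proposition \ref{properties}(4), $\nabla f_\epsilon$ is a convex combination of the $\lambda_\omega$, so $\|\nabla f_\epsilon\|\le\|\lambda\|$. The radial part of $\grad F_\epsilon$ at $ke^HK$ is $\nabla f_\epsilon(H)$ transported by $k$. The angular part vanishes because $F_\epsilon$ is $K$-invariant and the polar metric is $dH^2+\sum\sinh^2(\alpha(H))q_\alpha$. Hence $\|\grad F_\epsilon\|\le\|\lambda\|$ on the regular set, and by continuity everywhere. Thus $\|\grad f\|\le 1$.

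\textbf{Properness and nonnegativity.} $F_\epsilon$ is proper since $f_\epsilon(x)\ge f(x)\ge c\|x\|$: the $W$-orbit of $\lambda$ spans $\mathfrak a$, and $\lambda$ is regular because $\lambda_i-\lambda_{i+1}\ge 1/2$. Also $f_\epsilon\ge f\ge 0$, since the average of $\lambda_\omega$ over $W$ is $0$, so $\max_\omega\langle\lambda_\omega,x\rangle\ge 0$. Thus $f$ takes values in $\mathbb R_{\ge0}$.

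\textbf{Hessian bound.} Proposition \ref{prop-hessianlowerboundhigherrank} gives, for $\epsilon<\epsilon'(\eta,n,d)$ and every point and every codimension $d$ subspace $W$,
\[
\tr\Hess f|_W\ \ge\ \kappa:=\frac{2\langle\rho,\lambda\rangle-d(n-1-\tfrac t2)-\eta}{\|\lambda\|}.
\]
Lemma \ref{lem-abstmonotonicity} then yields
\[
\int_S\chi_r(f)\,d\mathcal H^k\ \ge\ e^{\kappa(r-s)}\int_S\chi_s(f)\,d\mathcal H^k,
\]
which is exactly $v_S(gK,r,\epsilon)\ge e^{\kappa(r-s)}v_S(gK,s,\epsilon)$. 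The only subtle point is that Proposition \ref{prop-hessianlowerboundhigherrank} is stated on $\mathfrak a^{+}$, that is, at regular points in Cartan coordinates. At singular points the Hessian is defined by continuity (Section \ref{sec-Hess}), and the trace lower bound over subspaces is a closed condition, so it extends to all of $X$. Smoothness of $F_\epsilon$ comes from Proposition \ref{properties}(3), so $C^2$ holds.

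\textbf{Part (2).} The identity $F_\epsilon(h)=F_\epsilon(h^{-1})$ makes the two expressions for $v_S$ agree, so I can center at the point $gK$ of $S$. By Proposition \ref{properties}(1), $f_\epsilon\to f$ uniformly on compact sets, and moreover $f_\epsilon\le f+\epsilon\log|W|$ everywhere. Hence for $\epsilon$ small, $F_\epsilon(g^{-1}s)/\|\lambda\|\le d(gK,sK)+\tfrac12$ holds, using $f(x)\le\|\lambda\|\,\|x\|$. Therefore $\chi_0(F_\epsilon(g^{-1}s)/\|\lambda\|)\ge\chi(r_0+\tfrac12)=:c>0$ whenever $d(gK,sK)\le r_0$, for a fixed $r_0<1/2$. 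Lemma \ref{lem-smallballmono} then gives, for every regular point $gK$ of $S$,
\[
v_S(gK,0,\epsilon)\ \ge\ c\,\mathcal H^k(S\cap B(gK,r_0))\ \ge\ c\,\delta(r_0).
\]
Both constants are independent of $S$ and of $\epsilon$, and regular points have full $\mathcal H^k$-measure.

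I expect the main obstacle to be the bookkeeping in part (1): checking that the Hessian lower bound of Proposition \ref{prop-hessianlowerboundhigherrank} really holds uniformly at all points of $X$, including walls of the Weyl chamber and near the origin. The gradient-norm bound must also hold globally, so that Lemma \ref{lem-abstmonotonicity} applies with a single $\epsilon'$ independent of $x$.
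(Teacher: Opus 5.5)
Your proposal is correct and follows the paper's route. You apply Lemma \ref{lem-abstmonotonicity} to the translated, normalized function $F_\epsilon(g^{-1}\cdot)/\|\lambda\|$, take the Hessian bound from Proposition \ref{prop-hessianlowerboundhigherrank} and the gradient bound from Proposition \ref{properties}(4), and obtain part (2) from Lemma \ref{lem-smallballmono}. The extra checks you supply are sound details that the paper leaves implicit: properness, nonnegativity, extending the trace bound to singular points by continuity, and the estimate $f_\epsilon\le f+\epsilon\log|W|$ that keeps $\chi_0$ uniformly positive on a fixed small ball.
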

\begin{proof}
  By Proposition \ref{prop-hessianlowerboundhigherrank}, the trace of the Hessian of $F_{\epsilon}/||\lambda||$ through any codimension $d$ tangent subspace is at least
\[
\frac{1}{||\lambda||} \left(2\langle \rho, \lambda \rangle - d (n-1-\frac{t}{2}) - \eta  \right), 
\]
\noindent provided $\epsilon$ was chosen sufficiently small. Since $F_{\epsilon}/||\lambda||$ has gradient bounded above in norm by one by Proposition \ref{properties}, we can apply Lemma \ref{lem-abstmonotonicity} to conclude the first part. The second part follows from Lemma \ref{lem-smallballmono} as in Lemma \ref{lem-VarifoldMono}.

    \end{proof}

    \subsection{General Split Simple Groups} \label{s:generalsplit}

    We describe the modifications necessary to extend the argument beyond $\SL_n(\mathbb{R})$ to the case of general split simple non-compact real groups $G$. 
    We keep the notations from before. One part of the argument that simplifies in this setting is that since we don't try to optimize the leading constant in the general case of Theorem \ref{mt-higherrank}, we are able work with $\rho$ rather than $\lambda$.

    Let $r$ be the rank of $G$ and let $\Delta$ be the set of simple roots in $\Phi^+$. For any root $\alpha\in \Phi$ let $s_\alpha$ be the reflection in $\alpha$: 
    $$s_\alpha(x):=x-\alpha(x)\check\alpha, \check\alpha:=\frac{2H_\alpha}{\|\alpha\|^2}$$
    We recall that the Weyl group is generated by the reflections in the simple roots, which we call simple reflections. The minimal number of simple reflections needed to write down $w\in W$ is the \emph{length function} $\ell(w)$. The set of simple reflections needed to write down $w\in W$ is called the support of $w$, denoted $\supp(w)$. It doesn't depend on the minimal length expression chosen to represent $w$, which in general is not unique. The support has a nice geometric interpretation we shall use in our proofs. 
    \begin{lem}\label{lem-suppgeo}
    $$  
    \supp w=\{s_\alpha|\alpha\in \Delta, \frak a^+\cap w\frak a^+\subset \ker \alpha\}.
    $$
    \end{lem}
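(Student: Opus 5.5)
We will prove the equality by establishing both inclusions, using the standard facts about parabolic subgroups of $W$. For $J\subseteq\Delta$, write $W_J$ for the subgroup generated by $\{s_\alpha\mid \alpha\in J\}$. We use three inputs.
\begin{enumerate}
\item[(i)] $\supp(w)\subseteq J$ if and only if $w\in W_J$. This follows from the exchange/deletion condition: any reduced expression of an element of $W_J$ only uses generators from $J$.
\item[(ii)] The stabilizer in $W$ of a point $x\in\mathfrak a^+$ is the parabolic subgroup $W_{J(x)}$, where $J(x)=\{\alpha\in\Delta\mid \alpha(x)=0\}$ (Chevalley's lemma).
\item[(iii)] $\mathfrak a^+$ is a fundamental domain, so any $x\in \mathfrak a^+$ with $wx\in\mathfrak a^+$ satisfies $wx=x$.
\end{enumerate}

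Set $J_w:=\{\alpha\in\Delta\mid \mathfrak a^+\cap w\mathfrak a^+\subset\ker\alpha\}$. The statement says $\supp w=\{s_\alpha\mid\alpha\in J_w\}$.

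\textbf{Inclusion $\supseteq$.} Write $F:=\mathfrak a^+\cap w\mathfrak a^+$. By (iii), every $y\in F$ is fixed by $w^{-1}$, hence by $w$. So $F$ is exactly the fixed set of $w$ in $\mathfrak a^+$; it is a closed convex cone, in fact a face of $\mathfrak a^+$. Pick $y$ in the relative interior of $F$. Then $\{\alpha\in\Delta\mid\alpha(y)=0\}$ equals $\{\alpha\in\Delta\mid \alpha|_F=0\}=J_w$. This holds because $F$ is a face cut out by simple-root hyperplanes, and a relative interior point of a face vanishes exactly on the simple roots vanishing on the whole face. By (ii), $w\in\mathrm{Stab}(y)=W_{J_w}$, and by (i), $\supp w\subseteq J_w$.

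\textbf{Inclusion $\subseteq$.} Conversely, let $J=\supp w$. Choose $y\in\mathfrak a^+$ with $\alpha(y)=0$ for $\alpha\in J$ and $\alpha(y)>0$ for $\alpha\in\Delta\setminus J$; this is possible since $\Delta$ is a basis of $\mathfrak a^*$. Every $s_\alpha$ with $\alpha\in J$ fixes $y$, so $wy=y$ and $y\in F$. Hence any $\alpha\in J_w$ satisfies $\alpha(y)=0$, which forces $\alpha\in J$. So $J_w\subseteq\supp w$.

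\textbf{Main obstacle.} The delicate step is showing that $F$ is a face of $\mathfrak a^+$ and relating its relative interior to $J_w$. We handle this by choosing instead $y=\sum_{\alpha\in\Delta\setminus J_w}$ of points of $F$ with $\alpha>0$. For each $\alpha\notin J_w$ there is some $y_\alpha\in F$ with $\alpha(y_\alpha)>0$, and since $F$ is a convex cone, the sum $y=\sum y_\alpha$ lies in $F$ and has $\alpha(y)>0$ for all $\alpha\notin J_w$. This avoids needing the face structure of $F$. The remaining inputs (i) and (ii) we cite from standard references such as Humphreys or Bourbaki.
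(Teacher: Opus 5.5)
Your proof is correct and follows the same route as the paper. For $\supp w$ inside the right-hand side, you use the strict fundamental domain to see that $w$ fixes $F=\mathfrak a^+\cap w\mathfrak a^+$ pointwise, then apply the point-stabilizer theorem (Humphreys, Theorem 1.12(a)); for the reverse inclusion, you exhibit a point of $\mathfrak a^+\cap\bigcap_{\alpha\in\supp w}\ker\alpha$, which $w$ fixes. Your summed point $y=\sum_\alpha y_\alpha$ makes explicit the choice of a point of $F$ where exactly the roots in $J_w$ vanish, which the paper leaves implicit; note only that your labels ``$\supseteq$'' and ``$\subseteq$'' are swapped relative to how the statement is written.
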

    In other words, the support of $w$ consists of reflections in those simple roots whose corresponding faces contain the intersection of the Weyl chamber $\frak a^+$ with its image by $w$. 
    \begin{proof}Since $W$ acts on $\frak a$ with a strict fundamental domain $\frak a^+$ \cite[Thm. 11.2(c)]{humphreys1990reflection}, the intersection $F:=\frak a^+\cap w\frak a^+$ must be pointwise fixed by $w$, moreover it has to be the intersection of $\frak a^+$ with $\frak a^w:=\{X\in \frak a | wX=X\}$. By the point stabilizer theorem \cite[Theorem~1.12(a)]{humphreys1990reflection}, the pointwise stabilizer of $F$ is generated by the reflections in the simple roots $\alpha$ vanishing on $F$. In particular, $w$ can be generated using only reflections in such $\alpha$, hence $\supp w\subset \{s_\alpha|\alpha\in \Delta, \frak a^+\cap w\frak a^+\subset \ker \alpha\}$. For the reverse inclusion, we note that  $w$ can be written as a product of $s_{\alpha},\alpha\in \supp w$ so $\frak a^+\cap \bigcap_{\alpha\in\supp w}\ker\alpha$ is a subset of $\frak a^+$ pointwise fixed by $w$. It is therefore contained in  $\frak a^+\cap \frak a^w=\frak a^+\cap w\frak a^+\subset \cap_{\frak a^+\cap w\frak a^+\subset \ker\alpha}\ker\alpha$. We deduce $\{s_\alpha|\alpha\in \Delta, \frak a^+\cap w\frak a^+\subset \ker \alpha\}\subset \supp w$.
    \end{proof}

    Just like in the $\SL_n(\mathbb R)$ case, we use the function $f_\varepsilon$ given by the formula (\ref{eq-softmaxf}). We choose an auxiliary parameter $\delta=\varepsilon^{1/2}>0$. The power $1/2$ is not important, as the argument would work the same for any fixed positive power of $\varepsilon$ less than one. For $x\in\frak a^+$ let $\Delta_x$ denote the set of simple roots $\alpha$ such that $a_{s_\alpha}(x)\geq \delta.$ Let $\frak a_x^*$ be the subspace of $\frak a^*$ spanned by $\alpha\in\Delta_x$, similarly let $\frak a_x$ be the dual of $\frak a_x^*.$ We define $W_x$ as the subgroup of the Weyl group generated by the simple reflections $s_\alpha,\alpha\in \Delta_x.$ Finally define $\Phi^+_x=\Phi^+\cap \frak a_x^*$.

    Lemma \ref{lem-gradientisinthepositiveweylchamber} holds for simple Lie groups in general, and we now give the proof in the general case. 
    \begin{proof}[Proof of Lemma \ref{lem-gradientisinthepositiveweylchamber} in general case]

    Formula \ref{properties} (\ref{eq-softmaxfd2}) for the second derivative implies that $D^2 f_\varepsilon(x)(H_\alpha,H_\alpha)\geq 0$ for any simple root $\alpha\in \Delta$ and $x\in \frak a$. As $x\in\frak a^+$, there is a $t_0\geq 0$ such that $\alpha(x-t_0 H_\alpha)=0$. By the symmetry of $f_\varepsilon$ under the reflection $s_\alpha$, $\langle \nabla f_\varepsilon(x-t_0H_\alpha),H_\alpha\rangle =0,$ hence $$\langle \nabla f_\varepsilon(x),H_\alpha\rangle =\int_0^{t_0} D^2 f_\varepsilon(x+(t-t_0)H_\alpha)(H_\alpha,H_\alpha)dt\geq 0,$$ as desired.
    \end{proof}

    Likewise, Lemma \ref{largehessian} extends to the general split case. 
    \begin{lem}\label{lem-largehessiangen}
    For $\varepsilon$ small enough the following holds. Let $x\in \frak a^+$. Any vector $v\in \frak a_x$ satisfies 
    $$D^2 f_\varepsilon(x)(v,v)\geq 1000 \|\rho\|^2\|v\|^2.$$
    \end{lem}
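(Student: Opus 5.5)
Since the Hessian formula of Proposition \ref{properties}(\ref{eq-softmaxfd2}) is a sum of non-negative terms, we keep only the pairs $(\omega_1,\omega_2)=(e,s_\alpha)$ and $(s_\alpha,e)$ for $\alpha\in\Delta_x$. For $v\in\frak a$ this gives
$$D^2 f_\varepsilon(x)(v,v)\geq \frac{1}{\varepsilon}\sum_{\alpha\in\Delta_x} a_e(x)a_{s_\alpha}(x)\,\langle \rho-s_\alpha\rho, v\rangle^2 .$$
Here $\rho-s_\alpha\rho=\rho(\check\alpha)H_\alpha$, and $\rho(\check\alpha)=1$ for simple $\alpha$. (Here $\lambda=\rho$ in the general split case; if a general $\lambda$ were used, $\lambda(\check\alpha)$ is still bounded below by a positive constant.) So the $(\alpha)$-term equals $\frac{1}{\varepsilon}a_e a_{s_\alpha}\alpha(v)^2$.

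Next we bound the weights. For $x\in\frak a^+$ we have $\langle w\rho,x\rangle\le\langle\rho,x\rangle$ for every $w\in W$, since $\rho-w\rho$ is a non-negative combination of positive roots. Hence $a_e(x)$ is the largest weight and $a_e(x)\geq 1/|W|$. By the definition of $\Delta_x$, $a_{s_\alpha}(x)\geq\delta=\varepsilon^{1/2}$. Therefore
$$D^2 f_\varepsilon(x)(v,v)\geq \frac{1}{|W|\,\varepsilon^{1/2}}\sum_{\alpha\in\Delta_x}\alpha(v)^2 .$$

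It remains to bound $\sum_{\alpha\in\Delta_x}\alpha(v)^2$ below by $c\|v\|^2$ uniformly for $v\in\frak a_x$. Here I interpret $\frak a_x$ as the subspace of $\frak a$ spanned by the $H_\alpha$, $\alpha\in\Delta_x$, which is naturally dual to $\frak a_x^*$. The simple roots in $\Delta_x$ are linearly independent, so the restrictions $\alpha|_{\frak a_x}$, $\alpha\in\Delta_x$, form a basis of $\frak a_x^*$. Consequently $Q_x(v)=\sum_{\alpha\in\Delta_x}\alpha(v)^2$ is positive definite on $\frak a_x$. Let $c_x>0$ be its minimum on the unit sphere. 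Since $\Delta_x$ ranges over the finitely many subsets of $\Delta$, we may set $c=\min_{I\subset\Delta}c_I>0$, which depends only on $G$. This is exactly the linear algebra fact used in Lemma \ref{largehessian}.

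Combining the two bounds gives $D^2f_\varepsilon(x)(v,v)\geq c\,|W|^{-1}\varepsilon^{-1/2}\|v\|^2$. This exceeds $1000\|\rho\|^2\|v\|^2$ once $\varepsilon<(c/(1000|W|\|\rho\|^2))^2$, a threshold independent of $x$.

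The only points needing care are the identification of $\frak a_x$ and the uniformity of $c$. With the finite-subset argument above, neither is a real obstacle.
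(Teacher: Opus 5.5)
Your proposal is correct and is essentially the paper's proof: discard every term of the Hessian formula except those pairing $e$ with $s_\alpha$ for $\alpha\in\Delta_x$, use $\rho-s_\alpha\rho=\alpha$, $a_e(x)\geq 1/|W|$ and $a_{s_\alpha}(x)\geq\varepsilon^{1/2}$, and finish with norm equivalence on $\mathfrak a_x$. Your version is slightly more careful than the paper's: keeping both orderings $(e,s_\alpha)$ and $(s_\alpha,e)$ gains a harmless factor of $2$, you justify why $a_e$ is the largest weight, and you make explicit that the norm-comparison constant is uniform in $x$ because $\Delta_x$ ranges over finitely many subsets of $\Delta$.
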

    \begin{proof}
    Recall that $\delta=\varepsilon^{1/2}$ and let $v\in \frak a$ be a unit vector in $\frak a_x$. We note that $\rho-s_\alpha\rho= \alpha, \alpha\in \Delta$ and that $a_1(x)\geq 1/|W|.$ It follows
    \begin{align*}
    \nabla^2 f_\varepsilon(x)(v,v)=&\frac{1}{2\epsilon}\sum_{w,w'\in W}a_w(x)a_{w'}(x)(w\rho-w'\rho)(v)^2\\
    \geq& \frac{1}{2\varepsilon}\sum_{\alpha\in \Delta_x}a_1(x)a_{s_\alpha}(x)(\rho-s_\alpha \rho)(v)^2\\
    \geq& \frac{1}{2|W|\varepsilon}\varepsilon^{1/2}\sum_{\alpha\in \Delta_x} \alpha(v)^2\geq \frac{\varepsilon^{-1/2}}{2|W|C}\|v\|^2,
    \end{align*}
    where $C$ is the constant from the norm comparison theorem applied to the space $\frak a_x$. By taking $\varepsilon$ small enough we can ensure that  $\varepsilon^{-1/2}/(2|W|C)\geq 1000 \|\rho\|^2.$
    \end{proof}

The following Lemma generalizes Lemma \ref{lem:cycles} (or rather its proof does, which also works for $\rho$ replaced by $\lambda$ from the previous section.) Here the proof gets slightly more complicated, as we are allowed only to use general facts about root systems.
    \begin{lem}\label{lem-supprtWeyl}
    Let $x\in\frak a^+$. Suppose $w\in W$ is such that $a_w(x)>\delta$. Then $ \supp w \subset \Delta_x$. 
    \end{lem}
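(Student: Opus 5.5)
The plan is to compare $a_w(x)$ with $a_{s_\alpha}(x)$ for each simple root $\alpha\in\supp w$, using the explicit softmax weights. From the definition of the weights in Proposition \ref{properties},
$$\frac{a_w(x)}{a_1(x)}=e^{-(\rho-w\rho)(x)/\varepsilon},\qquad \frac{a_{s_\alpha}(x)}{a_1(x)}=e^{-(\rho-s_\alpha\rho)(x)/\varepsilon}=e^{-\alpha(x)/\varepsilon},$$
where we use $\rho-s_\alpha\rho=\alpha$ for $\alpha\in\Delta$, as in the proof of Lemma \ref{lem-largehessiangen}. It therefore suffices to prove the root-theoretic inequality
$$(\rho-w\rho)(x)\ \geq\ \alpha(x)\quad\text{for all } x\in\frak a^+,\ \alpha\in\supp w. \tag{$*$}$$
Granting $(*)$, we get $a_{s_\alpha}(x)\geq a_w(x)>\delta$, so $\alpha\in\Delta_x$, which is exactly $\supp w\subset\Delta_x$.

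To prove $(*)$, I would use the standard identity $\rho-w\rho=\sum_{\beta\in N(w)}\beta$, where $N(w)=\Phi^+\cap w\Phi^-$ is the inversion set. In the split case all multiplicities equal one, so $2\rho$ is just the sum of positive roots. The identity is proved by induction on $\ell(w)$ using $\rho-s_\gamma\rho=\gamma$. Next fix a reduced expression $w=s_{\gamma_1}\cdots s_{\gamma_\ell}$, so that
$$N(w)=\{\beta_j:=s_{\gamma_1}\cdots s_{\gamma_{j-1}}\gamma_j \mid 1\le j\le \ell\},$$
a set of $\ell$ distinct positive roots. Expand every $\beta_j$ in the basis $\Delta$. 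Each $\beta_j$ is a positive root, so all of its coefficients are non-negative. Since $x\in\frak a^+$, every simple root is non-negative on $x$. Hence $(\rho-w\rho)(x)\geq c_\alpha\,\alpha(x)$, where $c_\alpha$ is the total $\alpha$-coefficient of $\sum_j\beta_j$, and it is enough to show $c_\alpha\geq 1$ for each $\alpha\in\supp w$.

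For this, take $j$ to be the first index with $\gamma_j=\alpha$, which exists because $\alpha\in\supp w$ and the support is independent of the reduced word. A simple reflection $s_\gamma$ with $\gamma\neq\alpha$ sends $\mu$ to $\mu-\langle\mu,\gamma^\vee\rangle\gamma$, so it only changes the $\gamma$-coefficient and leaves the $\alpha$-coefficient unchanged. Since $\gamma_1,\dots,\gamma_{j-1}$ are all different from $\alpha$, the root $\beta_j$ has $\alpha$-coefficient equal to that of $\alpha$ itself, namely $1$. The remaining $\beta_i$ contribute non-negatively, so $c_\alpha\geq1$.

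The main obstacle is this combinatorial step $c_\alpha\ge1$: it must use only general facts about root systems, namely the inversion-set description of $\rho-w\rho$ and the fact that the support does not depend on the reduced word, not $\SL_n$-specific cycle structure. Everything else is a direct computation with the weights $a_w$. Lemma \ref{lem-suppgeo} is not needed, although one could alternatively argue geometrically with it. The same argument works verbatim with $\rho$ replaced by $\lambda$, provided $\lambda-s_\alpha\lambda$ is a positive multiple of $\alpha$ and $\lambda$ is dominant regular, with the constant adjusted accordingly.
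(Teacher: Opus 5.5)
Your proof is correct, but it takes a different route from the paper's.

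\textbf{The paper's route.} The paper proves the more general bilinear inequality $\langle s_\alpha y - wy, x\rangle\geq 0$ for all $x,y\in\mathfrak a^+$ and $\alpha\in\supp w$, and only at the end specializes to $y=H_\rho$ to get $a_{s_\alpha}(x)\geq a_w(x)$. The steps are:
\begin{enumerate}
\item By bilinearity it reduces to fundamental coweights $x=\check w_\gamma$, $y=\check w_\beta$.
\item It handles the off-diagonal cases via the claim $\langle w\check w_\beta,\check w_\gamma\rangle\leq\langle \check w_\beta,\check w_\gamma\rangle$, proved by induction on $\ell(w)$.
\item It treats $\beta=\gamma=\alpha$ by an integrality argument in the coroot lattice. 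This uses Lemma~\ref{lem-suppgeo} to guarantee $w\check w_\alpha\neq\check w_\alpha$.
\end{enumerate}

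\textbf{Your route.} You specialize to $\rho$ from the outset. You then read off $(\rho-w\rho)(x)\geq\alpha(x)$ directly from the inversion-set expansion of $\rho-w\rho$, plus a single coefficient count along a reduced word. This is shorter and avoids both the case analysis and the geometric characterization of the support. It also gives slightly sharper information: every $\alpha\in\supp w$ occurs with coefficient at least $1$ in $\rho-w\rho$.

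\textbf{What the paper's formulation buys.} It covers all dominant $y$ at once. That is what the paper invokes when it says the argument also works for $\lambda=\rho-\frac t2\theta$ in the $\SL(n,\mathbb R)$ section.

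\textbf{Extending your argument to $\lambda$.} Your method handles $\lambda$ as well, but not verbatim as you claim. For $\lambda\neq\rho$ the identity $\lambda-w\lambda=\sum_{\beta\in N(w)}\beta$ is false. Use instead the telescoping identity $\lambda-w\lambda=\sum_{j=1}^{\ell}\langle\lambda,\gamma_j^\vee\rangle\,\beta_j$ along your reduced word.
\begin{itemize}
\item When $\lambda$ is dominant, every term has nonnegative simple-root coefficients.
\item The first index $j$ with $\gamma_j=\alpha$ contributes exactly $\langle\lambda,\alpha^\vee\rangle$ to the $\alpha$-coefficient.
\item Hence $(\lambda-w\lambda)(x)\geq\langle\lambda,\alpha^\vee\rangle\alpha(x)=(\lambda-s_\alpha\lambda)(x)$ for $x\in\mathfrak a^+$.
\end{itemize}
This is precisely the paper's general inequality. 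Only dominance of $\lambda$ is needed, not regularity.
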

    \begin{proof}
    We will first show that 
    \begin{equation}\label{eq-suppinequality} \langle s_\alpha y -wy,x\rangle \geq 0
    \end{equation}for every $x,y\in \frak a^+$ and $\alpha\in \supp w.$ The lemma will then follow relatively quickly.

    Inequality (\ref{eq-suppinequality}) is bi-linear in $x,y$, so it is enough to prove it for $x,y$ in one dimensional faces of $\frak a^+$, as every other $x,y\in \frak a^+$ is a convex combination of such vectors. Each of these faces is spanned by a fundamental co-weight $\check{w}_\beta, \beta\in \Delta$. We recall that $\eta(\check{w}_\beta)=\delta_{\eta\beta},\eta\in \Delta,$ so the one dimensional face corresponding to $\check{w}_\beta$ is the intersection of all the faces other than $\ker \beta\cap \frak a^+.$  We have thus reduced checking (\ref{eq-suppinequality}) to the inequality 
    \begin{equation}\label{eq-fundweightineq}\langle s_\alpha \check w_\beta -w\check w_\beta,\check w_\gamma\rangle \geq 0,\end{equation} for all $\beta,\gamma\in \Delta.$ 
    We record the identity $s_\beta (\check w_\gamma)=\check{w}_\gamma - \delta_{\gamma\beta}\check\beta$, $\check\beta=2H_\beta/\|\beta\|^2$
    , which shall be used frequently in the proof.

    In case $\beta\neq\alpha,$ we have $s_\alpha(\check{w}_\beta)=\check{w}_\beta$ so 
    $$\langle s_\alpha \check w_\beta -w\check w_\beta,\check w_\gamma\rangle=\langle \check w_\beta -w\check w_\beta,\check w_\gamma\rangle.$$
    \textbf{Claim}  $\langle \check w_\beta, \check w_\gamma\rangle \geq \langle w\check w_\beta,\check w_\gamma\rangle$ for all $w\in W$. We proceed by induction on $\ell(w).$ Write $w=w's_\eta,\eta\in \Delta, \ell(w')=\ell(w)-1$. If $\eta\neq \beta$ then 
    $\langle w\check w_\beta,\check w_\gamma\rangle=\langle w'\check w_\beta,\check w_\gamma\rangle$ and we conclude by the inductive hypothesis. If $\eta=\beta,$ then $\langle w\check w_\beta,\check w_\gamma\rangle=\langle w'\check w_\beta - w' \check \beta,\check w_\gamma\rangle.$ By \cite[Thm 1.7]{humphreys1990reflection}, $\ell(w')=\ell(ws_\beta)<\ell(w)$ implies $w\beta\in -\Phi^+,$ hence $w' \check \beta=ws_\beta \check \beta=-w \check \beta$, which is a positive multiple of an element of $\Phi^+$. Therefore $\langle  w' \check \beta,\check w_\gamma\rangle\geq 0$ 
    and $$\langle w'\check w_\beta - w' \check \beta,\check w_\gamma\rangle\leq \langle w'\check w_\beta,\check w_\gamma\rangle.$$ We can again conclude by the inductive hypothesis. The claim is proved. 
    
    Using the claim we get $\langle \check w_\beta -w\check w_\beta,\check w_\gamma\rangle\geq 0$ which finishes the proof of (\ref{eq-fundweightineq}) in the first case. 

    In case $\gamma\neq\alpha$, we have
    $$\langle s_\alpha \check w_\beta -w\check w_\beta,\check w_\gamma\rangle=\langle \check w_\beta,s_\alpha \check w_\gamma\rangle  -\langle w\check w_\beta,\check w_\gamma\rangle=\langle \check w_\beta, \check w_\gamma\rangle  -\langle w\check w_\beta,\check w_\gamma\rangle,$$ which is handled by the Claim similarly to the previous case.
    
    Finally we consider the case $\beta=\gamma=\alpha.$  Note that $s_\alpha(\check w_\alpha)=\check w_\alpha-\check\alpha$.  We can write
    \[
    \check w_{\alpha} - w \check w_{\alpha} = \sum_{\beta \in \Delta} m_\beta \check \beta  
    \]

    \noindent for integers $m_{\beta}$.  We then have using that $w$ is orthogonal that  
    \[
    \frac{2m_{\alpha}}{||\alpha||^2}= \langle \check w_{\alpha} - w \check w_{\alpha}, \check w_{\alpha} \rangle  = \frac{1}{2} || \check w_{\alpha} - w \check w_\alpha ||^2>0.
    \]
    That $\check w_{\alpha} \neq w \check w_\alpha$ follows from the fact that otherwise $\check w_\alpha\in \frak a^+\cap w\frak a^+, $ contradicting $\frak a^+\cap w\frak a^+\subset \ker \alpha.$ We have by integrality that $m_{\alpha} \geq 1$, so putting everything together we get: 
     \[
     \langle s_{\alpha} \check w_{\alpha} - w \check w_{\alpha}, \check w_{\alpha} \rangle = \langle \check w_{\alpha}- w \check w_{\alpha} -\check\alpha, \check w_{\alpha} \rangle = \frac{2(m_{\alpha}-1)}{||\alpha||^2} \geq 0, 
     \]
    \noindent which concludes the last case of (\ref{eq-fundweightineq}).


    We have proved (\ref{eq-suppinequality}). Let us now finish the proof of the lemma. Suppose $x\in \frak a^+,w\in W$ and $a_w(x)\geq \delta.$ Let $y=H_\rho$ be the vector satisfying $\langle y,x\rangle=\rho(x).$ Then $y\in \frak a^+$ and (\ref{eq-suppinequality}) yields $s_\alpha\rho(x)\geq w\rho(x)$ for all $\alpha\in\supp w$ and $x\in \mathfrak{a}^+$. This directly implies $a_{s_\alpha}(x)\geq a_w(x)\geq \delta, \alpha\in\supp w,$ so $\supp w\subset \Delta_x$
    \end{proof}

    \begin{lem} \label{lem:1046}
    For $\varepsilon$ small enough the following holds. Let $x\in \frak a^+,k\in K$ and let $\alpha\in \Phi_x^+$
    $$D^2 F_\varepsilon(ke^x)(V_{\alpha,i},V_{\alpha,i})\geq  1000 \|\rho\|^2,\, i=1,\ldots,m_\alpha$$
    \end{lem}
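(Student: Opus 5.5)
By Proposition \ref{prop-CartanHess}, the quantity to bound is
$$D^2F_\varepsilon(ke^x)(V_{\alpha,i},V_{\alpha,i})=\coth(\alpha(x))\,\alpha(\nabla f_\varepsilon(x)).$$
For $x$ on a wall this is understood by continuity. The plan is to show that $\alpha(\nabla f_\varepsilon(x))$ is at least $C_\varepsilon\,\alpha(x)$ with $C_\varepsilon\to\infty$. Since $t\coth t\geq 1$ and $\coth\geq 1$, this gives the claim once $\varepsilon$ is small. This is the same scheme as Case 2 in the proof of Proposition \ref{prop-hessianlowerboundhigherrank}, now written root-theoretically.

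\textbf{Step 1 (simple roots).} Take $\beta\in\Delta_x$, so that $a_{s_\beta}(x)\geq\delta$. Since $\rho-s_\beta\rho=\beta$, we have $a_{s_\beta}=a_1e^{-\beta(x)/\varepsilon}$, and $a_1\leq 1$. Hence
$$0\leq\beta(x)\leq\varepsilon\log(1/\delta).$$

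\textbf{Step 2 (lower bound on $\beta(\nabla f_\varepsilon)$).} For $x\in\mathfrak a^+$, integrate the Hessian along $H_\beta$ exactly as in the proof of Lemma \ref{lem-gradientisinthepositiveweylchamber}. Set $t_0=\beta(x)/\|\beta\|^2$. The $s_\beta$-symmetry kills the gradient component at the foot point $x-t_0H_\beta$, so
$$\beta(\nabla f_\varepsilon(x))=\int_0^{t_0}D^2f_\varepsilon(x+(t-t_0)H_\beta)(H_\beta,H_\beta)\,dt.$$
Along this segment, $\beta$ decreases from $\beta(x)$ to $0$ and the other simple roots increase. I expect the following weights to be controlled there: $a_1\gtrsim 1/|W|$ (this needs care, because the segment may leave $\mathfrak a^+$ only through the wall $\ker\beta$), and $a_{s_\beta}\geq a_1e^{-\beta/\varepsilon}\gtrsim\delta/|W|$, using Step 1. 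Keeping only the pair $(1,s_\beta)$ in Proposition \ref{properties}(\ref{eq-softmaxfd2}), the integrand is then at least $\frac{c\,\delta}{|W|^2\varepsilon}\|\beta\|^4$. This yields
$$\beta(\nabla f_\varepsilon(x))\geq C_\varepsilon\,\beta(x),\qquad C_\varepsilon\asymp\varepsilon^{-1/2}.$$
Alternatively, one can reproduce the pairing argument $w\leftrightarrow s_\beta w$ together with the concavity trick used in (\ref{ineq:iiplusone}).

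\textbf{Step 3 (general $\alpha\in\Phi^+_x$).} Write $\alpha=\sum_{\beta\in\Delta_x}n_\beta\beta$ with nonnegative integers $n_\beta$. This is possible because $\Phi^+_x=\Phi^+\cap\mathfrak a_x^*$ is the positive system of the parabolic subsystem spanned by $\Delta_x$. By Lemma \ref{lem-gradientisinthepositiveweylchamber} each $\beta(\nabla f_\varepsilon(x))\geq 0$, so linearity and Step 2 give
$$\alpha(\nabla f_\varepsilon(x))\geq C_\varepsilon\,\alpha(x).$$
Therefore
$$D^2F_\varepsilon\geq C_\varepsilon\,\alpha(x)\coth\alpha(x)\geq C_\varepsilon\geq 1000\|\rho\|^2$$
for $\varepsilon$ small. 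The required smallness depends only on $G$.

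\textbf{Main obstacle.} The hardest part is Step 2: obtaining a uniform lower bound on $a_{s_\beta}a_1$ along the whole segment, rather than only at $x$. Near the wall $\ker\beta$, $a_{s_\beta}$ only grows toward $a_1$, and it can be shown that $a_1$ stays comparable to the maximum weight. If this approach becomes messy, I would switch to the discrete pairing argument of Case 2. That argument bounds $\beta(\nabla f_\varepsilon)\geq\|\beta\|^2(a_1-a_{s_\beta})$, using that each pair $\{w,s_\beta w\}$ contributes nonnegatively, and then applies the concavity estimate to $1-e^{-\beta(x)/\varepsilon}$ on $[0,\varepsilon\log(1/\delta)]$.
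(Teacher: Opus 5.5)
Your argument is correct. Step 1, Step 3 and the final $\coth$ estimate are exactly the paper's. The only difference is in Step 2, where the paper uses precisely the fallback you describe. It pairs $w$ with $s_\beta w$ in $\beta(\nabla f_\varepsilon(x))=\sum_{w}a_w(x)\langle w\rho,\beta\rangle$ and notes that each pair contributes $(a_w-a_{s_\beta w})\langle w\rho,\beta\rangle\ge 0$. It then keeps only the pair $\{1,s_\beta\}$ and linearizes $1-e^{-\beta(x)/\varepsilon}$ on $[0,\varepsilon\log(1/\delta)]$ by concavity.

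Your primary route, integrating $D^2 f_\varepsilon(\cdot)(H_\beta,H_\beta)$ from the wall, also closes, and the obstacle you flag is not a real one. For $y=x-sH_\beta$ with $0\le s\le t_0$ one has $\beta(y)\ge 0$. For every simple $\gamma\neq\beta$ one has $\gamma(y)=\gamma(x)-s\langle\gamma,\beta\rangle\ge\gamma(x)\ge 0$, since distinct simple roots have non-positive inner product. So the whole segment lies in $\mathfrak a^+$, where $a_1$ is the largest weight, hence $a_1(y)\ge 1/|W|$. Moreover $a_{s_\beta}(y)/a_1(y)=e^{-\beta(y)/\varepsilon}\ge e^{-\beta(x)/\varepsilon}\ge a_{s_\beta}(x)\ge\delta$. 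The integrand is therefore at least $\delta\|\beta\|^4/(|W|^2\varepsilon)$, which gives $\beta(\nabla f_\varepsilon(x))\ge \frac{\delta\|\beta\|^2}{|W|^2\varepsilon}\,\beta(x)$.

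Compared with the paper, your version reuses the mechanisms of Lemma \ref{lem-gradientisinthepositiveweylchamber} and Lemma \ref{lem-largehessiangen}. It also makes the factor $\beta(x)$ transparent, since it is just the length of the segment. The cost is that you must control the weights along a path, and you only get $C_\varepsilon\sim\varepsilon^{-1/2}$. The paper's pairing argument is purely pointwise at $x$ and gives the larger $C_\varepsilon\sim 1/(\varepsilon\log(1/\varepsilon))$. Since only $C_\varepsilon\to\infty$ is needed, either route suffices.
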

    \begin{proof}

The argument is similar to the argument in case 2 of the proof of Proposition \ref{prop-hessianlowerboundhigherrank}, from which we keep the notation that was introduced. Recall that we set 
\[
\varsigma = \varsigma(x) = \nabla f_{\epsilon}(x).  
\]

For each simple root $\beta$, set $q_{\beta}= \langle \rho, \check{\beta} \rangle = 2 \frac{\langle \rho, \beta \rangle }{||\beta||^2}$. Note that $\rho - s_{\beta} \rho = q_{\beta} \beta$, and that $a_{s_\beta}(x) = a_e(x) e^{-q_{\beta} \beta(x)/\epsilon}$.  If $\beta \in \Delta_x$, then using  $a_{s_{\beta}}(x) \geq \delta=\epsilon^{1/2}$ (Lemma \ref{lem-supprtWeyl}) and $a_{e}(x) \leq 1$, we have that 
\begin{equation} \label{eq:splitbeta}
0 \leq \beta(x) \leq \frac{\epsilon}{q_{\beta}} \log (1/\delta).
\end{equation}

As in the $SL(n,\mathbb{R})$ case, we partition the Weyl group into pairs $w, s_\beta w$.  It follows from the fact that 
\[
\frac{a_{s_\beta w}}{a_w} =  e^{-\frac{ 2\langle w \rho, \beta \rangle \beta(x)}{\epsilon || \beta||^2}} 
\]
that the contribution of each pair $w, s_\beta w$, 
\[
(a_w - a_{s_\beta w}) \langle w \rho, \beta \rangle,  
\]
is nonnegative. We therefore have that 

\begin{align} \label{ineq:firstpair}
\langle \varsigma(x), \beta \rangle &\geq \langle \rho, \beta \rangle (a_e(x) - a_{s_{\beta}}(x))  \\
&\geq  \frac{\langle \rho , \beta \rangle }{|W|} \left( 1 - e^{-q_{\beta} \beta(x)/\epsilon} \right),    
\end{align}
where we have used the pair $e,s_{\beta}$ to get a lower bound for the whole inner product, and that $a_{e}(x) \geq 1/|W|$ as the largest weight.  

We use the same concavity argument as in Case 2 of Proposition \ref{prop-hessianlowerboundhigherrank}, applied this time to the function
\[
h(u)= 1 - e^{-q_{\beta} u/\epsilon}, 
\]
for which $h(0)=0$ and $h(U)=h\left(\frac{\epsilon}{q_{\beta}} \log (1/\delta)\right)= 1- \delta$.  Setting $u= \beta(x)$, by (\ref{eq:splitbeta}) we have that $0 \leq u \leq U$, so taking the lower bound on $h(u)$ from concavity of $h$ and substituting into \eqref{ineq:firstpair}, we get 
\begin{equation}
\langle \varsigma, \beta \rangle \geq \frac{ q_{\beta} \langle \rho , \beta \rangle (1-\delta)}{|W| \epsilon \log (1/\delta)} \beta(x). 
\end{equation}

\noindent We now set  
\[
C_{\epsilon}= \frac{ 1-\delta}{|W| \epsilon \log (1/\delta)} \left( \min_{\beta \in \Delta}  \frac{2 \langle \rho, \beta \rangle^2 }{||\beta||^2}\right). 
\]
and we have (recalling that $q_{\beta}= 2 \frac{\langle \rho, \beta \rangle }{||\beta||^2}$)

\begin{equation} \label{ineq:simpleceps}
\langle \varsigma(x), \beta \rangle \geq C_{\epsilon} \beta(x) 
\end{equation}
for all $\beta \in \Delta_x$.   Since $\delta = \epsilon^{1/2} $, $C_{\epsilon}$ tends to infinity as $\epsilon \to 0$.  

We now handle the non-simple elements $\alpha \in \Phi_x^+$. For such an $\alpha$, we can write its simple root expansion as: 
\begin{equation} \label{1341}
\alpha= \Sigma_{\beta \in \Delta_x} m_\beta \beta, \hspace{2mm} m_{\beta} \geq 0.  
\end{equation}
Multiplying \eqref{ineq:simpleceps} by $m_\beta$ and summing over the $\beta$ that appear in \eqref{1341}, it follows that: 
\begin{equation} \label{eqn:1345}
Df_{\epsilon} (x) (H_{\alpha}) \geq C_{\epsilon} \alpha(x) 
\end{equation}
By Proposition \ref{prop-CartanHess} and the computation from the proof of Lemma \ref{lem-gradientisinthepositiveweylchamber}, 
     \begin{equation*}D^2 F_\varepsilon(ke^x)(V_{\alpha,i}, V_{\alpha,i})=\coth(\alpha(x))Df_\varepsilon(x)(H_\alpha)\geq \frac{1}{\alpha(x)}Df_\varepsilon(x)(H_\alpha).
     \end{equation*}
Therefore by \eqref{eqn:1345}, the rightmost term above is bounded below by $C_{\epsilon}$, so choosing $\epsilon$ small enough that $C_{\epsilon}> 1000 ||\rho||^2$ concludes the proof of the Lemma.

    \end{proof}
    As before, we let $F_\varepsilon(gK):=f_\varepsilon(a(g))$.
    We can now prove the analogue of Proposition \ref{prop-hessianlowerboundhigherrank}. 
    \begin{prop}
    Let $G$ be a split noncompact simple real Lie group of rank $r$. then for any $\eta>0$, for $\varepsilon>0$ small enough the following holds true. For any $x\in X$, any subspace $S\subset T_{x} X$ of codimension $d$ we have 
    $$\tr \nabla^2 F_\varepsilon(x)\mid_S\geq 2\|\rho\|^2- d \max _{\alpha \in \Phi^+} \langle \rho, \alpha \rangle - \eta.$$
    \end{prop}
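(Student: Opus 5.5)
We work at a regular point $x = ke^{H}K$ with $H\in\fa^{++}$; singular points follow by continuity. Proposition \ref{prop-CartanHess} gives an orthonormal eigenbasis: $\{U_\xi\}$, on which the Hessian is $D^2 f_\varepsilon(H)$, and the $V_\alpha$ for $\alpha\in\Phi^+$ ($G$ split, so $m_\alpha=1$), with eigenvalue
\[
\mu_\alpha := \coth(\alpha(H))\,\langle \nabla f_\varepsilon(H),H_\alpha\rangle .
\]
By Lemma \ref{lem-gradientisinthepositiveweylchamber}, $\nabla f_\varepsilon(H)\in\fa^+$, so every $\mu_\alpha\ge 0$; convexity of $f_\varepsilon$ gives $D^2 f_\varepsilon\ge 0$. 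The Hessian is therefore non-negative definite. Its trace over a codimension $d$ subspace is at least the sum of all eigenvalues except the $d$ largest, by Cauchy interlacing or the minimax characterization.

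\textbf{Case A: $\dim \fa_H > d$.} Here we write $\Delta_H$, $\fa_H$, $\Phi^+_H$ for the objects defined before Lemma \ref{lem-largehessiangen} at $H$. Any codimension $d$ subspace $S$ meets the $(\dim\fa_H)$-dimensional subspace $\{U_\xi : \xi\in\fa_H\}$ nontrivially. Lemma \ref{lem-largehessiangen} then gives a unit vector in $S$ with Hessian value at least $1000\|\rho\|^2$. Completing it to an orthonormal basis of $S$ and using non-negativity, the trace is at least $1000\|\rho\|^2 \ge 2\|\rho\|^2$, and we are done.

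\textbf{Case B: $|\Delta_H|\le d$.} Write $\varsigma=\nabla f_\varepsilon(H)=\sum_w a_w(H)\,w\rho$. We split $\varsigma=\varsigma'+E$, where $\varsigma'$ sums over $w\in W_H$ and $E$ collects the remaining terms. By Lemma \ref{lem-supprtWeyl}, every $w\notin W_H$ has $a_w\le\delta=\varepsilon^{1/2}$, so $\|E\|\le |W|\,\delta\,\|\rho\|\to0$.

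The roots split into $\Phi^+_H$ and $\Phi^+\setminus\Phi^+_H$.
\begin{itemize}
\item For $\alpha\in\Phi^+_H$, Lemma \ref{lem:1046} gives $\mu_\alpha\ge 1000\|\rho\|^2\ge \langle\rho,\alpha\rangle$.
\item For $\alpha\notin\Phi^+_H$, we use $\coth\ge1$ and $\mu_\alpha\ge 0$, so $\mu_\alpha\ge\langle\varsigma,\alpha\rangle$.
\end{itemize}
Summing the second group, $\sum_{\alpha\in\Phi^+\setminus\Phi^+_H}\alpha = 2\rho-2\rho_H$, where $2\rho_H$ is the sum of $\Phi^+_H$. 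This vector is $W_H$-invariant, because $W_H$ permutes $\Phi^+\setminus\Phi^+_H$. Since the weights $a_w$ sum to at most $1$ and $a_w\ge0$ (and in fact $\sum_{w\in W_H}a_w\ge 1-|W|\delta$),
\[
\langle \varsigma',2\rho-2\rho_H\rangle=\sum_{w\in W_H}a_w\langle \rho,2\rho-2\rho_H\rangle \ge (1-|W|\delta)\,\langle \rho,2\rho-2\rho_H\rangle .
\]
Adding $\sum_{\alpha\in\Phi^+_H}\langle\rho,\alpha\rangle=\langle\rho,2\rho_H\rangle$, the total $\sum_\alpha\mu_\alpha$ is at least $2\|\rho\|^2-o(1)$. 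The $U$-block contributes nonnegatively.

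\textbf{Removing the $d$ largest eigenvalues.} This is the main obstacle: removed eigenvalues may be huge, namely the large $U$-directions, the $\Phi^+_H$ roots, or $\mu_\alpha$ with small $\alpha(H)$ where $\coth$ blows up. So we should not bound removed eigenvalues individually.

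Instead, we bound the trace over $S$ directly. Let $P$ be the span of all "large" eigenvectors: the $U_\xi$ for $\xi\in\fa_H$, the $V_\alpha$ for $\alpha\in\Phi^+_H$, and any $V_\alpha$ with $\mu_\alpha > \max_\beta\langle\rho,\beta\rangle+1$. On the orthogonal complement of $P$, replace the Hessian by the diagonal form $Q$ with entries $\min(\mu_\alpha,\langle\varsigma,\alpha\rangle)$ on the $V_\alpha$ and $0$ on the $U$'s. The entries of $Q$ are bounded by $\max_\alpha\langle\rho,\alpha\rangle$, because $\varsigma$ lies in the convex hull of $W\rho$.

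For the large block $P$, we use that $\mathrm{Hess}\ge$ (diagonal truncation at level $\langle\rho,\alpha\rangle$, respectively $0$ on the $U$'s). This gives a comparison operator $T\le \mathrm{Hess}$ with all eigenvalues in $[0,\max_\alpha\langle\rho,\alpha\rangle]$ and trace at least $2\|\rho\|^2-o(1)$, by the summation above. Then
\[
\tr \mathrm{Hess}|_S\ \ge\ \tr T|_S\ \ge\ \tr T - d\max_{\alpha\in\Phi^+}\langle\rho,\alpha\rangle .
\]
Choosing $\varepsilon$ small enough that the $o(1)$ error is below $\eta$ finishes the proof.
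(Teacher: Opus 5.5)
Your Case A, the decomposition $\varsigma=\varsigma'+E$, and the estimate
\[
\sum_{\alpha\in\Phi^+_H}\langle\rho,\alpha\rangle+\sum_{\alpha\in\Phi^+\setminus\Phi^+_H}\langle\varsigma,\alpha\rangle\ \ge\ 2\|\rho\|^2-o(1)
\]
via $W_H$-invariance of $2\rho-2\rho_H$ all match the paper. So does the closing mechanism: a diagonal comparison operator $T\le\Hess$ with entries in $[0,\Lambda]$, where $\Lambda:=\max_{\beta\in\Phi^+}\langle\rho,\beta\rangle$, followed by $\tr T|_S\ge \tr T-d\Lambda$.

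The gap is in the operator $T$ you actually write down. On $\mathcal L:=\{\alpha\in\Phi^+\setminus\Phi^+_H:\ \mu_\alpha>\Lambda+1\}$ you use the entry $\langle\rho,\alpha\rangle$. So $\tr T$ equals the left-hand side above plus $\sum_{\alpha\in\mathcal L}\langle\rho-\varsigma,\alpha\rangle$, and ``by the summation above'' says nothing about this extra term. Its summands are of order one and of both signs. For example, in type $A_2$ (normalized by $\|\alpha\|^2=2$), at a point with $\alpha_1(H)=0$ and $\alpha_2(H)$ bounded below, $\varsigma\approx\rho-\tfrac12\alpha_1$. Then $\langle\varsigma,\alpha_2\rangle\approx\tfrac32>1=\langle\rho,\alpha_2\rangle$, while $\langle\varsigma,\alpha_1+\alpha_2\rangle\approx\tfrac32<2=\langle\rho,\alpha_1+\alpha_2\rangle$. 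Your summation works only because such terms cancel over complete $W_H$-orbits in $\Phi^+\setminus\Phi^+_H$. But $\mathcal L$ is cut out by a threshold on $\mu_\alpha=\coth(\alpha(H))\langle\varsigma,\alpha\rangle$, and you have not shown that this threshold respects those orbits.

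The repair is to not single out large $\mu_\alpha$ at all. For every $\alpha\notin\Phi^+_H$ keep the entry $\langle\varsigma,\alpha\rangle$. It satisfies $0\le\langle\varsigma,\alpha\rangle\le\mu_\alpha$ and $\langle\varsigma,\alpha\rangle\le\Lambda$ (since $\varsigma$ lies in the convex hull of $W\rho$), however large $\mu_\alpha$ is. Only the $U$-directions (entry $0$) and the roots in $\Phi^+_H$ (entry $\langle\rho,\alpha\rangle$) need truncating. Huge eigenvalues cost nothing once you compare with $T$, so the obstacle in your last paragraph never arises. With this change your argument is precisely the paper's: $L_\alpha=\langle\rho,\alpha\rangle$ on $\Phi^+_H$, $L_\alpha=\langle\varsigma,\alpha\rangle$ otherwise, $0\le L_\alpha\le\Lambda$, and $\sum_\alpha L_\alpha\ge 2\|\rho\|^2-4|W|\|\rho\|^2\varepsilon^{1/2}$.

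Your $T$ can in fact be salvaged, though it takes an argument you neither gave nor need. Using $\langle\varsigma,\beta\rangle\ge C_\varepsilon\beta(H)$ for $\beta\in\Delta_H$, one can show that for small $\varepsilon$, $\mathcal L$ meets each $W_H$-orbit in a set closed under raising by simple reflections $s_\beta$, $\beta\in\Delta_H$. Such a set has $W_H$-dominant sum, which pairs non-negatively with $\rho-w\rho$ for $w\in W_H$.
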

    \begin{proof}
    We shall use the vector fields ${\bf U}_H, H\in\frak a, \bf V_\alpha,\alpha\in \Phi^+$ defined in Section \ref{sec-Hess}. Recall that for each point $x\in X^{++}$ the vectors ${\bf V}_\alpha(x)$ for $\alpha\in\Phi^+$ and ${\bf U}_{H_i}(x)$ where $H_i$ is any orthonormal basis of $\frak a$, provide an orthonormal basis of $T_x X$. Let us write $U$ for the subspace spanned by ${\bf U}_H(x),H\in \frak a$, $V$ for the space spanned by ${\bf V}_\alpha(x)$, $U_x$ for the space spanned by ${\bf U}_H(x), H\in \frak a_x$ and finally $V_x$ for the space spanned by ${\bf V}_\alpha(x)$ where $\alpha$ is a non-negative combination of simple roots in $\Delta_x$. We have $\dim U_x=|\Delta_x|$.
    
    First, we consider the case $|\Delta_x|>d.$ In this case, $\dim S+\dim U_x>\dim X$ so $S$ must intersect $U_x$ non-trivially. Choose an $H\in \frak a_x, \|H\|=1$ such that ${\bf U}_H\in S.$ By Lemma \ref{lem-largehessiangen}, Lemma \ref{lem-gradientisinthepositiveweylchamber} and Proposition \ref{prop-CartanHess}
    $$ \tr \nabla^2F_\varepsilon(x)\mid_S\geq \nabla^2F_\varepsilon(x)(H,H)\geq 1000\|\rho\|^2\|H\|^2,$$ which concludes the proof of this case. 
    
    Suppose now that $|\Delta_x|\leq d.$ By Lemma \ref{lem-supprtWeyl}, any $w\in W$ such that $a_w(x)\geq \varepsilon^{1/2}$ satisfies $\supp w \subset \Delta_x$ so $w\in W_x$. 
    
   

    Write $\rho=\rho_x+\rho_x^\perp,$ where $\rho_x$ is the orthogonal projection of $\rho$ onto $\frak a_x.$ It is not hard to see that $\rho_x$ is actually the half sum of roots in $\Phi^+_x.$ Indeed, if $\beta \in \Phi^+\setminus \Phi^+_x$ then $w\beta\in \beta+\frak a_x^*$ for any $w\in W_x$. In particular, the whole $W_x$ orbit of $\beta$ stays inside $\Phi^+ - \Phi_x^+$, as the action by $W_x$ cannot change the sign of coefficients of simple roots in $\Delta\setminus\Delta_x.$ This means that   the projection of $\sum_{\beta\in \Phi^+\setminus \Phi^+_x}\beta$ onto $\frak a_x^*$ is $W_x$ invariant, hence must be zero.

    In place of the $L_{i,j}$ in the argument for $SL(n,\mathbb{R})$, we will work with quantities $L_{\alpha}, \alpha \in \Phi^+$, in order to get a lower bound on the trace of the Hessian.  We set $L_{\alpha}=\langle \rho,\alpha \rangle $ for $\alpha \in \Phi_x^+$, and $L_{\alpha}=\langle \varsigma ,\alpha \rangle $ otherwise.  
    
    We claim that each $L_{\alpha}$ is a lower bound for the corresponding diagonal entry $\nabla^2 F_{\epsilon}(V_\alpha, V_{\alpha})$  of the Hessian provided $\epsilon$ was chosen small enough. For $\alpha \in \Phi_x^+$, this follows from Lemma \ref{lem:1046} .  For $\alpha \in \Phi^+ - \Phi_x^+$ this follows from the fact that $\nabla^2F_{\epsilon} (V_{\alpha}, V_{\alpha}) \geq  \langle \varsigma , \alpha \rangle $.  

    Note also that we have the following estimate for the individual $L_{\alpha}$: 
    \begin{equation} \label{ineq-646}
    0 \leq L_{\alpha} \leq \max _{\alpha \in \Phi^+} \langle \rho, \alpha \rangle,
    \end{equation}
\noindent where the first inequality follows from Lemma \ref{lem-gradientisinthepositiveweylchamber}.  


We now estimate the sum of the $L_{\alpha}$, using for the second equality the fact observed above that $\rho_x$ is equal to the half sum of roots in $\Phi^+_x$: 
    \begin{equation} \label{ineq-1467}
    \begin{aligned}
    \sum_{\alpha \in \Phi^+} L_{\alpha} =& \sum_{\alpha \in \Phi_x^+} L_{\alpha} + \sum_{\alpha \in \Phi^+ -\Phi_x^+} \langle \varsigma, \alpha \rangle \\
    &= 2 || \rho_x||^2 + 2 \langle \varsigma, \rho_x^\perp \rangle \\
    &= 2 ||\rho_x||^2 + 2 \langle \rho, \rho_x^\perp \rangle + 2 \sum_{\omega \in W}  a_\omega \langle \omega \rho - \rho, \rho_x^\perp \rangle  \\
    &= 2 ||\rho||^2 + 2 \sum_{\omega \notin W_x} a_{\omega} \langle \omega \rho - \rho, \rho_x^\perp \rangle,
    \end{aligned} 
    \end{equation}
\noindent where in the last equality we use that for $\omega \in W_x$, 
\[
\langle \omega \rho, \rho_x^\perp \rangle = \langle  \rho,  \omega^{-1} \rho_x^\perp \rangle\ = \langle  \rho, \rho_x^\perp  \rangle. 
\]

\noindent This is the part of the argument analogous to the block sum averaging-out in Case 3 of the argument for $SL(n,\mathbb{R})$. By Lemma \ref{lem-supprtWeyl}, we have that $a_{\omega}(x) < \epsilon^{1/2}$ for $\omega \notin W_x$, and so since 
\[
| \langle \omega \rho - \rho, \rho_x^\perp \rangle| \leq 2 ||\rho||^2, 
\]
\noindent it follows from (\ref{ineq-1467}) that 
\[
\sum_{\alpha \in \Phi^+} L_{\alpha} \geq 2 ||\rho||^2 - 4 |W| ||\rho||^2 \epsilon^{1/2 }. 
\]
We choose $\epsilon$ small enough so that the last term is at most $\eta$.  Using (\ref{ineq-646}), we thus conclude the proof of the Proposition as in the argument for $SL(n,\mathbb{R})$.  

\end{proof}

    The norms of half-sums of positive roots across simple root systems can be computed via the Freudenthal–de Vries formula $\Vert{}\rho\Vert{}^2 = \frac{1}{12}h^\vee \dim(\mathfrak{g})$ \cite{freudenthal1969linear}. The explicit  values across all classical root types follow from the standard tables of Lie algebra invariants (see, e.g., \cite[Table 1]{kac1990infinite} or \cite[Ch. VI, Plates I–IX]{bourbaki2002lie}). The norm squared of the half sum of positive roots in a classical root system of rank at most $d$ is at most $d^3/3$ for $r>1$, as can be deduced from the table below. 
    \begin{table}[htpb]\label{tab-rholengths}
\centering
\renewcommand{\arraystretch}{1.8}
\begin{tabular}{l c c l}
\hline
{Root System} & {dual Coxeter} $h^\vee$ & $\dim(\mathfrak{g})$ & $\|\rho\|^2$ \\
\hline
$A_r$ ($r \ge 1$) & $r+1$ & $r(r+2)$ & $\frac{r(r+1)(r+2)}{12}$ \\
$B_r$ ($r \ge 2$) & $2r-1$ & $r(2r+1)$ & $\frac{r(4r^2 - 1)}{12}$ \\
$C_r$ ($r \ge 3$) & $r+1$ & $r(2r+1)$ & $\frac{r(r+1)(2r+1)}{12}$ \\
$D_r$ ($r \ge 4$) & $2r-2$ & $r(2r-1)$ & $\frac{r(r-1)(2r-1)}{6}$\\
\hline
\end{tabular}
\caption{Squared norm of the half-sum of positive roots under the normalization ($\|\alpha_{\text{long}}\|^2 = 2$).}
\label{tab:weyl_vector_norm}
\end{table}


	\section{Stationary Varifolds and matrix coefficients}\label{sec-VarifoldsMonoMC}
	Let $\Gamma$ be a discrete subgroup of $G$. In this section we will use the monotonicity estimates to control the decay of certain matrix coefficients of the right regular representation $L^2(\Gamma\bs G)$. The discussion in Section \ref{s:6pointone}  applies to all discrete subgroups of $G$ although the corollaries seem to be most interesting in the case of lattices. We use the tension between the monotonicity estimates and the decay of matrix coefficients to prove Theorem \ref{mt-stationary}.
	
	\subsection{Convolutions, unitary representations and spherical functions.} \label{s:6pointone}
	The discussion in this part applies to all semi-simple real Lie groups with no restrictions on the rank. Let us recall the definition of the convolution for functions in $C(G)$. For $\varphi_1,\varphi_2\in C_c(G)$ we put
	$$ \varphi_1\ast \varphi_2(h)=\int_G \varphi_2(g^{-1}h)\varphi_1(g)dg=\int_G \varphi_1(hg^{-1})\varphi_2(g)dg.$$
	The same definition applies when only one of the functions is in $C_c(G)$ and the other is only locally $L^1$-integrable or when both functions are $L^2$-integrable. In particular, if $f\in L^2(\Gamma\bs G)$, then $f\ast \varphi\in L^2(\Gamma\bs G)$ for any $\varphi\in C_c(G).$ Let us explain the interplay between convolutions and right regular unitary representations in more detail.
	Let $\lambda_{\Gamma\bs G}\colon G\to \mathscr U (L^2(\Gamma\bs G))$ be the right regular representation 
	$$\lambda_{\Gamma\bs G}(g)f(h):=f(hg), \quad f\in L^2(\Gamma\bs G).$$
	Note that $\lambda_{\Gamma\bs G}$ is a left action $\lambda_{\Gamma\bs G}(g_1)\lambda_{\Gamma\bs G}(g_2)f=\lambda_{\Gamma\bs G}(g_1g_2)f.$
	For any $\varphi\in L^1(G)$ we define 
	$$[\lambda_{\Gamma\bs G}(\varphi)f](h) := \int_{G}\varphi(g)(\lambda_{\Gamma\bs G}(g)f(h))dg, \quad f\in L^2(\Gamma\bs G), h\in 
	G.$$ The resulting function $\lambda_{\Gamma\bs G}(\varphi)f$ is left $\Gamma$-invariant and descends to a square integrable function on $\Gamma\bs G.$
	We have  
	\begin{equation}\lambda_{\Gamma\bs G}(\varphi)f(h)=\int_G\varphi(g)f(hg)dg=f\ast \check{\varphi}(h), \text{ where } \check{\varphi}(g):=\varphi(g^{-1}).\end{equation}\label{eq-convcheck}
	(Here $\varphi$ is always a real-valued function.) Convolutions play nicely with the operators defined just above
	\begin{equation}\label{eq-convrreg}\lambda_{\Gamma\bs G}(\varphi_1)\lambda_{\Gamma\bs G}(\varphi_2)f=\lambda_{\Gamma\bs G}(\varphi_1\ast \varphi_2)f, \quad \varphi_1,\varphi_2\in C_c(G).\end{equation}
	In particular $\varphi\mapsto \lambda_{\Gamma\bs G}(\varphi)$ is an algebra homomorphism from $C_c(G)$ with convolution to the algebra of bounded operators on $L^2(\Gamma\bs G).$ We also have 
	\begin{equation}\label{eq-convassoc}
		(f\ast \varphi_1)\ast \varphi_2= f\ast (\varphi_1\ast \varphi_2), \quad f\in L^2(\Gamma\bs G), \varphi_1,\varphi_2\in C_c(G). 
	\end{equation}
	\noindent and 
	\begin{equation}\label{eq-adjoint}
		\langle f_1\ast \varphi, f_2\rangle=\langle f_1, f_2\ast \check{\varphi}\rangle.
	\end{equation}
	Recall that $K$ was a fixed maximal compact subgroup of $G$. A function $f\colon G\to \mathbb C$ is called spherical if $f(k_1gk_2)=f(g)$ for all $k_1,k_2$. We write $L^p(G\sslash K), C(G\sslash K)$ for the spaces of $L^p$-integrable and continuous spherical function respectively. In the sequel we will heavily rely on the fact that convolution of spherical functions is commutative. 
	
	\begin{lem}
		Let $\varphi_1,\varphi_2\in L^1(G\sslash K)$ be spherical functions. Then $\varphi_1\ast \varphi_2=\varphi_2\ast \varphi_1.$
	\end{lem}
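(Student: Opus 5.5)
The plan is Gelfand's trick: find an anti-automorphism $\iota$ of $G$ that fixes every bi-$K$-invariant function. Then convolution is reversed by $\iota$ and fixed by sphericity, which forces commutativity. The natural choice is $\iota(g)=\Theta(g)^{-1}$, built from the Cartan involution $\Theta$ fixing $K$. It is an involutive anti-automorphism with $\iota(k)=k$ for $k\in K$. Since $\Theta$ acts on $A=\exp\mathfrak a$ by $\Theta(e^H)=e^{-H}$ (because $\mathfrak a\subseteq\mathfrak s$), we get $\iota(e^H)=e^{H}$.

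First I would check that every spherical $\varphi$ satisfies $\varphi\circ\iota=\varphi$. By the Cartan decomposition $G=KA^+K$, write $g=k_1e^Hk_2$. Then
$$\iota(g)=\iota(k_2)\iota(e^H)\iota(k_1)=k_2e^Hk_1,$$
so $\varphi(\iota(g))=\varphi(e^H)=\varphi(g)$ by bi-$K$-invariance.

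Second I would check that $\iota$ preserves Haar measure and that $(\varphi_1\ast\varphi_2)\circ\iota=(\varphi_2\circ\iota)\ast(\varphi_1\circ\iota)$. The map $\Theta$ is an automorphism of a unimodular group, and it has finite order. Its modulus is therefore a positive character of $\mathbb Z/2$, hence equal to $1$. Inversion preserves Haar measure because $G$ is semisimple and hence unimodular. For the convolution identity, substitute $g=\iota(g')$ in
$$\varphi_1\ast\varphi_2(\iota h)=\int_G\varphi_1(g)\varphi_2(g^{-1}\iota h)\,dg.$$
Since $\iota$ is an anti-homomorphism, $g^{-1}\iota(h)=\iota(h\,g'^{-1})$. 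The integrand becomes $\varphi_1(\iota g')\,\varphi_2(\iota(hg'^{-1}))$. By the second formula for convolution in \S\ref{s:6pointone}, this integral equals $(\varphi_2\circ\iota)\ast(\varphi_1\circ\iota)(h)$.

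Combining the two steps, and using that $\varphi_1\ast\varphi_2$ is itself spherical,
$$\varphi_1\ast\varphi_2=(\varphi_1\ast\varphi_2)\circ\iota=(\varphi_2\circ\iota)\ast(\varphi_1\circ\iota)=\varphi_2\ast\varphi_1.$$
The only step needing care is the $L^1$ setting. I would handle it in one of two ways. One option is to note that all manipulations are Fubini-valid for $L^1$ functions, so the identities hold almost everywhere. The other is to prove the result for $C_c(G\sslash K)$, which is dense in $L^1(G\sslash K)$ via averaging over $K\times K$, and then pass to the limit using continuity of convolution on $L^1$.
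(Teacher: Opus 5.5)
Your proof is correct and is Gelfand's trick with the anti-automorphism $g\mapsto\Theta(g)^{-1}$, which is the standard argument behind the Gelfand-pair result of Gangolli--Varadarajan \cite[Prop.~1.5.2]{gv88} that the paper simply cites. One small slip: since $\Theta$ fixes $K$, $\iota(k)=k^{-1}$ rather than $k$, so $\iota(k_1e^Hk_2)=k_2^{-1}e^Hk_1^{-1}$; this still lies in $Ke^HK$, so $\varphi\circ\iota=\varphi$ and the rest of your argument are unaffected.
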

	\begin{proof}
		It follows from \cite[Prop. 1.5.2]{gv88} combined with the definition \cite[Def. 1.5.1]{gv88}. 
	\end{proof}
	For any $k$-varifold $S\subset \Gamma\bs X$ write $\tilde S$ for the lift to $X$. 
	$$\tilde S=\Gamma S=\{x\in X\mid \Gamma x\in S\}.$$
	Since being stationary is a local property, $S$ is stationary if and only if $\tilde S$ is.
	\begin{defn} For $\varphi$ a compactly supported spherical function the formula 
		$$(\tilde S\ast \varphi)(h):=\int_{\tilde S} \varphi(g^{-1}h)d\mathcal H^k(gK)$$ for cocompact $\Gamma$ defines a left $\Gamma$-invariant, right $K$-invariant function on $G$ which descends to a right $K$-invariant function in $L^2(\Gamma\bs G)$, denoted $S\ast \varphi$.
	\end{defn}
	
	From now on let us fix a smooth compactly supported spherical test function $\varphi\in C_c^\infty(G)$ such that $\varphi(g)\geq 0, \int_G \varphi(g)dg=1$. The constants appearing in the results below will depend on our choice of $\varphi$ but not on $S$ or $\Gamma$. 
	We put \[\mathfrak u:=S\ast \varphi\in L^2(\Gamma\bs G)\text{ and }\Psi(g):=\langle \mathfrak u,\lambda_{\Gamma\bs G}(g^{-1})\mathfrak u\rangle=\langle \mathfrak u, \lambda_{\Gamma\bs G}(g)\mathfrak u\rangle,\]
	where the last equality follows from (\ref{eq-convcheck}) and (\ref{eq-adjoint}). Choose a Borel measurable fundamental domain $S_0\subset \tilde S$ of $S$, so that $\tilde S=\bigsqcup_{\gamma\in\Gamma}\gamma S_0.$
	
	\begin{lem}\label{lem-BallIntForm} 
    
    For any  smooth compactly supported spherical function $\Phi\colon G\to\mathbb R$
		$$\int_{G}\Phi(g) \Psi(g) dg= \int_{S_0}\int_G \left(\int_{\tilde S}\Phi(s^{-1}uh)d\mathcal H^k(sK)\right)(\varphi\ast\check{\varphi})(h)dh d\mathcal H^k(uK). $$
	\end{lem}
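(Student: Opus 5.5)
The identity is obtained by unfolding definitions and applying Fubini repeatedly. Everything is compactly supported or integrable because $\Gamma$ is cocompact and $\Phi,\varphi$ are compactly supported.

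\textbf{Step 1: unfold $\Psi$ over $\Gamma\bs G$.} Write
$$\Psi(g)=\int_{\Gamma\bs G}\mathfrak u(h)\,\mathfrak u(hg)\,dh .$$
Then unfold one copy of $\mathfrak u$ using $\tilde S=\bigsqcup_\gamma \gamma S_0$. For $h\in G$,
$$\mathfrak u(h)=\int_{\tilde S}\varphi(u^{-1}h)\,d\mathcal H^k(uK)=\sum_{\gamma\in\Gamma}\int_{S_0}\varphi(u^{-1}\gamma^{-1}h)\,d\mathcal H^k(uK).$$
The sum over $\Gamma$ combines with the integral over $\Gamma\bs G$ into an integral over $G$. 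Using left $\Gamma$-invariance of $h\mapsto \mathfrak u(hg)$, this gives
$$\Psi(g)=\int_{S_0}\int_G \varphi(u^{-1}h)\,\mathfrak u(hg)\,dh\,d\mathcal H^k(uK).$$

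\textbf{Step 2: shift variables.} Substitute $h\mapsto uh$, which is legitimate by left invariance of Haar measure:
$$\Psi(g)=\int_{S_0}\int_G\varphi(h)\,\mathfrak u(uhg)\,dh\,d\mathcal H^k(uK).$$
Now insert the definition $\mathfrak u(uhg)=\int_{\tilde S}\varphi(s^{-1}uhg)\,d\mathcal H^k(sK)$. Multiply by $\Phi(g)$ and integrate over $g\in G$, bringing the $g$-integral innermost. The inner expression becomes
$$\int_G\Phi(g)\,\varphi(s^{-1}uhg)\,dg=\int_G \Phi(h'^{-1}s^{-1}uh)\,\varphi(h')\,dh',$$
after the substitution $h'=s^{-1}uhg$. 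Unimodularity of $G$ (it is semisimple) is used here.

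\textbf{Step 3: reassemble the convolutions.} The remaining task is to rearrange the integrals over $h$ and $h'$ into $(\varphi\ast\check\varphi)$ evaluated at a single variable, with the varifold convolution acting on $\Phi$. Set $h''=h\,g$-type combinations, or equivalently use that $\varphi,\Phi$ are spherical and that spherical convolution is commutative (the preceding lemma), so the factors can be moved past one another. Concretely, I expect the computation to reach
$$\int_{S_0}\int_G\Big(\int_{\tilde S}(\Phi\ast\check\varphi\ast\varphi)(s^{-1}u\,\cdot)\Big)\ldots$$
Commutativity $\Phi\ast\check\varphi\ast\varphi=\Phi\ast(\varphi\ast\check\varphi)$ then lets us rewrite this as
$$\int_G\Phi(s^{-1}uh)\,(\varphi\ast\check\varphi)(h)\,dh,$$
up to replacing $h$ by $h^{-1}$. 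That replacement is harmless: $\varphi\ast\check\varphi$ is symmetric under inversion, and $\Phi$ composed with inversion equals $\Phi$ when $\Phi$ is bi-$K$-invariant, because $a(g^{-1})=-w_0a(g)$. Here I must be careful, since $\Phi(g^{-1})=\Phi(g)$ does not follow from bi-$K$-invariance alone. The intended route is to use commutativity of spherical convolutions rather than an inversion symmetry.

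\textbf{Main obstacle.} The delicate step is the bookkeeping of left versus right translations and inverses in Step 3, so that the variable $h$ lands exactly as $\Phi(s^{-1}uh)\,(\varphi\ast\check\varphi)(h)$. In practice this means choosing the right order of the substitutions $h'=s^{-1}uhg$, then $h=h''h'^{-1}$ or its inverse. The product $\varphi(h)\varphi(h')$ must integrate to $(\varphi\ast\check\varphi)$ evaluated at the combined variable, and when an order reversal appears, commutativity of the spherical functions $\varphi$, $\check\varphi$ and $\Phi$ is what fixes it. Justifying Fubini is routine: the integrand is nonnegative or absolutely integrable, since $S_0$ has finite mass and only finitely many $\gamma$ contribute on compact sets.
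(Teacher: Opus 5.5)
Your overall plan (unfold one copy of $\mathfrak u$ over $S_0$, apply Fubini, use commutativity of spherical convolution) is the paper's. However, the orientation bookkeeping has a genuine gap, and your proposed resolution of it is incorrect.

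The substitution in Step 2 is wrong. From $h'=s^{-1}uhg$ you get $g=h^{-1}u^{-1}s\,h'$, so
\[
\int_G\Phi(g)\varphi(s^{-1}uhg)\,dg=\int_G\Phi(h^{-1}u^{-1}s\,h')\varphi(h')\,dh'=(\varphi\ast\check\Phi)(s^{-1}uh).
\]
Your expression $\Phi(h'^{-1}s^{-1}uh)$ is $\Phi(g^{-1})$, and it would give $(\varphi\ast\Phi)(s^{-1}uh)$. With your incorrect formula, commutativity would indeed finish the proof; the error is precisely what hides the problem.

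Done correctly, your route, together with $\int_G F(xh)\varphi(h)\,dh=(F\ast\check\varphi)(x)$ and commutativity, yields
\[
\int_G\Phi\Psi\,dg=\int_{S_0}\int_{\tilde S}(\check\Phi\ast\varphi\ast\check\varphi)(s^{-1}u)\,d\mathcal H^k(sK)\,d\mathcal H^k(uK).
\]
The right side of the lemma is the same expression with $\Phi$ in place of $\check\Phi$, since $(\varphi\ast\check\varphi)^\vee=\varphi\ast\check\varphi$. Commutativity cannot convert $\check\Phi$ into $\Phi$. As you note, bi-$K$-invariance does not give $\check\Phi=\Phi$; this already fails for $\SL(3,\mathbb R)$, where $-w_0$ swaps the two simple roots. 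So the two integrands are in general different functions of $s^{-1}u$. Your claim that commutativity ``rather than an inversion symmetry'' settles Step 3 is therefore false for your starting formula $\Psi(g)=\int_{\Gamma\bs G}\mathfrak u(h)\mathfrak u(hg)\,dh$.

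The missing idea is an inversion symmetry of $\Psi$, not of $\Phi$. By unitarity and realness,
\[
\Psi(g)=\langle\mathfrak u,\lambda_{\Gamma\bs G}(g)\mathfrak u\rangle=\langle\lambda_{\Gamma\bs G}(g^{-1})\mathfrak u,\mathfrak u\rangle=\Psi(g^{-1}).
\]
You can use this in either of two ways:
\begin{itemize}
\item Run your computation for $\check\Phi$, which is again smooth, compactly supported and spherical, and use $\int_G\Phi\Psi=\int_G\check\Phi\Psi$.
\item As the paper does, start from $\Psi(g)=\int_{\Gamma\bs G}\mathfrak u(h)\mathfrak u(hg^{-1})\,dh$. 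Then $\int_G\Phi\Psi=\langle\mathfrak u,\mathfrak u\ast\Phi\rangle$ with $(\mathfrak u\ast\Phi)(h)=\int_{\tilde S}(\varphi\ast\Phi)(s^{-1}h)\,d\mathcal H^k(sK)$. Commutativity $\varphi\ast\Phi=\Phi\ast\varphi$ is now exactly what is needed. Unfolding the other copy of $\mathfrak u$, followed by $h\mapsto uh$ and $h\mapsto hg$, produces $\int_G\varphi(g)\varphi(hg)\,dg=(\varphi\ast\check\varphi)(h)$.
\end{itemize}
Alternatively, the unfolded double integral $\int_{S_0}\int_{\tilde S}F(s^{-1}u)$ is unchanged if $F(s^{-1}u)$ is replaced by $F(u^{-1}s)$; this follows by writing $\tilde S=\bigsqcup_\gamma\gamma S_0$ and reindexing.

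Your Fubini justification is fine.
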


	\begin{proof}
		By the bilinearity of the inner product we have that the left side equals  
		$$\int_{G}\Phi(g)\langle S\ast \varphi, \lambda_{\Gamma\bs G}(g^{-1})(S\ast \varphi)\rangle dg=\langle S\ast \varphi, \lambda_{\Gamma\bs G}(\check{\Phi})(S\ast \varphi) \rangle=\langle S\ast \varphi, (S\ast \varphi)\ast \Phi\rangle.$$
		Let us compute the convolution on the right side of the product. Let $h\in \Gamma\bs G.$
		\begin{align*}
			(S\ast \varphi)\ast \Phi (h)=&\int_G \int_{\tilde S} \varphi(s^{-1}hg^{-1})d\mathcal{H}^k(sK) \Phi(g)dg\\
			=& \int_G\int_{\tilde S} \varphi(s^{-1}hg^{-1})\Phi(g)d\mathcal H^k(sK) dg\\
			=&\int_{\tilde S}(\varphi\ast \Phi)(s^{-1}h)d\mathcal H^k(sK)\\
			=&\int_{\tilde S}(\Phi\ast \varphi)(s^{-1}h)d\mathcal H^k(sK)\\
			=&\int_G\int_{\tilde S} \Phi(s^{-1}hg^{-1})\varphi(g)d\mathcal H^k(sK)dg\\
		\end{align*}
		The second to last equality above is where we use the assumption that $\Phi$ is spherical, hence $\varphi\ast \Phi=\Phi\ast \varphi$. We can now evaluate the inner product in question
		\begin{align*}
			\langle S\ast \varphi, S\ast \varphi\ast\Phi \rangle=& \int_{\Gamma\bs G}(S\ast \varphi)(h)\int_G\int_{\tilde S} \Phi(s^{-1}hg^{-1})\varphi(g)d\mathcal H^k(sK)dgdh\\
			=& \int_{\Gamma\bs G}\int_{\tilde S}\varphi(u^{-1}h)d\mathcal H^k(uK)\int_G\int_{\tilde S} \Phi(s^{-1}hg^{-1})\varphi(g)d\mathcal H^k(sK)dg dh\\
			=& \int_{\Gamma\bs G}\left(\sum_{\gamma\in\Gamma}\int_{S_0}\varphi(u^{-1}\gamma h)d\mathcal H^k(uK)\right)\\ &\left(\int_G\int_{\tilde S}\Phi(s^{-1}hg^{-1})\varphi(g)d\mathcal H^k(sK)dg\right) dh\\
			=& \int_{\Gamma\bs G}\left(\sum_{\gamma\in \Gamma}\int_{S_0}\int_G\int_{\tilde S}\Phi(s^{-1}\gamma hg^{-1})\varphi(g)\varphi(u^{-1}\gamma h) d\mathcal H^k(sK)dg d\mathcal H^k(uK)\right) dh\\
			=& \int_{S_0}\int_G\int_G\int_{\tilde S}\Phi(s^{-1}hg^{-1})\varphi(g)\varphi(u^{-1}h)d\mathcal H^k(sK)dhdg d\mathcal H^k(uK)\\
			=& \int_{S_0}\int_G\int_G\int_{\tilde S}\Phi(s^{-1}uh)\varphi(g)\varphi(hg)d\mathcal H^k(sK)dhdg d\mathcal H^k(uK)\\
			=& \int_{S_0}\int_G \left(\int_{\tilde S}\Phi(s^{-1}uh)(\varphi\ast \check{\varphi})(h)d\mathcal H^k(sK)\right)dh d\mathcal H^k(uK).
		\end{align*}
		
		To go from the third-last to the second-last line, we do change of variables on $h$ first by left multiplication by $u$ and then by right multiplication by $g$.   
		
	\end{proof}
	\subsection{Monotonicity and the decay of matrix coefficients}\label{sec-stationaryproof}
	In this section we restrict to the rank one case. Let $\ff$ be the function defined in (\ref{defn-ffdef}). Put
	\begin{align*}
		v_S(g,r):=& \int_{\tilde S} \chi_r(\ff(s^{-1}g))d\mathcal H^k(sK)\quad \text{ (same as before)},\\
		w_S(g,r):=& \int_G v_S(gh,r)(\varphi\ast \check\varphi)(h)dh= \int_G \left(\int_{\tilde S}\chi_r(\ff(s^{-1}gh))(\varphi\ast \check{\varphi})(h)d\mathcal H^k(sK)\right)dh.
	\end{align*}
	
	Applying Lemma \ref{lem-BallIntForm} to $\Phi(g)=\chi_r(\ff(g))$, we get the identity 
	\begin{equation}\label{eq-PsiIntegralformula}\int_G \chi_r(\ff(g))\Psi(g)dg=\int_{S_0}\int_G v_S(sh,r)(\varphi\ast\check{\varphi})(h)dh d\mathcal H^k(sK)=\int_{S_0}w_S(s,r)d\mathcal H^k(sK).
	\end{equation}
	Recall $\mathfrak u:=S\ast \varphi\in L^2(\Gamma\bs G)$. Put $\mathfrak u_0:=\mathfrak u-\frac{\mathcal H^k(S)}{\vol(\Gamma\bs G)}$. Then, since $\int_G\varphi(g)dg=1$, we have $\int_{\Gamma\bs G}\mathfrak u_0(g)dg=0.$ Put $\mathfrak u_1:=\frac{\mathcal H^k(S)}{\vol(\Gamma\bs G)}.$ Let 
	$$\Psi_i(g):=\langle \mathfrak u_i,\lambda_{\Gamma\bs G}(g^{-1})\mathfrak u_i\rangle, i=0,1.$$
	Then $\Psi(g)=\langle \mathfrak u,\lambda_{\Gamma\bs G}(g^{-1})\mathfrak u\rangle= \Psi_0(g)+\Psi_1(g)$. It is not hard to compute $\Psi_1(g)=\frac{\mathcal H^k(S)^2}{\vol(\Gamma\bs G)},$ so 
	\begin{equation}\label{eq-PsiDecomp}
		\Psi(g)=\Psi_0(g) + \frac{\mathcal H^k(S)^2}{\vol(\Gamma\bs G)}.
	\end{equation}
	The function $\Psi$ plays a key role in the proof of Theorem \ref{mt-stationary}. 
	
	\begin{lem}\label{lem-matrixmono} Suppose $S\subset \Gamma\bs X$ is a nonzero stationary $k$-varifold. Then
		\begin{enumerate}
			\item $\Psi(1)\geq c \mathcal H^k(S)$,
			\item $\int_G \chi_0(\ff(g))\Psi(g)dg\geq c \Psi(1)$,
			\item  for any $r\geq 0$  $$\int_{G}\chi_r(\ff(g))\Psi(g)dg\geq c e^{\kappa(k)\|\alpha\|r}\Psi(1).$$
		\end{enumerate}
	\end{lem}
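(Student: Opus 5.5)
The plan is to reduce all three items to the identity (\ref{eq-PsiIntegralformula}) together with Lemma~\ref{lem-VarifoldMono}. We rely throughout on $\varphi\ge 0$: it gives $\varphi\ast\check\varphi\ge 0$ with integral $1$, and since $\varphi$ is compactly supported, $\varphi\ast\check\varphi$ is supported in a fixed compact set $B\subset G$. All constants will depend only on $G$, $\varphi$ and $\chi$.

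For (1), I will compute directly $\Psi(1)=\|\mathfrak u\|^2=\int_{\Gamma\bs G}(S\ast\varphi)^2$. We expand the square as in the proof of Lemma~\ref{lem-BallIntForm}, i.e.\ formally taking $\Phi=\delta_1$, or equivalently write $\Psi(1)=\int_{S_0}\int_{\tilde S}(\varphi\ast\check\varphi)(s^{-1}u)\,d\mathcal H^k(sK)\,d\mathcal H^k(uK)$. For a regular point $u$ of $S_0$, the inner integral is the $\varphi\ast\check\varphi$-weighted mass of $\tilde S$ near $u$. To make this concrete, choose $\varphi$ so that $\varphi\ast\check\varphi\ge c_0$ on a small ball $B(1,r_0)$; this is possible if $\varphi$ is positive near the identity. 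Then Lemma~\ref{lem-smallballmono} bounds the inner integral below by $c_0\delta(r_0)$. Integrating over $S_0$ gives $\Psi(1)\ge c\,\mathcal H^k(S)$.

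For (2) and (3), I will start from (\ref{eq-PsiIntegralformula}), which gives $\int_G\chi_r(\ff(g))\Psi(g)\,dg=\int_{S_0}w_S(s,r)\,d\mathcal H^k$, and compare $w_S(s,r)$ with $v_S(s,r')$. Since $\ff$ is $1$-Lipschitz (its gradient has norm at most $1$ after normalization) and $h$ ranges over the compact set $B$, we have $\ff(s'^{-1}sh)\le \ff(s'^{-1}s)+D$ for $D=\sup_{h\in B}d(hK,K)$. By monotonicity of $\chi$ this yields $\chi_r(\ff(s'^{-1}sh))\ge\chi_{r-D}(\ff(s'^{-1}s))$, and hence $w_S(s,r)\ge v_S(s,r-D)$. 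In the other direction, $w_S(s,r)\le v_S(s,r+D)$.

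Lemma~\ref{lem-VarifoldMono}(1) then gives $v_S(s,r-D)\ge e^{\kappa(k)\|\alpha\|(r-D-s_0)}v_S(s,s_0)$ for a fixed small $s_0$, and Lemma~\ref{lem-VarifoldMono}(2) gives $v_S(s,s_0)\ge v_S(s,0)\ge\delta$ for almost every $s$. Integrating over $S_0$ produces the bound $\int_G\chi_r(\ff)\Psi\ge c\,e^{\kappa(k)\|\alpha\| r}\mathcal H^k(S)$, valid for $r\ge D$; for $0\le r<D$ we use $\chi_r\ge\chi_0$ and handle the bound as in the $r=0$ case. To obtain the bound in terms of $\Psi(1)$ rather than $\mathcal H^k(S)$, I need an upper bound $\Psi(1)\le C\,\mathcal H^k(S)$. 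This follows from the same expression as in (1): bound $\varphi\ast\check\varphi$ by a constant times $\chi_{R}(\ff)$, then the inner integral is at most $C\,v_S(u,R)$. The remaining issue is that $v_S(u,R)$ must be bounded \emph{above}.

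\textbf{Main obstacle.} This upper bound on local mass of $S$ in fixed balls does not hold for general stationary varifolds. So I would reorganize to avoid it. The observation is that $\Psi(1)$ and the left-hand side of (2)/(3) are both integrals over $S_0$ of local quantities: $\Psi(1)=\int_{S_0}m(u)$ with $m(u)\le C v_S(u,D)$, while the left side is $\int_{S_0}w_S(u,r)$ with $w_S(u,r)\ge v_S(u,r-D)$. For (3), I would choose the shift so that $v_S(u,r-D)\ge e^{\kappa\|\alpha\|(r-2D-1)}v_S(u,D)$, applying Lemma~\ref{lem-VarifoldMono}(1) pointwise with $s=D$. This compares the two integrands pointwise without any absolute upper bound, giving $\int\chi_r(\ff)\Psi\ge c\,e^{\kappa\|\alpha\|r}\Psi(1)$ for $r\ge 2D+1$. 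For smaller $r$, including (2), I would enlarge the support of $\chi$ by working with $\chi_0$ scaled, or simply choose $\varphi$ with support small relative to the unit scale of $\chi_0$ (recall $\ff(e)<1$), so that $D$ is tiny and the same pointwise comparison applies.
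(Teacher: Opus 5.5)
Your item (1) is the paper's argument. Your pointwise scheme for (3) at large radius is sound for every $\varphi$: $w_S(u,r)\ge v_S(u,r-D)\ge e^{\kappa(k)\|\alpha\|(r-2D)}v_S(u,D)$ and $m(u)\le C\,v_S(u,D)$, integrated over $S_0$. You are also right that $\Psi(1)\le C\mathcal H^k(S)$ is false because of multiplicity, so your route does need the reorganization.

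\textbf{The gap is item (2)}, and with it the range $r\le 2D$ of (3). There your scheme would need $v_S(u,-D)\gtrsim v_S(u,D)$, which is a reverse inequality that monotonicity does not give. Worse, since $\ff\ge\ff(e)$, once $D\ge 1-\ff(e)$ one has $\chi(\ff+D)\equiv 0$. Then $v_S(u,-D)=0$ and your lower bound for $w_S(u,0)$ is empty.

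Shrinking $\supp\varphi$ so that $\ff(e)+2D<1$ does rescue it, but by a direct comparison, not ``the same'' monotonicity step. With that choice, $\chi_0(\ff(s^{-1}uh))\ge\chi(\ff(e)+2D)>0$ whenever $d(sK,uK)\le D$ and $d(hK,K)\le D$. Hence $w_S(u,0)\ge c\,\mathcal H^k(\tilde S\cap B(uK,D))\ge c'm(u)$. However, this proves the lemma only for specially chosen test functions. The statement concerns the arbitrary $\varphi$ fixed before it, and rescaling $\chi_0$ would change the statement itself.

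\textbf{The missing idea} is that (2) holds at the level of convolution operators. Since $\varphi$ is compactly supported and $\chi_0(\ff)$ is continuous, nonnegative and positive near $1$, there is $c>0$ with $\varphi\ast\chi_0(\ff)\ge c\,\varphi$ pointwise. Because $\tilde S$ is a positive measure, $S\ast\varphi\ast\chi_0(\ff)\ge c\,S\ast\varphi\ge 0$. Pairing with $S\ast\varphi$ gives $\int_G\chi_0(\ff)\Psi\ge c\,\Psi(1)$ for any $\varphi$.

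For (3), the paper also avoids your Lipschitz shift. Lemma~\ref{lem-VarifoldMono}(1) holds around every center $gK\in X$, not only at points of $S$. So $v_S(uh,r)\ge e^{\kappa(k)\|\alpha\|r}v_S(uh,0)$ for each $h$, and hence $w_S(u,r)\ge e^{\kappa(k)\|\alpha\|r}w_S(u,0)$. Integrating over $S_0$ and applying (\ref{eq-PsiIntegralformula}) at $r=0$ reduces (3) to (2) for all $r\ge 0$, with no loss of a factor $e^{-2D\kappa(k)\|\alpha\|}$.

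Your version has one merit: (3) for $r>2D$ does not depend on (2) at all. Together with (1), that is all the proof of Theorem~\ref{mt-stationary} actually uses. So your proposal becomes complete in either of two ways:
\begin{enumerate}
\item fix $\varphi$ with small support from the outset, which is harmless for the application; or
\item insert the convolution inequality above, then cover small $r$ using $\chi_r\ge\chi_0$ and $\Psi\ge 0$ (which holds since $\mathfrak u\ge 0$).
\end{enumerate}
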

	\begin{proof}
		\begin{enumerate}
			\item $$\Psi(1)=\langle S\ast\varphi,S\ast\varphi\rangle=\int_{S_0}\int_{\tilde S}(\varphi\ast\check\varphi)(s^{-1}u)d\mathcal H^k(uK)d\mathcal H^k(sK),$$
			Let $\varepsilon>0$ be such that $\varphi\ast\check\varphi(g)\geq \varepsilon$ for all $g\in B_G(\varepsilon)$. By Lemma \ref{lem-smallballmono}, $\mathcal H^k(s^{-1} \tilde S\cap B(\varepsilon))>\delta=\delta(\varepsilon)>0$ for $\mathcal H^k$-almost all $s\in \tilde S$. Therefore $\Psi(1)\geq \varepsilon \delta \mathcal H^k(S).$ The constants $\varepsilon, \delta$ are independent of $S$ and $\Gamma$. 
			\item 
			Since $\varphi, \chi(\ff)$ are both non-negative continuous and positive at $1$, there is a constant $c>0$ such that $\varphi\ast \chi_0(\ff)\geq c \varphi.$
			We have 
			$$\int_G \chi_0(\ff(g))\Psi(g)dg=\langle S\ast \varphi, S\ast\varphi\ast \chi_0(\ff)\rangle\geq c\langle S\ast \varphi, S\ast\varphi\rangle=c \Psi(1).$$
			\item By (\ref{eq-PsiIntegralformula}), Lemma \ref{lem-VarifoldMono} and point (2) proved just above
			\begin{align*}\int_{G}\chi_r(\ff(g))\Psi(g)dg=&\int_{S_0}\int_G v_S(uhK,r)(\varphi\ast\check\varphi)(h)dh d\mathcal H^k(uK) \\
				\geq& e^{\kappa(k)\|\alpha\|r}\int_{S_0}\int_G v_S(uhK,0)(\varphi\ast\check\varphi)(h)dh d\mathcal H^k(uK)\\
				=& e^{\kappa(k)\|\alpha\|r}\int_G \chi_0(\ff(g))\Psi(g)dg\geq c e^{\kappa(k)\|\alpha\|r}\Psi(1).
			\end{align*}
		\end{enumerate}
	\end{proof}
	We have now enough tools to prove Theorem \ref{mt-stationary}. We reproduce the statement below. 
	\begin{reptheorem}{mt-stationary}
		Let $M$ be a compact octonionic hyperbolic $16$-manifold. There is a constant $c>0$ such that any nonempty stationary integral $k$-varifold $S\subset M$ satisfies $\mathcal{H}^{k}(S)\geq c \vol(M)$ for $k=14,15$.  
	\end{reptheorem}
	\begin{proof}[Proof of Theorem \ref{mt-stationary}]
		Any compact octonionic manifold is of the form $M=\Gamma\bs X$ where $\Gamma$ is a lattice in $F_4^{(-20)}$ and $X=\mathbb H_\mathbb O^2$. In that case $\vol(M)=\vol(\Gamma\bs G).$
		By Lemma \ref{lem-matrixmono}, for any $r\geq 0$
		$$\int_G\chi_r(\ff(g))\Psi(g)dg\geq c e^{\kappa(k)\|\alpha\|r}\Psi(1).$$
		On the other hand 
		\begin{align*}
			\int_G\chi_r(\ff(g))\Psi(g)dg=&\int_G\chi_r(\ff(g))\Psi_1(g)dg+ \int_G\chi_r(\ff(g))\Psi_0(g)dg\\
			=&\frac{\mathcal H^k(S)^2}{\vol(\Gamma\bs G)}\int_G\chi_r(\ff(g))dg+ \int_G\chi_r(\ff(g))\Psi_0(g)dg.
		\end{align*}
		Therefore 
		\begin{align}\label{eq-VolumeMonoDecay}
			\frac{\mathcal H^k(S)^2}{\vol(\Gamma\bs G)}\int_G\chi_r(\ff(g))dg\geq c e^{\kappa(k)\|\alpha\|r}\Psi(1) - \int_G\chi_r(\ff(g))\Psi_0(g)dg.
		\end{align}
		After unfolding the definition of $\ff$ and $\chi_r$, and integrating in spherical coordinates (Equation \ref{eq-sphericalcoord}) and changing variable $t:=\alpha(H)$ we get:
		\begin{align*}\int_G\chi_r(\ff(g))dg \leq& c \int_0^{(r+1)\|\alpha\|} \sinh(t)^{m_\alpha}\sinh(2t)^{m_{2\alpha}}dt\leq c e^{r(m_\alpha+2m_{2\alpha})\|\alpha\|}.
		\end{align*}
		The function $\Psi_0$ is a spherical matrix coefficient of the unitary representation $$L^2_0(\Gamma\bs G):=\left\{f\in L^2(\Gamma\bs G)\mid \int_{\Gamma\bs G}fdg=0\right\}.$$ The latter has no fixed vectors, so after decomposing this representation into irreducible unitary representations Lemma \ref{lem-rankonedecay} yields 
		$$|\Psi_0(k_1e^Hk_2)|\leq c e^{(1-m_{2\alpha})\alpha(H)}(1+\|H\|)\Psi_0(1)\leq  c e^{(1-m_{2\alpha})\alpha(H)}(1+\|H\|)\Psi(1).$$
		Using spherical integration once again, we get
		\begin{align*}\left|\int_G\chi_r(\ff(g))\Psi_0(g) dg \right|
			\leq& c \Psi(1)\int_0^{(r+1)\|\alpha\|} \sinh(t)^{m_\alpha}\sinh(2t)^{m_{2\alpha}}e^{(1-m_{2\alpha})t}(1+t)dt\\
			\leq& c \Psi(1) e^{r(m_\alpha+m_{2\alpha}+1)\|\alpha\|}(1+r).
		\end{align*}
		Plugging the previous inequalities back to (\ref{eq-VolumeMonoDecay}) we get 
		\begin{align*} \frac{\mathcal H^k(S)^2}{\vol(\Gamma\bs G)}e^{(m_\alpha+2m_{2\alpha})\|\alpha\|r}\geq& \Psi(1)(c_1 e^{\kappa(k)\|\alpha\|r}-c_2 e^{(m_\alpha+m_{2\alpha}+1)\|\alpha\|r}(r+1))\\
			\frac{\mathcal H^k(S)^2}{\vol(\Gamma\bs G)}\geq& \Psi(1)(c_1 e^{(\kappa(k)-m_\alpha-2m_{2\alpha})\|\alpha\|r}-c_2 e^{(1-m_{2\alpha})\|\alpha\|r}(r+1))\\
		\end{align*}
		Put $A(r):=(c_1 e^{(\kappa(k)-m_\alpha-2m_{2\alpha})\|\alpha\|r}-c_2 e^{(1-m_{2\alpha})\|\alpha\|r}(r+1))$. Our conditions on $k$ (i.e. $k=14,15$) are chosen exactly to guarantee that $\kappa(k)-m_\alpha-2m_{2\alpha}> 1-m_{2\alpha}$ (i.e. $\kappa(k)> 1+m_\alpha+m_{2\alpha}=\dim X$). We choose big enough $r$ so that $A(r)>0$. Then, by Lemma \ref{lem-matrixmono},
		\begin{align*}\frac{\mathcal H^k(S)^2}{\vol(\Gamma\bs G)}\geq & \Psi(1)A(r)\geq c \mathcal H^k(S) A(r)\\
			\mathcal H^k(S)\geq&  c A(r)\vol(\Gamma\bs G).\\
		\end{align*}
		Theorem \ref{mt-stationary} is proved.
	\end{proof}

\subsection{Proof of Theorem \ref{mt-stationary}: Higher Rank Case} 

An argument similar to the previous argument for octonionic hyperbolic manifolds will work in the $SL(n,\mathbb{R})$-case, but with the higher rank monotonicity formula (Lemma \ref{lem-VarifoldMonohigherrank}) replacing the rank one monotonicity formula, and the theorem by Oh on decay of matrix coefficients replacing Kostant's theorem.


  We lift the function $F_{\epsilon}$ from Section \ref{sec-MonoHigherRank} to a $K$-invariant function on $G$, and by abuse of notation also denote it by $F_{\epsilon}$. 
  

\vspace{2mm}
    
		Any compact Riemannian manifold with universal cover the symmetric space for $\SL(n,\mathbb{R})$ is of the form $M=\Gamma\bs X$ where $\Gamma$ is a lattice in  the isometry group of $X$ and $X=SL(n,\mathbb{R})/SO(n,\mathbb{R})$. We will assume for simplicity in what follows that $\Gamma$ is contained in $PSL(n,\mathbb{R})$, the identity component of the isometry group of $X$. The proof in the general case has a similar proof.  Note that $\vol(M)/\vol(\Gamma\bs G)$ is equal to a universal constant.  

		Lemma \ref{lem-matrixmono} also holds verbatim for $X$ the $\SL(n,\mathbb{R})$ symmetric space and with $\kappa(k)||\alpha||$ replaced by $\frac{2\langle \rho, \lambda \rangle - d (n-1-\frac{t}{2}) - \eta}{|\lambda|}$ for $d$ the codimension of the stationary varifold $S$, where the exponent comes from Lemma \ref{lem-VarifoldMonohigherrank} instead of Lemma \ref{lem-VarifoldMono}. We therefore have that for any $r\geq 0$, $\eta>0$, and $\epsilon=\epsilon(r,\eta)$ small enough
		$$\int_G\chi_r(F_{\epsilon}(g)/||\lambda||)\Psi(g)dg\geq c e^{\left(\frac{2\langle \rho, \lambda \rangle - d (n-1-\frac{t}{2}) - \eta}{|\lambda|}\right)  r}\Psi(1).$$

   \noindent      On the other hand 

\[
\int_G\chi_r(F_{\epsilon}(g)/||\lambda||)\Psi(g)dg =\frac{\mathcal H^k(S)^2}{\vol(\Gamma\bs G)}\int_G\chi_r(F_{\epsilon}(g)/||\lambda||)dg + \int_G\chi_r(F_{\epsilon}(g)/||\lambda||)\Psi_0(g)dg, 
\]
        
\noindent where as above $\Psi_0$ is the mean-zero part of $\Psi$. Therefore: 
		\begin{align}\label{eq-VolumeMonoDecayslnr}
			\frac{\mathcal H^k(S)^2}{\vol(\Gamma\bs G)}\int_G\chi_r(F_{\epsilon}(g)/||\lambda||)dg\geq c e^{\frac{2\langle \lambda, \rho \rangle - d (n-1-\frac{t}{2}) - \eta}{|\lambda|}  r}\Psi(1) - \int_G\chi_r(F_{\epsilon}(g)/||\lambda||)\Psi_0(g)dg.
		\end{align}

Let
\[
\alpha_i=e_i-e_{i+1},\qquad 1\leq i\leq n-1,
\]
be the simple roots. Recall that
\[
2\rho=\sum_{1\leq p<q\leq n}(e_p-e_q)
\]
and that, for the maximal strongly orthogonal system used in Theorem \ref{thm-decayslnr},
\[
2\theta=\sum_{j=1}^{\lfloor n/2\rfloor}(e_j-e_{n+1-j}).
\]
Since the positive root $e_p-e_q$ contains $\alpha_i$ in its simple root expansion exactly when $p\leq i<q$, while $e_j-e_{n+1-j}$ contains $\alpha_i$ exactly when $j\leq \min\{i,n-i\}$, we have
\begin{equation}\label{eq:rho-theta-simple-root-expansions}
2\rho=\sum_{i=1}^{n-1}i(n-i)\alpha_i,
\qquad
2\theta=\sum_{i=1}^{n-1}\min\{i,n-i\}\alpha_i.
\end{equation}
For every $1\leq i\leq n-1$,
\[
i(n-i)\geq \lceil n/2 \rceil\min\{i,n-i\}.
\]


\noindent We therefore have that 

 \begin{equation}\label{eq:theta-rho-over-n}
 \theta(H)\leq \frac{1}{\lceil n/2 \rceil }\rho(H)
 \qquad\text{for every }H\in\mathfrak a^+.
 \end{equation}

\noindent Therefore 
\begin{equation} \label{ineq:lambdarho}
\lambda(H) \geq \left( 1- \frac{t}{2\lceil n/2 \rceil} \right) \rho (H)  \hspace{3mm} \implies \hspace{3mm} 2 \rho (H) \leq \left( \frac{ 4\lceil n/2 \rceil } {2 \lceil n/2 \rceil - t} \right) \lambda(H)
\end{equation}

We will also use the next estimate for integration over the positive Weyl chamber, which follows by integrating in polar coordinates. 
For every $a>0$ and every integer $b\geq 0$ there is a constant $C_{n,a,b}$ such that
\begin{equation}\label{eq:cone-integral-slnr}
\int_{\substack{H\in\mathfrak a^+\\ \lambda(H)\leq T}}
e^{a\lambda(H)}(1+\|H\|)^b\,dH
\leq
C_{n,a,b}(1+T)^{n-2+b}e^{aT}.
\end{equation}
This is because the level sets of $\lambda$ scale with degree $n-2$, since $\dim\mathfrak a=n-1$, and $\|H\|$ is bounded on $\mathfrak a^+$ by a constant multiple, depending on $n$ and $t$, of $\lambda(H)$.

For $H\in\mathfrak a^+$, using that the identity element of the Weyl group occurs in the summation in the formula for $f_{\epsilon}$, we get
\begin{equation}\label{eq:fepsilon-dominates-rho}
f_{\epsilon}(H)
=\epsilon\log\left(\sum_{\omega\in W}e^{\langle\omega\lambda,H\rangle/\epsilon}\right)
\geq \lambda(H).
\end{equation}
It follows that
\[
\chi_r(f_{\epsilon}(H)/\|\lambda\|)\neq 0
\quad\Longrightarrow\quad
\lambda(H)\leq (r+1)\|\lambda\|.
\]
Moreover, the Cartan Jacobian satisfies
\[
\prod_{\alpha\in\Phi^+}\sinh(\alpha(H))^{m_\alpha}
\leq c e^{2\rho(H)}.
\]
Using Cartan integration, \eqref{eq:cone-integral-slnr}, and \eqref{ineq:lambdarho} we therefore obtain, for $r\geq 1$,
\begin{equation}\label{firstdelta}
\begin{aligned}
\int_G\chi_r(F_{\epsilon}(g)/\|\lambda\|)\,dg
&\leq
c\int_{\mathfrak a^+}\chi_r(f_{\epsilon}(H)/\|\lambda\|)
\left(\prod_{\alpha\in\Phi^+}\sinh(\alpha(H))^{m_\alpha}\right)dH\\
&\leq   c\int_{\mathfrak a^+}\chi_r(f_{\epsilon}(H)/\|\lambda\|)
e^{2\rho(H)} dH  \leq  c\int_{\mathfrak a^+}\chi_r(f_{\epsilon}(H)/\|\lambda\|)
e^{\left( \frac{ 4\lceil n/2 \rceil } {2 \lceil n/2 \rceil - t} \lambda(H) \right) } dH\\
&\leq c(1+r)^{n-2}e^{\left(\frac{ 4\lceil n/2 \rceil } {2 \lceil n/2 \rceil - t} \right) |\lambda| r}.
\end{aligned}
\end{equation}

\noindent The function $\Psi_0$ is a spherical matrix coefficient of the unitary representation
\[
L^2_0(\Gamma\bs G)
:=
\left\{f\in L^2(\Gamma\bs G):\int_{\Gamma\bs G}f\,dg=0\right\}.
\]
The latter has no fixed vectors, so after decomposing this representation into irreducible unitary representations, taking $\delta=1-t$ in Theorem \ref{thm-decayslnr} gives
\[
|\Psi_0(k_1e^Hk_2)|
\leq
c_\delta e^{-t\theta(H)}(1+\|H\|)\Psi(1).
\]
Using Cartan integration, \eqref{eq:theta-rho-over-n}, the estimate for the Jacobian, and recalling that we set $1-\delta=t$ we get
\begin{align}
\left|\int_G\chi_r(F_{\epsilon}(g)/\|\lambda\|)\Psi_0(g)\,dg\right|
&\leq
c_\delta\Psi(1)
\int_{\mathfrak a^+}
\chi_r(f_{\epsilon}(H)/\|\lambda\|)\notag\\
&\qquad\times
e^{2\rho(H)-t\theta(H)}(1+\|H\|)\,dH\notag\\
&\leq
c_\delta\Psi(1)
\int_{\substack{H\in\mathfrak a^+\\ \lambda(H)\leq(r+1)\|\lambda\|}}
e^{2\lambda(H)}(1+\|H\|)\,dH\notag\\
&\leq
c_\delta\Psi(1)(1+r)^{n-1} e^{2|\lambda|r}.
\label{ineq:meanzero}
\end{align}


\noindent Substituting the previous inequality into (\ref{eq-VolumeMonoDecayslnr}) we get 

\begin{equation} \label{ineq:1886}
\frac{\mathcal H^k(S)^2}{\vol(\Gamma\bs G)}\int_G\chi_r(F_{\epsilon}(g)/||\lambda||)dg
\geq c e^{\frac{2\langle \lambda, \rho \rangle - d (n-1-\frac{t}{2}) - \eta}{|\lambda|}  r}\Psi(1) - c (1+r)^{n-1} e^{2|\lambda|r} \Psi(1)
\end{equation}

 \noindent Recalling the upper bound \eqref{firstdelta} for the integral on the left side of the inequality directly above, to finish the argument, we need 
\[
\frac{2\langle \lambda, \rho \rangle - d (n-1-\frac{t}{2}) - \eta}{|\lambda|} > 2|\lambda|, 
\]
which recalling that $\lambda= \rho - \frac{t}{2} \theta$ is the same as 
\begin{equation} \label{eq-1714}
d(n-1-t/2) + \eta < t \langle \rho, \theta \rangle - \frac{t^2}{2} |\theta|^2. 
\end{equation}

For $SL(n,\mathbb R)$, the coordinate formulas for $\rho$
and $\theta$ give
\[
\langle\rho,\theta\rangle-\frac12\|\theta\|^2
=\frac{n(n-1)}8.
\]
Thus, if $d\le n/8$, then if $t$ were equal to $1$ and $\eta$ were equal to zero we would have that the difference of the right side of (\ref{eq-1714}) with the left side satisfies
\[
\langle\rho,\theta\rangle-\frac12\|\theta\|^2
-d\left(n-1-\frac12\right)
\ge
\frac{n(n-1)}8-\frac n8\left(n-\frac32\right)
=\frac n{16}>0.
\]
By continuity, we may choose $t<1$ sufficiently close to
$1$, and then $\eta>0$ sufficiently small, so that
\[
d(n-1-t/2)+\eta
<
t\langle\rho,\theta\rangle-\frac{t^2}{2}\|\theta\|^2.
\]
Recall that $\delta=1-t>0$, and fix $\epsilon>0$ small enough
for the uniform Hessian and monotonicity estimates to hold.
These choices depend only on $n$ and $d$.

Since the preceding inequality is equivalent to
\[
\frac{2\langle\rho,\lambda\rangle
-d(n-1-t/2)-\eta}{\|\lambda\|}
>2\|\lambda\|,
\]
the monotonicity term grows faster than the mean-zero
term, including its polynomial factor. We may therefore
choose $r$ large enough that
\[
c_\delta(1+r)^{n-1}e^{2\|\lambda\|r}
\le
\frac c2
e^{\left(
\frac{2\langle\rho,\lambda\rangle
-d(n-1-t/2)-\eta}{\|\lambda\|} \right) r }.
\]
Substituting into \eqref{eq-VolumeMonoDecayslnr} gives
\[
\frac{\mathcal H^k(S)^2}{\vol(\Gamma\bs G)}
\int_G
\chi_r\!\left(\frac{F_\epsilon(g)}{\|\lambda\|}\right)\,dg
\ge
\frac c2
\exp\!\left(
\frac{2\langle\rho,\lambda\rangle
-d(n-1-t/2)-\eta}{\|\lambda\|}\,r
\right)\Psi(1).
\]

For this fixed choice of $r$, the upper bound
\eqref{firstdelta} bounds the integral on the left by a constant independent of $\Gamma$ and $S$. Dividing
by this upper bound and absorbing the fixed exponential
factor into the constant, we obtain
\[
\frac{\mathcal H^k(S)^2}{\vol(\Gamma\bs G)}
\ge c\,\Psi(1)
\ge c'\mathcal H^k(S),
\]
where the last inequality follows from Lemma~6.10.
Dividing by $\mathcal H^k(S)>0$, we conclude that
\[
\mathcal H^k(S)\ge c'\vol(\Gamma\bs G).
\]
All parameters and constants were chosen independently
of $\Gamma$ and $S$. This proves the claim for every
integer $1\le d\le\lfloor n/8\rfloor$.

\subsection{Modifications for general split simple real higher rank groups $G$ }

We describe now the modifications for the general case.  As described at the end of section 5, we obtain a monotonicity formula with the quantity
\[
2\|\rho\|
- \frac{ d \max _{\alpha \in \Phi^+} \langle \rho, \alpha \rangle + \eta}{|\rho|}
\]
for the exponent of growth of the codimension-d stationary varifold, in place of the quantity 
\[
\frac{2\langle\rho,\lambda\rangle
-d(n-1-t/2)-\eta}{\|\lambda\|}
\]
that appeared in the argument above.

 We can also use Theorem \ref{thm-decayslnr} to control the rate of decay of matrix coefficients, where the rate of decay is given by a maximal orthogonal system $\theta$ for the group $G$. In place of (\ref{ineq:1886}), we would instead have: 
\begin{equation} \label{ineq:2226}
 \frac{\mathcal H^k(S)^2}{\vol(\Gamma\bs G)}\int_G\chi_r(F_{\epsilon}(g)/||\rho||)dg
\geq c e^{2\|\rho\|
- \left( \frac{d \max _{\alpha \in \Phi^+} \langle \rho, \alpha \rangle + \eta}{||\rho||}\right) r}\Psi(1) - c (1+r)^{\rank(\text{G})-1} e^{(\left(2 - \frac{t}{3 \rank(G)}\right)||\rho||r} \Psi(1), 
\end{equation}
\noindent where $t$ can be taken as close to $1$ and $\eta$ as close to $0$ as desired.  To obtain the second term on the right side we are using that $\theta(H) \geq \frac{1}{3\rank(\text{G})} \rho (H)$ for root systems of all types to bound $2\rho - t \theta$ in the step analogous to (\ref{ineq:meanzero}).  This can be checked using Section \ref{sec-decayhigherrank}.    

From (\ref{ineq:2226}), one can also verify using the information on the root systems from Section \ref{sec-decayhigherrank} and table \ref{tab:weyl_vector_norm} that there is a uniform constant $c>0$ so that for $d  \leq  c \left( \rank(\text{G})\right)$  the exponent in the first term of the right side of (\ref{ineq:2226}) is greater than the exponent in the second term of the right side of  (\ref{ineq:2226}) provided $t$ was chosen sufficiently close to 1.  



From here the argument can proceed as in the $SL(n,\mathbb{R})$ case above.  Finally, we can choose $c$ small enough that the statement of the theorem is vacuously true for the group of exceptional type.


        \appendix{} \label{sectiononsweepouts}
\section{Sweepouts}\label{sec-sweepouts}

In this section we collect some facts about sweepouts that are used in the main body of the paper.  First we define the space of mod-2 Lipschitz cycles in the flat norm.  Our presentation closely follows \cite{g09}. For a compact Riemannian manifold $M$, a mod-2 Lipschitz k-chain in $M$ is a sum $\sum_{i=1}^N a_i f_i$ where $a_i \in \mathbb{F}_2$ and each $f_i$ is a Lipschitz map from the standard k-simplex to $M$.  The space of all mod-2 Lipschitz k-chains is naturally a vector space over $\mathbb{F}_2$.  The boundary of a mod-2 Lipschitz chain is defined as in singular homology.  We denote the set of  mod-2 Lipschitz k-chains that lie in the kernel of the boundary map-- the space of mod-2 Lipschitz \textit{k-cycles}-- by $Z^*(M,k)$.  Since Lipschitz functions are differentiable almost everywhere, one can define the volume of an element of $Z^*(M,k)$ in the natural way. 

Let $Z_0^*(M,k)$ be the subset of $Z^*(M,k)$ consisting of cycles that are trivial as elements of singular homology.  We define the flat metric $d_{\mathbb{F}}$ on $Z_0^*(M,k)$ as follows.  For $a,b \in Z_0^*(M,k)$, let $\mathcal{C}$ be the collection of all pairs $(C_1,C_2)$ of  mod-2 Lipschitz $k$-chains and $k+1$-chains $C_1$ and $C_2$ so that $a-b= C_1 + \partial C_2$, and set 
\[
d_{\mathbb{F}}(a,b):= \inf_{(C_1,C_2)\in \mathcal{C}} \vol _k (C_1) + \vol_{k+1}(C_2).  
\]
\noindent The flat distance between two different cycles $a$ and $b$ can be zero, if for example one of them contains $k$-cycles of zero volume.  We quotient $Z_0^*(M,k)$ by the equivalence relation of being at flat distance zero to obtain a metric space whose completion we denote by $Z_0(M,k)$.  We define the volume of an element $c$ of  $Z_0(M,k)$ to be 
\[
\inf \lim_{m \to \infty } \inf \vol_k(c_m),
\]
\noindent where the infimum is taken over all sequences $\{c_m\}$ in $Z_0^*(M,k)$ converging to $c$.

A k-sweepout $F:X \rightarrow Z_0(M,k)$ is a map from a polyhedral complex $X$ that is continuous in the flat topology.  For each sweepout $F$ we define the volume $V(F)$ of $F$ to be $\sup_{x\in X} \vol(F(x))$.  Almgren showed that $H^{n-k}(Z_0(M,k);\mathbb{F}_2)$ contains a non-zero element, which we call $a(M,k)$, that corresponds to the fundamental class of $M$ \cite{almgren1962homotopy}.  We say that a sweepout $F: X \rightarrow Z_0(M,k)$ is nontrivial if it detects $a(M,k)$ in the sense that $F^*(a(M,k))$ is nonzero in $H^{n-k}(X;\mathbb{F}_2)$.  We define the \textit{k-waist} of $M$ to be the infimal volume over all nontrivial k-sweepouts with \textit{no concentration of mass}:  
\begin{equation} \label{noconcentrationofmass}
\lim_{r\downarrow 0}
\sup_{\substack{x\in X\\ p\in M}}
\|F(x)\|(B(p,r))=0
\end{equation}

The following theorem due to Almgren will be important for us.  Later Gromov gave a shorter proof of a similar statement \cite[pg. 134]{gromov1983filling} (see also \cite[Proposition 2.1]{g09}.)   

\begin{thm} \label{positivewaist}
	Let $M$ be a compact Riemannian manifold of dimension $n$.  Then the k-waist of $M$ for $0<k<n$ is positive.  
	\end{thm}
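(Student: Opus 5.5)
The statement to prove is Theorem \ref{positivewaist}: for a compact $n$-manifold $M$ and $0<k<n$, the $k$-waist is positive. The plan is Gromov's isoperimetric argument in the version of \cite{g09}. Argue by contradiction: suppose $F\colon X\to Z_0(M,k)$ is a nontrivial sweepout with no concentration of mass and with $V(F)$ very small. The goal is to build a second family $F'$ homotopic to $F$ through flat-continuous maps that sends every cycle to $0$. Then $F^*(a(M,k))=F'^*(a(M,k))=0$, contradicting nontriviality.

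\textbf{Step 1: local isoperimetric inequality.} Fix a fine triangulation of $M$ whose simplices are uniformly bi-Lipschitz to standard simplices. We need the Federer--Fleming deformation/isoperimetric lemma on $M$: there are constants $\epsilon_0,C$ depending only on $M$ such that any cycle $z\in Z_0(M,j)$ with $\vol(z)<\epsilon_0$ bounds a chain $y$ with $\vol(y)\le C\vol(z)^{(j+1)/j}$. Moreover $y$ can be chosen to depend in a controlled way on $z$: if two small cycles are flat-close, their fillings differ by a small chain plus a small cycle. For $j<n$ this holds because small cycles can be pushed into the $j$-skeleton, where they vanish or become small combinations of simplices. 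Since $z$ is homologically trivial, the top-dimensional case is handled as well.

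\textbf{Step 2: coning construction.} Subdivide $X$ finely, so that cycles at adjacent vertices are flat-close (by continuity) and have small mass. Define $F'$ skeleton by skeleton on $X\times[0,1]$.
\begin{itemize}
\item On each vertex $v$, connect $F(v)$ to $0$ via the cone over a filling chain of $F(v)$, which is small by Step 1.
\item On each edge, the difference of the two endpoint cycles is small in flat norm. Write it as $C_1+\partial C_2$, and fill the resulting small $(k+1)$-cycle built from the two vertex fillings and $C_2$, obtaining a $(k+2)$-chain.
\item Continue inductively, producing at the $i$-cell a filling of dimension $k+i+1$.
\end{itemize}
Each new cycle we need to fill is small because it is assembled from previous small fillings. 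Masses decrease superlinearly through the inequality $C\,\vol^{(j+1)/j}$, so they stay below $\epsilon_0$ at every stage.

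\textbf{Main obstacle.} The obstacle is making sure that, on each cell, the chains used near the boundary are actually small. Flat-closeness alone does not control mass; it is the hypothesis \eqref{noconcentrationofmass} that allows us to cut cycles into pieces lying in small balls. Inside a small ball, the isoperimetric fillings are uniformly small and local, and the coning can be performed ball by ball.

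The closing dimension step is where nontriviality is used. When $i=n-k$, the chain we would need to fill is an $n$-cycle: a multiple of $[M]$ mod 2, of small mass. Small mass forces it to be $0$ rather than $[M]$. This is exactly the statement that the obstruction class $a(M,k)$, evaluated on the corresponding cycle of $X$, vanishes. So $F^*(a(M,k))=0$, a contradiction. Consequently the waist is bounded below by a constant depending only on $\epsilon_0$ and $C$, and is in particular positive.
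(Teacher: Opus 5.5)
This is essentially the argument the paper cites rather than reproves (Gromov \cite[pg. 134]{gromov1983filling}, \cite[Proposition 2.1]{g09}). What you actually build is a complex of cycles on $X\times[0,1]$ extending $\mathcal{C}_F$ by small isoperimetric fillings, not a flat-continuous homotopy. The small-mass $n$-cycles that appear at the $(n-k)$-cells are null-homologous, so the map $H_{n-k}(X;\mathbb{F}_2)\to H_n(M;\mathbb{F}_2)$ of Proposition \ref{prop:nontrivialitycriterion} vanishes, which is how you correctly conclude at the end.

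Your ``main obstacle'' paragraph is a misdiagnosis, and the no-concentration-of-mass hypothesis plays no role. On an edge, write $F(v_1)'-F(v_2)'=C_1+\partial C_2$ and add a small filling of the cycle $C_1$; this already gives a small connecting chain. Your edge step omits that filling of $C_1$, so the chain you propose to fill there is not yet a cycle. The vertex cycles themselves are small simply because $V(F)$ is.
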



\subsection{} 
We now describe a useful criterion for a sweepout to be nontrivial.  To do this we first need to introduce the notion of a complex of cycles, which will serve as a discrete approximation to a sweepout.  A complex of cycles $\mathcal{C}$ is parametrized by a polyhedral complex $X$.  It associates to each $i$-face $\Delta$ of $X$ an $i+k$ chain $\mathcal{C}(\Delta)$ in $M$.  These chains must satisfy the following compatibility condition: if $A$ is an $i$-face in $X$ with $\partial A= \sum B_\ell$, then $\partial \mathcal{C}(A)= \sum \mathcal{C}(B_{\ell})$.  Note that this implies that vertices of $X$ are sent to cycles.    This also implies that a sum of $i$-dimensional faces in $X$ that adds up to a cycle gives an $i+k$ cycle in $M$, and so we get a map $H_i(X;\mathbb{F}_2) \rightarrow H_{i+k} (M;\mathbb{F}_2)$.  The case $i=n-k$ is the one that will matter for us.  

We describe how to build a complex of cycles $\mathcal{C}_F$ from a sweepout $F$.  Let $F:X \rightarrow Z_0(M,k)$  be a sweepout, and for some $\delta>0$ to be specified later choose a subdivision of the polyhedral complex structure on $X$ so that adjacent vertices $v_1$ and $v_2$ satisfy $d_{\mathbb{F}}(F(v_1)',F(v_2)')<3\delta$, for $F(v_1)'$ and $F(v_2)'$ Lipschitz k-chains in $Z_0(M,k)$ satisfying $d_{\mathbb{F}}(F(v_i),F(v_i)')<\delta$ and $\vol_k(F(v_i)') < \vol_k(F(v_i))+ \delta$.  (Recall that $Z_0(M,k)$ is the completion of the space of null-homologous Lipschitz k-chains relative to the flat metric.)  

For a vertex $v$ we define $\mathcal{C}_F(v)$ to be $F(v)'$.  For an edge $E$ joining vertices $v_1$ and $v_2$, we define $\mathcal{C}_F(E)$ to be a chain $C_E$ with $\partial C_E = F(v_1)' - F(v_2)'$ and $\vol_{k+1}(C_E) <5 \delta$, where to find $C_E$ we have used the Federer-Fleming isoperimetric inequality (see \cite[Section 3.4]{gromov1983filling})  Continuing in this way and assuming that we have defined $\mathcal{C}_F$ on all $i-1$-faces, for each i-face $A$ of $X$ we define $\mathcal{C}_F(A)$ to be a chain $C_A$ with boundary equal to 
\[
\sum_{B \in \partial A} \mathcal{C}_F(B)
\]
\noindent and $\vol_{i+k}(C_A) < C(n) \delta$, where we have again used the Federer-Fleming isoperimetric inequality.  

Having defined $\mathcal{C}_F$, we obtain a map $H_{n-k}(X;\mathbb{F}_2) \rightarrow H_{n} (M;\mathbb{F}_2)$.  Provided we choose $\delta$ small enough, this map will be well defined independent of the complex of cycles we chose \cite{g09}.  The following criterion for $F$ to be a nontrivial sweepout makes precise the sense in which $a(M,k)\in H^{n-k}(Z_0(M,k);\mathbb{F}_2)$ corresponds to the fundamental class of $M$.  

\begin{prop} \label{prop:nontrivialitycriterion}
The sweepout $F:X \rightarrow Z_0(M,k)$ is nontrivial exactly if the image of the map $H_{n-k}(X;\mathbb{F}_2) \rightarrow H_{n} (M;\mathbb{F}_2) \cong \mathbb{F}_2$ obtained from $\mathcal{C}_F$ is nontrivial.  
\end{prop}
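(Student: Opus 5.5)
The plan is to reduce the statement to Almgren's description of $a(M,k)$ through the ``gluing'' homomorphism, following the presentation in \cite{g09}.

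First, define a homomorphism
\[
\mathrm{Alm}\colon H_{n-k}(Z_0(M,k);\mathbb{F}_2)\to H_n(M;\mathbb{F}_2).
\]
Represent a class by a singular cycle $\sigma=\sum \sigma_j$, with $\sigma_j\colon\Delta^{n-k}\to Z_0(M,k)$. Subdivide finely, so that adjacent vertices have flat distance $<3\delta$. Then build a complex of cycles exactly as in the construction of $\mathcal{C}_F$ above: approximate vertex cycles by Lipschitz cycles, and fill in the faces inductively using the Federer--Fleming isoperimetric inequality. Since $\sigma$ is a cycle, the top-dimensional $(n-k)$-faces glue to an $n$-cycle in $M$. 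Let $\mathrm{Alm}[\sigma]$ be its class.

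Next, check that $\mathrm{Alm}$ is well defined. Two choices, whether of approximations, fillings, subdivisions, or homologous representatives, can be connected by a complex of cycles on $X\times[0,1]$ built the same way. All cells not coming from the original top faces have mass at most $C(n)\delta$. Hence the two resulting $n$-cycles differ by a boundary plus an $n$-cycle of volume $<C(n)\delta$. Once $C(n)\delta<\vol(M)$, such a small $n$-cycle is null-homologous mod $2$: an $n$-cycle mod $2$ is a multiple of $[M]$ with mass at least $\vol(M)$ unless it is zero. This is precisely the independence statement quoted from \cite{g09}.

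Then identify $a(M,k)$ with $\mathrm{Alm}$. Almgren's theorem \cite{almgren1962homotopy} says that $\mathrm{Alm}$ is nonzero, and indeed an isomorphism on $\pi_{n-k}$. Over $\mathbb{F}_2$, universal coefficients give $H^{n-k}=\Hom(H_{n-k},\mathbb{F}_2)$, and the fundamental cohomology class $a(M,k)$ is the functional $\mathrm{Alm}$ composed with $H_n(M;\mathbb{F}_2)\cong\mathbb{F}_2$. I expect this identification to be the main obstacle, since it depends on how $a(M,k)$ is defined. I would take it as the definition, citing Almgren and \cite{g09} for the fact that it agrees with the class detecting the fundamental class. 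The alternative is to verify directly that the standard sweepout of $M$ by level sets of a Morse function has nonzero pairing with it.

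Finally, use naturality. $F^*a(M,k)\neq0$ in $H^{n-k}(X;\mathbb{F}_2)$ iff some $\tau\in H_{n-k}(X;\mathbb{F}_2)$ has $\langle a(M,k),F_*\tau\rangle\neq0$, that is, $\mathrm{Alm}(F_*\tau)\neq0$. Restricting $\mathcal{C}_F$ to a cellular cycle representing $\tau$ gives a complex of cycles for $F_*\tau$ of the type used to define $\mathrm{Alm}$. By well-definedness, $\mathrm{Alm}(F_*\tau)$ is the image of $\tau$ under the map $H_{n-k}(X)\to H_n(M)$ induced by $\mathcal{C}_F$. This gives the equivalence.
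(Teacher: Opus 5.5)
Your proposal is correct and follows the paper's route. The paper gives no separate argument: it treats the statement as the precise meaning of Almgren's identification of $a(M,k)$ with the fundamental class through the gluing construction of \cite{g09}, and cites that reference for well-definedness. Your write-up makes that cited argument explicit: the gluing homomorphism on $H_{n-k}(Z_0(M,k);\mathbb{F}_2)$, independence of choices via small fillings over $X\times[0,1]$, $a(M,k)$ as the dual functional, and naturality $\langle F^*a(M,k),\tau\rangle=\langle a(M,k),F_*\tau\rangle$.
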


\subsection{}

We now explain why generic generalized piecewise linear maps to lower-dimensional Euclidean spaces give sweepouts.  Let $M$ be a compact smooth  manifold of dimension $n$, and let $\Phi :M \rightarrow \mathbb{R}^d$ be a generalized piecewise linear map for $d<n$ for some triangulation of $M$.  We also require $\Phi$ to be generic, which recall means that it maps any $d+1$ vertices of a $d$-dimensional face of the triangulation of $M$ to a set of points in $\mathbb{R}^d$ that are not all contained in an affine hyperplane.  This can be arranged by an arbitrarily small perturbation of $\Phi$, by perturbing the images of the vertices and extending piecewise linearly.  Because $\Phi$ is generic, the preimage $\Phi^{-1}(p)$ of any point $p \in \mathbb{R}^d$ is a mod-2 Lipschitz $(n-d)$-cycle.  We see that it is a null-homologous cycle, and thus an element of $Z_0(M, n-d)$, by noting that the image of $\Phi$ is contained in a compact set, choosing a path joining $p$ to any point outside of this compact set, and taking the preimage under $\Phi$.  In more detail, one can choose the path $\gamma$ so that it intersects the positive-codimension faces of the triangulation of $\mathbb{R}^d$ in a finite set of points $p_1,..,p_l$. Since $\Phi$ defines a fiber bundle over the sub-interval of $\gamma$ between $p_i$ and $p_{i+1}$, it is clear that the preimages of points on this sub-interval are homologous.  One can also check that the preimage of a sub-interval $I$ of $\gamma$ containing one of the $p_i$ defines a  chain that bounds the preimages of the two endpoints of $I$.  Therefore all fibers are null-homologous.

The map $\Phi$ thus defines a continuous map $F_\Phi: \mathbb S^d \cong \mathbb{R}^d \cup \infty \rightarrow Z_0(M, n-d)$, where $F_{\Phi}$ sends the point at infinity to the empty cycle (as it does every point outside of a compact set containing the image of $\Phi$.)  We now prove Proposition \ref{prop:sweepoutappendix}.  

\begin{prop} \label{prop:sweepoutappendix} 
	For a generic generalized piecewise linear map $\Psi:M \rightarrow \mathbb{R}^d$, the map $F_{\Psi}$ is a nontrivial sweepout.  
	\end{prop}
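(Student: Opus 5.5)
By Proposition \ref{prop:nontrivialitycriterion}, it suffices to build a complex of cycles $\mathcal{C}$ approximating $F_\Psi$ and show that the induced map $H_d(\mathbb S^d;\mathbb F_2)\to H_n(M;\mathbb F_2)$ sends the fundamental class of $\mathbb S^d$ to $[M]$. Here $n-k=d$. I would not use the abstract isoperimetric construction of $\mathcal{C}_F$. Instead I would take a \emph{geometric} complex of cycles built from preimages, and then argue it is one of the admissible complexes $\mathcal C_F$. By the independence statement preceding Proposition \ref{prop:nontrivialitycriterion}, it then computes the same map.

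\textbf{Step 1: the complex.} Choose a compact convex polytope $P\subset\mathbb R^d$ containing $\Psi(M)$ in its interior. Choose a fine triangulation $\mathcal Q$ of $\mathbb S^d=\mathbb R^d\cup\infty$ in which $\infty$ is a vertex and every face not containing $\infty$ lies in $\mathbb R^d$. Put $\mathcal Q$ in general position with respect to the finitely many affine images of simplices of the triangulation of $M$. For a face $\Delta$ not containing $\infty$, set $\mathcal C(\Delta)=\Psi^{-1}(\Delta)$. On each simplex $\sigma$ of $M$ this is the preimage of a convex polytope under an affine map composed with the $C^1$ chart $\Phi|_\sigma$, so it is a Lipschitz $(\dim\Delta+n-d)$-chain mod 2. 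For faces containing $\infty$, set $\mathcal C(\Delta)=\Psi^{-1}(\Delta\cap\mathbb R^d)$, which equals $\Psi^{-1}$ of a cone region outside or meeting $P$.

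Genericity together with general position of $\mathcal Q$ gives $\partial\Psi^{-1}(\Delta)=\Psi^{-1}(\partial\Delta)$ mod 2. This holds because fibers over the interiors of faces are transverse, and codimension-one pieces over interior points of the chart boundaries cancel in pairs, since $M$ has no boundary. So the compatibility condition holds.

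\textbf{Step 2: $\mathcal C$ is a legitimate $\mathcal C_F$.} Vertices go to fibers $F_\Psi(v)$ exactly. For each face $\Delta$, the mass of $\mathcal C(\Delta)$ is bounded by $C\cdot\mathrm{diam}(\Delta)$ times a bound on fiber volumes. This uses the coarea formula on each simplex, where $\Psi$ is affine with rank $d$ on top simplices by genericity. Hence, after refining $\mathcal Q$, each chain has volume below $C(n)\delta$, as the construction requires.

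One must also check that adjacent vertices are flat-close. That follows from the same estimate applied to edges. Continuity of $F_\Psi$ in the flat topology follows likewise. This refinement and mass estimate is the main technical point. The obstacle is getting uniform bounds near lower-dimensional simplices of $M$, where the rank of $\Psi$ drops. I would handle it by working simplex-by-simplex: the preimage of a small region intersected with a fixed $n$-simplex is a polytope slab, whose volume is linear in the diameter of the region.

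\textbf{Step 3: computing the class.} The fundamental cycle of $\mathbb S^d$ is the sum of all $d$-faces of $\mathcal Q$. Its image under $\mathcal C$ is $\sum_\Delta\Psi^{-1}(\Delta)$. Since the $d$-faces cover $\mathbb S^d$ with overlaps only of measure zero, this sum is $\Psi^{-1}(\mathbb R^d)=M$ as a mod 2 chain, namely the fundamental cycle. Its class is therefore $[M]\neq0$ in $H_n(M;\mathbb F_2)$. Proposition \ref{prop:nontrivialitycriterion} then gives nontriviality.

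Finally, no concentration of mass \eqref{noconcentrationofmass} follows from the same slab estimate. Any fiber meets $B(p,r)$ in volume $O(r^{n-d})$, uniformly over the finitely many simplices.
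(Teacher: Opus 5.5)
Your proposal is correct and follows essentially the same route as the paper. Both build a complex of cycles from the simplexwise preimages $\Psi^{-1}(\Delta)$ of a fine subdivision of $\mathbb R^d$ in general position with respect to the polytopes $\Psi(\sigma)$, get compatibility by cancellation across shared faces of the triangulation of $M$, note that these chains are small for small cells and so qualify as fillings in the $\mathcal C_F$ construction, and observe that the chains on the top cells sum to the mod-2 fundamental cycle of $M$.

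The only difference is cosmetic: you make $\infty$ a vertex of a triangulation of $\mathbb S^d$, while the paper uses a large cube $B$ with $\Psi^{-1}(\partial B)=\emptyset$. On that point, choose the star of $\infty$ to miss $P$, rather than allowing cone regions ``meeting $P$''. Then the chains on faces containing $\infty$ are empty, and your diameter-based mass bound is only applied to bounded cells.
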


    \begin{proof}
Choose a large cube $B \subset \mathbb{R}^d$ with $\Psi(M) \subset \int B$.  Subdivide $B$ into small (subject to a condition to be specified momentarily) polyhedral cells in general position with respect to affine images of the simplices in the triangulation of $M$, so that in particular the vertices are disjoint from $\Psi(M^{(d-1)}).$   

For a $j$-cell $A$, let $C(A)$ be the mod-2 chain obtained by taking $\Psi^{-1}(A)$ simplexwise, which has dimension $n-d+j$.  Boundary contributions along common faces of the triangulation of $M$ cancel giving 
\[
\partial C(A)
=
\sum_{\substack{A'\subset\partial A\\ \dim A'=j-1}}
C(A').
\]

At vertices, $C$ agrees with $F_{\Psi}$.  If the target cells have diameter small enough, then the $C(A)$ can be taken small enough to serve as the fillings in the complex of cycles construction above.  Since the inverse image of $\partial B$ is empty, we can take the parameter space to be the one-point compactification $S^d$ of $\mathbb{R}^d$.  The sum of the chains assigned to the d-cells is the mod-2 fundamental cycle of $M$, and Proposition \ref{prop:nontrivialitycriterion} therefore implies that $F_{\Psi}$ is non-trivial.  

    \end{proof}

The following theorem is a consequence of \cite[Theorem 4.10]{pittsbook} and the regularity theory for almost minimizing varifolds contained in \cite{pittsbook}. Pitts's theorem is stated in the setting of discrete sweepouts, but by the interpolation theorem  (see  \cite{liokumovich2018weyl}[Section 2.9]) a flat continuous family with no concentration of mass can be converted into an Almgren–Pitts (d,M)-homotopy class  to which \cite[Theorem 4.10]{pittsbook} applies, whose width is at most the maximal area of a cycle in the original sweepout.  That the sweepouts $F_\Psi$ we obtain from generic piecewise-linear maps $\Psi$ have no concentration of mass (\ref{noconcentrationofmass}) is routine to verify.


\begin{thm}
    There is a stationary integral rectifiable k-varifold in $M$ whose $k$-dimensional volume is a lower bound for the k-waist of $M$.   
\end{thm}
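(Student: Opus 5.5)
We will prove the statement by running Almgren--Pitts min-max on the class of nontrivial sweepouts and comparing with the waist. The plan is as follows.

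Let $\Pi$ be the collection of nontrivial $k$-sweepouts $F\colon X\to Z_0(M,k)$ with no concentration of mass (\ref{noconcentrationofmass}), i.e. those detecting $a(M,k)$. Define the width
\[
\omega:=\inf_{F\in\Pi}\sup_{x\in X}\vol(F(x)).
\]
By definition, $\omega$ equals the $k$-waist in the sweepout sense. By Theorem \ref{positivewaist}, $\omega>0$.

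\textbf{Step 1: pass to discrete homotopy classes.} Fix $F\in\Pi$ with $\sup_x \vol(F(x))<\omega+\eta$. Using the no-concentration-of-mass condition, the interpolation theorem of \cite{liokumovich2018weyl} converts $F$ into a discrete Almgren--Pitts sequence of maps. These are defined on finer and finer subdivisions of $X$, are fine in the mass norm, and have maximal mass at most $\sup_x\vol(F(x))+\eta$. The resulting $(X,\mathbf M)$-homotopy class $\Pi_F$ remembers the detection of $a(M,k)$. One checks this with Proposition \ref{prop:nontrivialitycriterion}: the discrete maps and $F$ produce homologous complexes of cycles once $\delta$ is small, because the isoperimetric fillings used in building $\mathcal C_F$ are small. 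Hence every element of $\Pi_F$, re-interpolated back to a continuous sweepout, is again nontrivial. Consequently the width $\mathbf L(\Pi_F)$ satisfies
\[
\omega\le \mathbf L(\Pi_F)\le \omega+2\eta.
\]
In particular $\mathbf L(\Pi_F)>0$.

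\textbf{Step 2: min-max.} Apply \cite[Theorem 4.10]{pittsbook} to $\Pi_F$. Its positivity hypothesis is supplied by Step 1. The theorem, together with the tightening procedure and the regularity theory for almost minimizing varifolds in \cite{pittsbook}, yields a stationary integral rectifiable $k$-varifold $V_\eta$ with $\|V_\eta\|(M)=\mathbf L(\Pi_F)$. This mass lies in $[\omega,\omega+2\eta]$.

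\textbf{Step 3: let $\eta\to0$.} The masses of the $V_\eta$ are uniformly bounded. By Allard compactness for stationary integral varifolds, a subsequence converges to a stationary integral varifold $V$ with mass $\omega$. The limit is non-empty, since the mass is $\omega>0$; alternatively, Lemma \ref{lem-smallballmono}-type monotonicity gives a uniform lower bound on nonzero masses. Integrality of the limit also uses that the mass is bounded away from zero. Thus $\mathcal H^k(V)=\omega$.

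\textbf{Step 4: connect to the waist of Definition \ref{defn-expansion}.} For any generic generalized PL map $\Psi$, Proposition \ref{prop:sweepoutappendix} shows that $F_\Psi\in\Pi$. Therefore
\[
\omega\le \sup_p \mathcal H^k(\Psi^{-1}(p))=w(\Psi).
\]
Taking the infimum over $\Psi$ gives $\mathcal H^k(V)=\omega\le w_d(M)$ with $k=n-d$. This proves the statement and Proposition \ref{prop-waistsweepout}.

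The main obstacle is Step 1. One must show that the interpolation preserves nontriviality, so that discrete min-max over $\Pi_F$ is not trivial and its width is still at least $\omega$. Closely related is checking that the sweepouts $F_\Psi$ really have no concentration of mass. I would handle the latter by noting that fibers of a PL map restricted to small balls have mass bounded by $Cr^{k}$, uniformly in the fiber. This holds because each simplex contributes an affine slice whose mass is controlled by the bi-Lipschitz constants of $\Phi$ on that simplex.
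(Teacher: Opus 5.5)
Your proposal follows the paper's route: you interpolate a nontrivial sweepout with no concentration of mass into an Almgren--Pitts homotopy class and apply Pitts' Theorem 4.10 with its regularity theory. Your added steps fill in details the paper leaves implicit, namely positivity of the width via nontriviality of the class together with Theorem \ref{positivewaist}, and the Allard-compactness passage $\eta\to0$ that yields a single nonzero varifold lying below the infimum. The only slip is minor: integrality of the limit comes from Allard's compactness theorem under uniform mass and first-variation bounds alone, and the lower bound $\omega>0$ is needed only to ensure the limit is non-empty.
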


Proposition \ref{prop-waistsweepout} from the main body of the text, which says that the $d$-waist is bounded below by the $n-d$-dimensional Hausdorff measure of a stationary integral rectifiable varifold is a direct consequence of the previous theorem and Proposition \ref{prop:sweepoutappendix}.

\bibliography{bibliography}
\bibliographystyle{alpha}

	\end{document}